\newcommand{\vin}{\rotatebox[origin=c]{90}{$\in$}}
\newcommand{\C}{{\mathbb C} }
\newcommand{\R}{{\mathbb R} }
\newcommand{\cA}{{\mathcal A} }
\newcommand{\cE}{{\mathcal E} }
\newcommand{\cM}{{\mathcal M} }
\newcommand{\cO}{{\mathcal O} }
\newcommand{\cT}{{\mathcal T} }
\newcommand{\cX}{{\mathcal X} }
\newcommand{\cZ}{{\mathcal Z} }
\newcommand{\cK}{{\mathcal K} }
\newcommand{\wh}{\widehat}
\newcommand{\wt}{\widetilde}
\newcommand{\pt}{\partial}
\def\ol#1{{\overline{#1}}}
\newtheorem*{Maintheorem*}{Main Theorem}
\newtheorem*{theorem*}{Theorem}
\newtheorem*{mainlemma}{Main Lemma}
\newtheorem{theorem}{Theorem}
\newtheorem{definition}{Definition}
\newtheorem{lemma}{Lemma}
\newtheorem{remark}{Remark}
\newtheorem*{remark*}{Remark}
\newtheorem{proposition}{Proposition}
\newtheorem{corollary}{Corollary}
\newtheorem*{observation*}{Observation}
\newtheorem*{notation*}{Notation}
\newtheorem*{assumption*}{General assumption}
\newtheorem*{empty*}{}
\def\ke{K{\"a}h\-ler-Ein\-stein }
\def\ks{Ko\-dai\-ra-Spen\-cer }
\def\ka{K{\"a}h\-ler }
\def\wp{Weil-Pe\-ters\-son }
\def\tei{Teich\-mül\-ler }
\def\ma{Monge-Am\-père }
\def\ii{\sqrt{-1}}
\def\idb{\sqrt{-1}\partial\overline{\partial} }
\def\C{\mathbb{C}}
\def\cinf{C^\infty}
\def\gab{{g_{\alpha\ol\beta}}}
\def\db{{\ol\partial}}
\def\psh{plurisubharmonic}
\def\RP{$R^{n-p}f_*\Omega^p_{\cX/S}(\cK_{\cX/S})$\ }
\def\we{\wedge}
\def\isom{\stackrel{\sim}{\longrightarrow}}
\newcommand*\bigcdot{\mathpalette\bigcdot@{.5}}
\newcommand*\bigcdot@[2]{\mathbin{\vcenter{\hbox{\scalebox{#2}{$\m@th#1\bullet$}}}}}
\title[Moduli of canonically polarized manifolds]{Moduli of canonically polarized manifolds, higher order Kodaira-Spencer maps, and an analogy to Calabi-Yau manifolds}
\author[G.~Schumacher]{Georg Schumacher}
\address{Fachbereich Mathematik und Informatik,
Philipps-Universit\"at Marburg, Lahnberge, Hans-Meerwein-Straße, D-35032
Marburg, Germany}
\email{schumac@mathematik.uni-marburg.de}
\begin{document}

\begin{abstract}
Yau's solution of the Calabi conjecture made a differential geometric study of moduli spaces possible. The \wp metric, which is a \ka metric for moduli of canonically polarized manifolds, and for polarized Calabi-Yau manifolds, reflects the variation of the \ke metrics in a holomorphic family. Incidentally the existence of \ke metrics implies an analytic proof for the existence of the corresponding moduli spaces. In order to show that the moduli stack of canonically polarized manifolds is hyperbolic, one has to consider higher order \ks maps. We compute the curvature of the related twisted Hodge sheaves $R^{n-p}f_*\Omega^p_{\cX/S}(\cK_{\cX/S})$ for holomorphic families $f:\cX \to S$. The result exhibits a formal analogy to the classical curvature formula for Hodge bundles for families of  Calabi-Yau manifolds. We construct a Finsler metric of negative curvature on the moduli stack of canonically polarized manifolds, whose curvature is bounded from above by a negative constant. An extra argument together with Demailly's version of the Ahlfors Lemma are needed for those points, where the twisted Hodge sheaves are not locally free.
\end{abstract}

\maketitle

\section{Introduction}
A differential geometric study of moduli spaces was based upon Yau's solution of the Calabi conjecture.  The existence of \ke metrics on canonically polarized and (polarized) Calabi-Yau manifolds permitted a generalization of the classical \wp metric to moduli spaces of manifolds of higher dimension. Incidentally, in the analytic category, \ke metrics allow a direct argument for the existence of moduli spaces.

Our main result states that the moduli space of canonically polarized complex manifolds is Kobayashi hyperbolic.

For Riemann surfaces the \wp metric has been studied extensively, and the hyperbolicity of \tei space was understood from various viewpoints. One way was the identification of the \tei metric and the Kobayashi metric by Royden \cite{royden}. Another is the realization of \tei space as a bounded domain. The curvature of the \wp metric was computed by Wolpert \cite{wo} and Tromba \cite{tr}. In particular, the holomorphic sectional curvature turned out to be bounded by a negative constant, thus also implying hyperbolicity.  At this point one could see that the Weil-Petersson metric satisfies a curvature condition that is stronger than negativity of the sectional curvature (cf.\cite{sch:harm}) --– an even stronger property was later shown by Liu, Sun and Yau in \cite{lsy:teich}. The moduli spaces of Riemann surfaces are not hyperbolic, yet hyperbolic in the orbifold sense.

In higher dimensions the curvature of the generalized \wp metric for families of canonically polarized manifolds was computed by Siu in \cite{siu:canlift}, and a formula only in terms of harmonic \ks forms  was given in \cite{sch:curv}. As opposed to the classical case of families of compact Riemann surfaces a potentially positive further term occurred. In view of the result of Viehweg and Zuo \cite{v-z} on the Brody hyperbolicity of the moduli stack it became apparent that higher cohomology groups had to be included.

In \cite{sch-preprint08} (cf.\ \cite{sch:curv}) we introduced higher order \ks maps for deformations of a compact \ka manifold $X$  equipped with a \ke metric of constant negative Ricci curvature. These are notably different from those maps that arise related to obstruction theory and Massey products.  The maps have values in the spaces $H^p(X,\Lambda^p_X\cT_X)$, which carry natural $L^2$-metrics. The idea was to offset unwanted (possibly positive) terms by negative contributions from the next higher order \ks map. (The highest last
order term was always negative though.) Being defined on the symmetric powers of the tangent spaces of the base, these were used in \cite{sch-preprint08} to define a convex sum of metrics, which amounts to a Finsler metric on the base spaces of universal deformations thus descending to the moduli space in the orbifold sense.

In \cite{sch-preprint10}, for a family $f: \cX \to S$ we computed the curvature of the twisted Hodge bundles \strut \hfill \strut  $R^{n-p}f_*\Omega^p_{\cX/S}(\cK_{\cX/S})$ (cf.\ Theorem~\ref{th:curvgen} below):

\begin{gather*}
\hspace{-5cm}
R(A,\ol A,\psi,\ol\psi)=\int_{X}
\left(\left( \Box  + 1 \right)^{-1}(A\cdot \ol A)\right) \cdot(\psi \cdot \ol
\psi) g\/ dV \\ \hspace{2cm}
 \quad +  \int_{X} \left(\left( \Box  + 1 \right)^{-1} (A\cup\psi)\right)
\cdot (\ol A \cup \ol\psi) g\/ dV
 \quad +  \int_{X} \left(\left( \Box  - 1 \right)^{-1}
(A\cup\ol\psi)\right)\cdot (\ol A \cup \psi) g\/ dV.
\end{gather*}

There are technical reasons to consider these sheaves rather than the dual sheaves $R^pf_*\Lambda^p\cT_{\cX/S}$. However, the remarkable fact is that the curvature satisfies an estimate of the form
$$
(*) \qquad R(A,\ol A, \psi,\ol \psi)\geq \|H(A\cup \psi)\|^2 - \|H(\ol A \cup \psi )\|^2
$$
(cf.\ Corollary~\ref{co:curvest}).

One can see that the right-hand side of the above estimate $(*)$ is formally equal to the curvature for families of polarized Calabi-Yau manifolds being induced from the period map domain by the Torelli map.

The same methods also yield the strict positivity of the relative canonical bundle for effectively parameterized families (cf.\ \cite{sch-preprint08,sch-inv}).

In \cite{to-yeung} To and Yeung resumed our approach of higher \ks mappings and Finsler metrics. They are able to apply a somewhat technical curvature formula for the above bundles, which would also lead to our curvature formula. New is \cite[Lemma 13 (ii)]{to-yeung}, where they apply the Cauchy-Schwarz inequality to the second term of $(*)$. This inequality plugged into the curvature of the Finsler metric yields an upper strictly negative estimate for the curvature, whereas in \cite{sch-preprint10} we had to restrict ourselves to relatively compact subspaces of the moduli space.

In order to be precise, it must be assumed for all of these arguments that the above direct image sheaves are locally free, a point that was left open in \cite{to-yeung} so that hyperbolicity only follows for the subspaces, where the dimension of all induced cohomology groups $H^p(X,\Lambda^p\cT_X)$ is constant.
Let $\cM_N\subsetneq\ldots \subsetneq\cM_{j+1}\subsetneq \cM_{j}\subsetneq\ldots\subsetneq \cM$ be the corresponding stratification of the moduli space $\cM$, where all direct cohomology groups are of constant dimension on the spaces $\cM_j\backslash\cM_{j+1}$.

This difficulty can be overcome (cf.\ \cite{sch-inv,sch-CR})  by estimating the metric along curves that traverse the strata. A way to describe this approach is to say that eventually any such stratum $\cM_j$ is hyperbolic modulo $\cM_{j+1}$, and moreover $\mathcal M_j\backslash \mathcal M_{j+1} \subset \mathcal M_j$ is hyperbolically embedded, which yields hyperbolicity on the whole.

\section{Analytic structure of the moduli space}
Since the work of Mumford moduli of canonically polarized complex varieties have been studied in Algebraic Geometry. An analytic approach to the moduli space of canonically polarized manifolds can be based upon the existence and uniqueness of \ke metrics according to Yau's theorem. This approach is most suitable, if a differential geometric study is intended.

\subsection*{\ke metrics}
Let $X$ be a compact canonically polarized complex manifold of dimension $n$.
By definition the canonical line bundle $\cK_X$ possesses a hermitian metric $\wt h$, whose curvature form $\wt \omega_X = - \ii \pt \ol \pt (\log \wt h )$ is a \ka form. The inverse $\wt h^{-1}$  is interpreted as a volume form $\wt g\, dV$ on $X$ whose curvature is the Ricci form $Ric(\wt g)$.

Yau's theorem \cite{yau} states the existence of  a unique \ke form $\omega_X$ of constant Ricci curvature equal to $-1$. This statement is equivalent to the existence of a unique solution of a \ma equation. Namely, the volume form of $\wt\omega_X$ differs from the given form $\wt g\, dV$ by a differentiable factor
$$
\wt \omega_X^n= e^f \wt g\, dV.
$$
(We use the convention $\eta^k= \eta\we\ldots\we \eta/k!$ for any differential form $\eta$.)
The \ke form $\omega_X$ is cohomologous to $\wt \omega_X$ so that $\omega_X= \wt\omega_X + \ii \pt\ol\pt \varphi$ for a differentiable real valued function $\varphi$ on $X$, and the \ma equation reads
$$
(\wt \omega_X + \ii\pt\ol\pt\varphi)^n = e^{f+ \varphi} \wt g\, dV  \text{\quad i.e. \quad}(\wt \omega_X + \ii\pt\ol\pt\varphi)^n = e^\varphi \wt \omega^n_X
$$
which yields the \ke condition
$$
-Ric(\omega_X)= \omega_X.
$$
Concerning holomorphic families, we fix the notation first. Holomorphic local  coordinates on the given manifold $X$ are denoted by $(z^1,\ldots,z^n)$, and the \ke form on $X$ is written as
$$
\omega_X= \ii g_{\alpha\ol \beta} dz^\alpha\we dz^{\ol \beta}.
$$
We set
$$
g= \det(g_{\alpha\ol \beta})
$$
and denote by $dV$ the Euclidean volume element with respect to these coordinates
so that
$$
\omega_X^n = g \, dV.
$$
A holomorphic family of polarized manifolds $\{\cX_s\}_{S \in S}$ is given by a proper, holomorphic submersion $f: \cX \to S$, with fibers $f^{-1}(s) =\cX_s $ for $s\in S$ -- i.e.\ locally the map is a projection $U\times W \to W$, where $U\subset \C^n$ and $s\in W\subset S$ are open subsets. In general $S$ stands for a not necessarily reduced complex space, which will assumed to be reduced, when dealing with moduli spaces.

We will use the summation convention and denote covariant differentiation on the fibers $(\cX_s,\omega_s)$ with respect to a holomorphic coordinate $z^\alpha$, $\alpha=1,\ldots n$, by $\nabla_\alpha$ or rather use the semi-colon notation $\mbox{\textvisiblespace}_{;\alpha}$. For any tensor $\eta$, say $\eta_\alpha$ we set $\ol\eta_{\ol \beta}:= \ol{\eta_{\beta}}$. If necessary, we use the notation $\pt_\alpha$ or $\mbox{\textvisiblespace}_{|\alpha}$ for ordinary differentiation with respect to $z^\alpha$. Also we will write $\pt_\alpha$ for the coordinate vector field $\pt/\pt z^\alpha$.

Yau's openness argument for the existence of a \ke metric implies the following fact (cf.\ \cite{f-s}).

\begin{proposition}\label{pr:relka}
  Let $\cX \to S$ be a holomorphic family of canonically polarized compact, complex manifolds, where $S$ denotes a reduced complex space. Then the family $\omega_{\cX_s}$ of \ke forms with Ricci curvature equal to $-1$ depends in a $\cinf$ way upon the parameter $s\in S$ giving rise to a relative \ka form $\omega_{\cX/S}$.
\end{proposition}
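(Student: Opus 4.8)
\emph{Proof strategy.} The plan is to realise the fibrewise \ke potentials as the solution of an implicit equation on a Banach bundle over $S$ and to apply the implicit function theorem; the invertibility of the linearisation that this requires is precisely what Yau's openness step in the continuity method provides. Since the assertion is local over $S$, I first treat the case where $S$ is a ball $B\subset\C^N$ and indicate at the end how the general reduced case reduces to this one. The relative canonical bundle $\cK_{\cX/S}$ is relatively ample (ampleness being an open condition in proper families); hence, after shrinking $B$, some power $\cK_{\cX/S}^{\otimes m}$ is relatively very ample, and pulling back the Fubini--Study form under a relative projective embedding and dividing by $m$ yields a $\cinf$ relative $(1,1)$-form $\wt\omega=-\ii\pt\ol\pt\log\wt h$ (the curvature of a metric $\wt h$ on $\cK_{\cX/S}$) that restricts to a \ka form $\wt\omega_s$ on every fibre $\cX_s$. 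By Yau's theorem each fibre carries a unique $\varphi_s\in\cinf(\cX_s,\R)$ with
\[
(\wt\omega_s+\ii\pt\ol\pt\varphi_s)^n=e^{\varphi_s}\,\wt\omega_s^{\,n},
\]
and $\omega_s:=\wt\omega_s+\ii\pt\ol\pt\varphi_s$ is the \ke form of Ricci curvature $-1$; it remains to prove that $s\mapsto\varphi_s$ is $\cinf$.

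Using properness, Ehresmann's theorem provides (after shrinking) a $\cinf$ trivialisation $\cX|_B\cong\cX_0\times B$, identifying all the fibrewise Hölder spaces $C^{k,\alpha}(\cX_s)$ (fix $k\ge 2$, $0<\alpha<1$) with a single Banach space $E:=C^{k,\alpha}(\cX_0)$ and exhibiting the fibrewise data (the complex structure on $\cX_0$ and the form $\wt\omega$) as $\cinf$ functions of $s\in B$. Put $E':=C^{k-2,\alpha}(\cX_0)$ and, on the open set where $\wt\omega_s+\ii\pt\ol\pt\varphi>0$, define the $\cinf$ map
\[
\Phi\colon B\times E\longrightarrow E',\qquad
\Phi(s,\varphi)=\log\frac{(\wt\omega_s+\ii\pt\ol\pt\varphi)^{n}}{\wt\omega_s^{\,n}}-\varphi ,
\]
the operator $\ii\pt\ol\pt$ being taken along $\cX_s$. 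Then $\Phi(s,\varphi_s)=0$, and the differential of $\Phi$ in the $\varphi$-direction at $(s,\varphi_s)$ is $\Delta_{\omega_s}-\mathrm{id}$, where $\Delta_{\omega_s}=\gba\pt_\alpha\pt_{\ol\beta}$ is the complex Laplacian of $\omega_s$. On the compact fibre this is a formally self-adjoint elliptic operator whose spectrum is nonpositive; in particular $1$ is not an eigenvalue, so $\Delta_{\omega_s}-\mathrm{id}\colon E\to E'$ is an isomorphism. The implicit function theorem then produces a neighbourhood $B'\ni 0$ and a $\cinf$ map $B'\to E$, $s\mapsto\varphi_s$, which by the uniqueness in Yau's theorem consists of the \ke potentials.

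To pass from this to a relative \ka form I upgrade $\varphi$ to a $\cinf$ function on the total space: it is $\cinf$ along every fibre by Yau's theorem, and $\cinf$ in the $s$-directions with values in $E$ by the previous step. Differentiating $\Phi(s,\varphi)=0$ in the base directions shows that each iterated base derivative of $\varphi$ solves a linear equation $(\Delta_{\omega_s}-\mathrm{id})(\text{derivative})=(\text{smooth data and lower-order terms})$ on $\cX_s$, hence is $\cinf$ along the fibres; iterating and combining with interior Schauder estimates in all variables yields $\varphi\in\cinf(\cX)$. Then $\omega_{\cX/S}:=\wt\omega+\ii\pt\ol\pt\varphi$ (with $\ii\pt\ol\pt$ along the fibres) is a $\cinf$ relative $(1,1)$-form on $\cX$ restricting to $\omega_s$ on each fibre, hence a relative \ka form, and it is independent of the auxiliary trivialisation by the uniqueness statement.

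The one decisive point is the invertibility of $\Delta_{\omega_s}-\mathrm{id}$, i.e.\ that the constant $+1$ forced by the normalisation $\mathrm{Ric}(\omega_s)=-\omega_s$ avoids the spectrum of $\Delta_{\omega_s}$; this is exactly why the \ke equation is unobstructed in the negative case, and it is what powers both Yau's openness step and its parameter version used here. The remaining technical points I would treat more briefly. For a general reduced $S$ one embeds it locally into a ball $B\subset\C^N$ and extends the fibrewise data (the almost complex structure on $\cX_0$ and the reference form) to a $\cinf$ family over a full neighbourhood in $B$; the \ma equation above still makes sense, being an equation on relative $(1,1)$-forms, its differential at the central fibre is unchanged, and the implicit function theorem applies over $B$; restricting the resulting solution to $S$ gives the claim. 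Likewise the existence of the smoothly varying positive reference $\wt\omega$ (openness of relative ampleness) and the $\cinf$ trivialisation (Ehresmann) are standard.
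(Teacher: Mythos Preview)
Your proof is correct and follows essentially the same route as the paper: both arguments set up the fibrewise Monge--Amp\`ere equation as a map between H\"older spaces, observe that its linearisation at a K\"ahler--Einstein solution is $-(\Box+1)$ (your $\Delta_{\omega_s}-\mathrm{id}$), and invoke the implicit function theorem, treating singular reduced base spaces by extending the data to a smooth ambient neighbourhood. The only noteworthy cosmetic difference is in the choice of reference form: the paper takes the K\"ahler--Einstein metric of the central fibre and extends the corresponding hermitian metric on $\cK_{\cX_{s_0}}$ to $\cK_{\cX/S}$, so that the initial point for the implicit function theorem is simply $(s_0,0)$, whereas you use a Fubini--Study background and start at $(s_0,\varphi_{s_0})$; both are equally valid.
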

\begin{proof}
The statement being local, we pick a point $s_0\in S$ and consider the \ke metric $\omega_{\cX_{s_0}}$ on the fiber $\cX_{s_0}$. It gives rise to a hermitian metric on the canonical bundle $\cK_{\cX_{s_0}}$, which we extend to a hermitian metric on $\cK_{\cX/S}$ after replacing $S$ by a neighborhood of $s_0$, if necessary. By continuity the curvature form of the latter metric restricted to all neighboring fibers is strictly positive. We call the relative form $\wt \omega_{\cX/S}$. By assumption the induced form $\wt\omega_{\cX_{s_0}}$ is equal to the \ke form $\omega_{\cX_{s_0}}$. So far, the argument also applies to singular parameter spaces $S$.

Let $X=\cX_{s_0}$. In terms of a differentiable trivialization of the given holomorphic family, any differentiable function $\varphi$ on $X$ gives rise to functions $\varphi_s$ on the fibers $\cX_s$. For sufficiently large $k$ and $0<\lambda<1$ the Hölder spaces $C^{k, \lambda}(X)$ are being used. We first assume that $S$ is smooth. A differentiable map of Banach spaces
$$
\Psi: S\times C^{k+2, \lambda}(X) \to C^{k, \lambda}(X)
$$
is defined by
$$
(s,\varphi) \mapsto \log((\wt\omega_{\cX_s}+ \ii \pt\ol\pt \varphi_s)^n/\wt\omega_{\cX_s}^n  )-\varphi_s.
$$
\ke metrics on the fibers $\cX_s$ correspond to solutions of the equation $\Psi(s,\varphi_s)=0$.

By assumption $\Psi(s_0,0)=0$, and the partial derivative $D_1\Psi$ with respect to $S$ is equal to the invertible operator $-(\Box_{\omega_X} + id)$, where $\Box_{\omega_X}$ is the Laplacian on functions with respect to the \ke metric on the central fiber $X$. Now the implicit function theorem yields the claim.

For a singular base space it is necessary to consider a neighborhood of $s_0$ in a smooth ambient space $W$ of minimal dimension containing $S$ with the following properties. From the construction of a \hbox{(semi-)}universal deformation it is known that there exists a family of almost complex structures on $X$ parameterized by $W$ that are integrable for $s\in S$. In a similar way there exists a complex line bundle over $X\times W$ that is holomorphic and equal to the relative canonical bundle when restricted to $S$. Also the differential operators $\pt$ and $\ol\pt$ extend as operators. These facts imply that the map $\Psi$ can be extended to $W$ (after replacing the space by a neighborhood of $s_0$ if necessary). The extension is not unique, but the implicit function theorem yields a solution $\varphi(s)$ on $S$, which is the {\em restriction of a differentiable function on a smooth ambient space}.
\end{proof}

\begin{proposition}\label{re:omX}
  Let $\omega_{\cX/S}^n$ be the relative volume form for a family of \ke metrics  $\omega_{\cX_s}$. Then the closed, real $(1,1)$-form
\begin{equation}\label{eq:omX}
  \omega_\cX = \idb \log (\omega_{\cX/S}^n)
\end{equation}
on the total space $\cX$, restricted to any fiber $\cX_s$ equals the \ke form $\omega_{\cX_s}$.
\end{proposition}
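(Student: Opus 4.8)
The plan is to express $\omega_\cX$ through a local holomorphic frame and then reduce, fibrewise, to the \ke equation $-Ric(\omega_{\cX_s})=\omega_{\cX_s}$. Over a coordinate neighbourhood in $S$ I would pick fibre coordinates $(z^1,\ldots,z^n)$ and write the relative volume form as $\omega_{\cX/S}^n=g\,dV$, where $dV$ is the Euclidean volume element in the $z^\alpha$ and $g>0$ is the local density; by Proposition~\ref{pr:relka} the form $\omega_{\cX/S}$ is $\cinf$ on the total space, so $g$ is a positive $\cinf$ function on the corresponding part of $\cX$, not merely fibrewise smooth. The expression \eqref{eq:omX} is then, by definition, $\idb\log g$.

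First I would check that $\idb\log g$ is a globally defined real closed $(1,1)$-form. Passing to other fibre coordinates $w$ multiplies $g$ by $|\det(\pt z/\pt w)|^2$; since $f$ is a holomorphic submersion the transition $w=w(z,s)$ is holomorphic in all variables, so $\log|\det(\pt z/\pt w)|^2$ is locally the sum of a holomorphic and an anti-holomorphic function on $\cX$ and is killed by $\idb$. Hence the local forms $\idb\log g$ patch to a global form $\omega_\cX$; closedness is automatic and reality follows from $g$ being real-valued. (Equivalently, $\omega_{\cX/S}^n$ induces a hermitian metric on $\cK_{\cX/S}$ whose curvature is $\omega_\cX$.)

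Next I would restrict to a fibre $\cX_s$. Because the inclusion $\cX_s\hookrightarrow\cX$ is holomorphic, its pullback commutes with $\pt$ and $\ol\pt$, so the restriction of $\omega_\cX$ equals $\idb$, taken now on $\cX_s$, of the restriction of $\log g$. By construction $\omega_{\cX/S}$ restricts on $\cX_s$ to the \ke form $\omega_{\cX_s}=\ii g_{\alpha\ol\beta}\,dz^\alpha\we dz^{\ol\beta}$, hence $g|_{\cX_s}=\det(g_{\alpha\ol\beta})$ is precisely the density of $\omega_{\cX_s}^n$. Invoking the standard identity $\idb\log\det(g_{\alpha\ol\beta})=-Ric(\omega_{\cX_s})$ together with the normalisation $Ric(\omega_{\cX_s})=-\omega_{\cX_s}$ coming from the \ke condition with Ricci curvature $-1$, I obtain $\omega_\cX|_{\cX_s}=\omega_{\cX_s}$.

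There is no genuine obstacle here; the computation is two lines once the bookkeeping is in place. The one point that really uses Proposition~\ref{pr:relka} is the global statement on $\cX$: one must know that $\omega_{\cX/S}^n$ is \emph{smooth on the total space} (so the local densities $g$ are honest smooth functions there), and that the coordinate ambiguity in $g$ is $\idb$-exact, which is what allows the fibrewise \ke forms to be assembled into a single smooth closed $(1,1)$-form on $\cX$.
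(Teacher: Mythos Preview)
Your argument is correct and follows essentially the same route as the paper: write $\omega_{\cX/S}^n=\det(g_{\alpha\ol\beta})\,dV$ in local coordinates, restrict $\idb\log\det(g_{\alpha\ol\beta})$ to a fibre, and invoke $\idb\log\det(g_{\alpha\ol\beta})=-Ric(\omega_{\cX_s})=\omega_{\cX_s}$. You add a useful paragraph verifying global well-definedness of $\omega_\cX$ via the Jacobian transition, which the paper leaves implicit.
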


\begin{proof}
Let $\omega_{\cX_s}= \ii(g_{\alpha\ol \beta}(z,s))dz^\alpha\we dz^{\ol \beta}$, and $\omega_{\cX/S}^n= \det(g_{\alpha\ol \beta}(z,s))\, dV(z,s))$, where $dV$ denotes the relative Euclidean volume form. Then
$$
\omega_\cX|{\cX_s}= \idb\log\det(g_{\alpha\ol \beta}(z,s))|\cX_s= -R_{\gamma \ol \delta}(z,s) dz^\gamma\we dz^{\ol \delta} = \omega_{\cX_s}.
$$
\end{proof}

\subsection*{Notions of deformation theory} In the following sections, we will summarize basic facts and give outlines of proofs (cf.\ \cite{Sch1,sch:alg}).

Given a compact complex manifold $X$ and a complex space $S$ together with a distinguished point $s_0\in S$ a {\em deformation} $\xi$ of $X$ over $(S,s_0)$ is given by
\begin{itemize}
  \item[(i)] A proper, holomorphic submersion $f:\cX \to S$
  \item[(ii)] A biholomorphic map $\varphi: X \stackrel{\sim}{\longrightarrow} \cX_{s_0}$.
\end{itemize}
When using the methods of deformation theory, it will often be necessary to replace the base space by a neighborhood of the distinguished point. In this sense it is meaningful to consider deformations over germs of complex spaces.

Note that an {\em isomorphism} of deformations is given by a biholomorphic map of the total spaces over the base space of the given families $(i)$ that is compatible with the isomorphisms $(ii)$.

If $q: (R,r_0) \to (S,s_0)$ is a holomorphic map of complex spaces with distinguished base points and $\xi$ a deformation of $X$ over $(S,s_0)$ then a deformation $q^*\xi$ of $X$ over $(R,r_0)$ is constructed by {\em base change}\/: The corresponding holomorphic family is $\cX_R :=\cX \times_S R \to R$, and the isomorphism $X \to \cX_{R,r_0}$ is induced by $\varphi$ and the canonical isomorphism $\cX_{s_0} \isom \cX_{R,r_0}$. The deformation $q^*\xi$ is called the {\em pull-back} of $\xi$ with respect to $q$.

A deformation $\xi$ of $X$ is called {\em universal}, if after replacing the base spaces by neighborhoods of the distinguished points, any deformation of $X$ is isomorphic to a pull-back $q^*\xi$ of $\xi$, where the map $q$ (as a holomorphic map of space germs) is uniquely determined. In general, only {\em semi-universal} deformations exist. (Here the condition of the uniqueness of the base change map $q$ is weakened to the uniqueness of the tangent map of $q$ at the distinguished point).

\subsection*{Application to moduli spaces}
In the analytic category moduli spaces are constructed by means of deformation theory. Set theoretically a (coarse) moduli space $\cM$ consists of all isomorphism classes of complex manifolds from a given class. For holomorphic families $f:\cX \to S$ of such manifolds, the natural maps $\varphi_f:S \to \cM$ that send a point $s\in S$ to the isomorphism class of its fiber provide the set $\cM$ with a natural (quotient) topology. A natural equivalence relation $\sim$ is defined on the union of all such base spaces $S$, which identifies points, if the respective fibers are isomorphic so that $\cM=(\cup S)/\! \sim$. Note that on any connected component of the moduli space the underlying differentiable structure of the fibers is fixed so that the set of isomorphism classes of complex structures exists.

\begin{theorem}
  The moduli space $\cM$ of canonically polarized manifolds is a coarse moduli space, i.e.\ $\cM$ possesses a complex structure such that for all holomorphic families $f:\cX \to S$ the map $\varphi_f:S\to \cM$ that sends a point of $S$ to the isomorphism class of its fiber, is holomorphic.
\end{theorem}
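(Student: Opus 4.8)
The plan is to equip $\cM$ with a complex structure locally, near each isomorphism class $[X]$, as the quotient of a Kuranishi space by the finite automorphism group of $X$, and then to glue these charts by means of the universality of the Kuranishi family. Two features special to the canonically polarized case make this work, and I would establish them first: that $\mathrm{Aut}(X)$ is \emph{finite}, and that the Kuranishi (semi-universal) deformation of $X$ is in fact \emph{universal}.

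For the finiteness of $\mathrm{Aut}(X)$ I would use Yau's theorem: if $\phi\in\mathrm{Aut}(X)$ then $\phi^*\omega_X$ is again a \ke form with Ricci curvature $-1$, so $\phi^*\omega_X=\omega_X$ by uniqueness, and thus $\mathrm{Aut}(X)$ is a closed subgroup of the compact isometry group of $(X,\omega_X)$; since the Ricci curvature is negative, Bochner's theorem gives $H^0(X,\cT_X)=0$, so $\mathrm{Aut}(X)$ is discrete, hence finite. The same vanishing, together with semicontinuity of $s\mapsto\dim H^0(\cX_s,\cT_{\cX_s})$, shows that $H^0(\cX_s,\cT_{\cX_s})=0$ for all fibers near $X$; therefore the Kuranishi deformation $f\colon\cX\to S$ of $X$, with $T_{s_0}S\cong H^1(X,\cT_X)$, has no infinitesimal automorphisms in a neighborhood of $s_0$ and is universal, and likewise the Kuranishi family of every nearby fiber is universal. (Throughout, $S$ may be taken reduced, as is appropriate when dealing with moduli.)

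Next I would let $G:=\mathrm{Aut}(X)$ act holomorphically on a suitable representative $S$ of the Kuranishi germ, lifting the action to $\cX$ over $S$; by Cartan's theorem the finite quotient $S/G$ is again a complex space, and it serves as a chart for $\cM$ near $[X]$, now intrinsic to the isomorphism class. To glue, take an arbitrary holomorphic family $f\colon\cX\to S$: for each $s\in S$, universality of the Kuranishi family of $\cX_s$ gives, on a neighborhood of $s$, a holomorphic classifying map to the Kuranishi space $S_{[\cX_s]}$ of $\cX_s$, and composing with the quotient map yields a holomorphic map into $S_{[\cX_s]}/\mathrm{Aut}(\cX_s)$; these maps are compatible under restriction and base change. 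They therefore determine the complex structure on $\cM$ and, simultaneously, the holomorphy of $\varphi_f\colon S\to\cM$. The coarse universal property — that $\cM$ is initial among complex spaces carrying a compatible system of maps from all such base spaces $S$ — then follows, since any such map factors locally through the Kuranishi classifying map followed by $S_{[\cX_s]}\to S_{[\cX_s]}/\mathrm{Aut}(\cX_s)$.

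The step I expect to be the main obstacle is separatedness: one must show that the classifying maps above are injective on isomorphism classes, i.e.\ that two fibers $\cX_s,\cX_{s'}$ of a Kuranishi family are isomorphic if and only if $s$ and $s'$ lie in the same $G$-orbit, and correspondingly that $\cM$ is Hausdorff. An isomorphism $\cX_s\isom\cX_{s'}$ induces by universality an isomorphism of the two Kuranishi germs, which near $s_0$ can only be the germ of one of the finitely many elements of $G$; the subtle point is that such an isomorphism of fibers need not be close to the identity. Here Proposition~\ref{pr:relka} re-enters: every isomorphism of fibers is an isometry for the corresponding \ke metrics, and by the $\cinf$ dependence of these metrics on the parameter together with the Arzel\`a--Ascoli theorem, any sequence of such isomorphisms lies in a compact family, so a convergent subsequence exists whose limit is an isomorphism of the limiting fibers. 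This compactness argument — which likewise yields the Hausdorff property under limits $s_n\to s_0$, $s_n'\to s_0'$ with $\cX_{s_n}\cong\cX_{s_n'}$ — is the technical heart of the construction.
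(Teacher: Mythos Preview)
Your proposal is correct and follows essentially the same route as the paper: vanishing of $H^0(X,\cT_X)$ gives universality and (together with the \ke isometry argument) finiteness of $\mathrm{Aut}(X)$; one then takes $S/\mathrm{Aut}(X)$ as charts and identifies the compactness of isometries---your Arzel\`a--Ascoli step is the paper's Main Lemma, proved there via Myers--Steenrod---as the key to Hausdorffness and to the bijection $S/\mathrm{Aut}(X)\to S/{\sim}$. The one point the paper makes explicit that you leave implicit is the very last step of that bijection: after normalizing so that the limiting isomorphism is the identity, the paper invokes the fact that isomorphisms of families are parameterized by a complex space to produce an analytic curve $C$ with two classifying maps $\alpha,\beta:C\to S$ inducing isomorphic deformations, and then universality forces $\alpha=\beta$.
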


It can be seen immediately that universality of semi-universal deformations is required. Since canonically polarized manifolds do not possess non-vanishing holomorphic vector fields, this condition is satisfied. Still, in general, there exist points in the parameter spaces $S$, whose fibers are isomorphic.

We have the following general statement.
\begin{proposition}\label{pr:univdef}
Assume that there exists a universal deformation $\xi$ of a compact complex manifold $X$ over a space $(S,s_0)$. Then
\begin{itemize}
  \item[(i)] there exists a natural action of the group $Aut(X)$ on the space germ of $(S,s_0)$.
  \item[(ii)] The ineffectivity kernel of the above group action consist of those automorphisms that extend to automorphisms of $\cX$ over $S$.
  \end{itemize}
\end{proposition}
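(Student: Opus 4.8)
The plan is to extract the action directly from the universal property. For $\sigma\in Aut(X)$, leave the family $f:\cX\to S$ unchanged but replace the marking $\varphi:X\isom\cX_{s_0}$ by $\varphi\circ\sigma^{-1}$; this produces a new deformation $\sigma\cdot\xi$ of $X$ over $(S,s_0)$. By universality of $\xi$ there is a germ of holomorphic map $q_\sigma:(S,s_0)\to(S,s_0)$, unique with this property, together with an isomorphism of deformations $q_\sigma^*\xi\cong\sigma\cdot\xi$. First I would note that pulling back commutes with this twist: for any $q:(R,r_0)\to(S,s_0)$ one has $q^*(\sigma\cdot\xi)=\sigma\cdot(q^*\xi)$ on the nose, since $q$ affects neither the central fibre nor the canonical identification used to define the pulled-back marking. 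Combining this with functoriality of pull-back, $(q_\sigma\circ q_\tau)^*\xi=q_\tau^*(q_\sigma^*\xi)\cong q_\tau^*(\sigma\cdot\xi)=\sigma\cdot(q_\tau^*\xi)\cong\sigma\cdot(\tau\cdot\xi)=(\sigma\tau)\cdot\xi$, where $\sigma\cdot(\tau\cdot\xi)=(\sigma\tau)\cdot\xi$ because $\varphi\circ\tau^{-1}\circ\sigma^{-1}=\varphi\circ(\sigma\tau)^{-1}$. The uniqueness clause of the universal property now forces $q_{\sigma\tau}=q_\sigma\circ q_\tau$ and $q_{\mathrm{id}_X}=\mathrm{id}$, so $\sigma\mapsto q_\sigma$ is a homomorphism of $Aut(X)$ into the group of germ automorphisms of $(S,s_0)$, each $q_\sigma$ being invertible with inverse $q_{\sigma^{-1}}$. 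This is the asserted action in (i).

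For (ii), an element $\sigma$ lies in the ineffectivity kernel exactly when $q_\sigma=\mathrm{id}$ as a germ, which by definition of $q_\sigma$ happens exactly when $\sigma\cdot\xi\cong\xi$ as deformations over $(S,s_0)$. Spelling out what an isomorphism of deformations is, this means that, after shrinking $S$ to a neighbourhood of $s_0$, there is a biholomorphic map $\Phi:\cX\to\cX$ over $S$ compatible with the markings, i.e.\ $\Phi\circ\varphi=\varphi\circ\sigma^{-1}$ on $\cX_{s_0}$; identifying $X$ with $\cX_{s_0}$ through $\varphi$, this says precisely that $\Phi$ is an automorphism of $\cX$ over $S$ restricting to $\sigma^{-1}$ on the central fibre. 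Conversely, any automorphism of $\cX$ over $S$ whose restriction to $\cX_{s_0}$ is $\sigma^{-1}$ is, read the same way, an isomorphism $\xi\isom\sigma\cdot\xi$, hence forces $q_\sigma=\mathrm{id}$. Thus the ineffectivity kernel consists exactly of those $\sigma\in Aut(X)$ that extend (equivalently, whose inverse extends --- restriction to the central fibre is a group homomorphism from the automorphisms of $\cX$ over $S$ to $Aut(X)$) to an automorphism of $\cX$ over $S$.

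The one place that genuinely needs care is the germ bookkeeping. All families, maps, and isomorphisms above live only over representatives of $(S,s_0)$, to be shrunk as needed, and one has to be sure that the identity $q_{\sigma\tau}=q_\sigma\circ q_\tau$ holds as an identity of germs --- so that the homomorphism, and hence the group action, is well defined --- rather than merely on a neighbourhood depending on the chosen $\sigma,\tau$; this is exactly what the uniqueness part of the universal property guarantees, and it is also why no finiteness or compactness hypothesis on $Aut(X)$ is needed. A lesser subtlety in (ii) is that the isomorphism $\Phi$ realising $\sigma\cdot\xi\cong\xi$ is a priori determined only up to an automorphism of $\cX$ over $S$ trivial on the central fibre, but since only the existence of $\Phi$ enters, this causes no trouble.
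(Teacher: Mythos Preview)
Your argument is correct and follows essentially the same route as the paper: twist the marking $\varphi$ by an automorphism of $X$, invoke universality to obtain $q_\sigma$, and read off (ii) from the definition of an isomorphism of deformations. The paper's proof is terse on exactly the points you expand (the group-action identity $q_{\sigma\tau}=q_\sigma\circ q_\tau$ and the unpacking of (ii)), and your choice of $\varphi\circ\sigma^{-1}$ rather than $\varphi\circ\sigma$ is merely a left-versus-right convention.
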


The {\em proof} follows from a pure deformation theoretic argument: Let $\alpha\in Aut(X)$. Then in terms of the above notation replacing $\varphi$ by $\varphi\circ \alpha$ we obtain a further deformation of $X$, which is isomorphic to a pull-back $q^*_\alpha(\xi)$ under a holomorphic map $q_\alpha:(S,s_0) \to (S,s_0)$, since $\xi$ is universal by assumption. This proves the first assertion. The second statement follows from the definition. \qed

The existence of unique \ke metrics implies the following key property, which we state here for families of canonically polarized manifolds. For {\em polarized} families of Ricci flat manifolds the analogous statement is true.
\begin{mainlemma}
Let $\cX \to S$ and $\cZ \to T$ be holomorphic families of \ke manifolds with constant Ricci curvature equal to $-1$. Let $s_\nu \in S$ and $t_\nu \in T$ be points converging to $s_0\in S$ and $t_0\in T$ resp. Suppose that there exist biholomorphic maps of the fibers $\psi_\nu:\cX_{s_\nu} \isom \cZ_{t_\nu}$. Then a subsequence converges to an isomorphism $\psi_0 :\cX_{s_0} \isom \cZ_{t_0}$.
\end{mainlemma}

\begin{proof}
It follows from the uniqueness of the \ke metrics on the fibers that the isomorphisms $\psi_\nu$ are isometries. As these are isometries in the sense of metric spaces, a subsequence converges to an isometry, which must be an isometry, also in the sense of \ka manifolds by the theorem of Myers and Steenrod. Hence, the limit fibers are isomorphic as complex manifolds.
\end{proof}

\begin{corollary}\label{co:hausd}
  The topology of the moduli space of canonically polarized manifolds is Hausdorff.
\end{corollary}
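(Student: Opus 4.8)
The plan is to reduce Hausdorffness of $\cM$ to the Main Lemma, whose convergence statement supplies all of the geometric content. First I would recall from the construction of the coarse moduli space that if $\cX\to S$ is the (universal) deformation of a canonically polarized manifold $X$ with $\cX_{s_0}\cong X$, then the natural map $\varphi=\varphi_{\cX/S}\colon S\to\cM$ is open onto an open neighborhood of $p:=[X]$, and, as $S'$ runs through the open neighborhoods of $s_0$, the images $\varphi(S')$ form a neighborhood basis of $p$ in $\cM$. (Locally $\cM$ is the quotient $S/\mathrm{Aut}(X)$ by the finite group $\mathrm{Aut}(X)$, and $\varphi$ is the corresponding open quotient map.) Since $S$ is a complex space it is first countable at $s_0$, so there is a decreasing sequence $S\supseteq S_1\supseteq S_2\supseteq\cdots$ of such neighborhoods with $\bigcap_\nu S_\nu=\{s_0\}$; in particular any choice of points $s_\nu\in S_\nu$ satisfies $s_\nu\to s_0$.

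Next, arguing by contradiction, suppose $p=[X]$ and $q=[Y]$ are distinct points of $\cM$ that cannot be separated by disjoint open sets. Fix universal deformations $\cX\to S$ of $X$ and $\cZ\to T$ of $Y$ with $\cX_{s_0}\cong X$, $\cZ_{t_0}\cong Y$, write $\varphi=\varphi_{\cX/S}$, $\psi=\varphi_{\cZ/T}$, and choose decreasing neighborhood bases $(S_\nu)$ of $s_0$ and $(T_\nu)$ of $t_0$ as above. Each $\varphi(S_\nu)$ is an open neighborhood of $p$ and each $\psi(T_\nu)$ an open neighborhood of $q$, so by the non-separation hypothesis $\varphi(S_\nu)\cap\psi(T_\nu)\neq\emptyset$ for every $\nu$. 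A point of this intersection is, on the one hand, the isomorphism class of a fiber $\cX_{s_\nu}$ with $s_\nu\in S_\nu$ and, on the other, that of a fiber $\cZ_{t_\nu}$ with $t_\nu\in T_\nu$; hence there is a biholomorphic map $\psi_\nu\colon\cX_{s_\nu}\isom\cZ_{t_\nu}$, and $s_\nu\to s_0$, $t_\nu\to t_0$.

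This is precisely the hypothesis of the Main Lemma, which yields a subsequence of $(\psi_\nu)$ converging to an isomorphism $\psi_0\colon\cX_{s_0}\isom\cZ_{t_0}$. Consequently $X\cong\cX_{s_0}\cong\cZ_{t_0}\cong Y$, so $p=q$, contradicting $p\neq q$. Hence $\cM$ is Hausdorff.

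I expect the only real obstacle to be the topological bookkeeping of the first step --- verifying that the maps $\varphi_{\cX/S}$ are open onto open neighborhoods of the moduli points and that these neighborhoods can be made to shrink to a single point, so that the sequences $s_\nu\to s_0$, $t_\nu\to t_0$ with mutually isomorphic fibers genuinely exist. Once this is set up, no estimate is required: the compactness argument is entirely absorbed into the Main Lemma.
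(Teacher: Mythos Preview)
Your proof is correct and follows essentially the same route as the paper's: argue by contradiction, use the images of shrinking neighborhoods in the bases of universal deformations as neighborhoods of $p$ and $q$ in $\cM$, extract sequences $s_\nu\to s_0$, $t_\nu\to t_0$ with isomorphic fibers, and invoke the Main Lemma to force $p=q$. The paper's argument is terser and leaves implicit precisely the topological bookkeeping you spell out (openness of $\varphi_{\cX/S}$ and first countability), but the logical structure is identical.
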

We will see below that the Hausdorff property is related to the existence of an analytic structure.

\begin{proof}[Proof of the Corollary]
  Given two different points of $p,q \in \cM$ one can find base spaces $(S,s_0)$ and $(T,t_0)$ of universal deformations so that $p$, and $q$ are the images of $s_0$ and $t_0$ resp. If $p$ an $q$ cannot be separated by open subsets, in any neighborhood $S_\nu$ and $T_\nu$  of $s_0$ in $S$ and $t_0$ in $T$ resp.\ there exist equivalent points $s_\nu$ and $t_\nu$. The Main Lemma would imply that $p=q$ in $\cM$.
\end{proof}

Together with the fact that there are no non-vanishing holomorphic vector fields, the Main Lemma also yields an analytic argument for the fact that the automorphism group of a canonically polarized manifold is finite.

\begin{proposition}\label{pr:S/G}
  Let $(S,s_0)$ be a sufficiently small base space on which $Aut(X)$ acts. Then the canonical map
  $$
  \mu:S/Aut(X) \to S/\!\!\sim
  $$
  is a homeomorphism.
\end{proposition}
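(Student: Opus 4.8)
The plan is to show that $\mu$ is a continuous bijection which is moreover the identity map of a single quotient space, hence a homeomorphism. Well definedness and surjectivity are formal. By Proposition~\ref{pr:univdef} an element $\alpha\in Aut(X)$ acts on $(S,s_0)$ through the base change map $q_\alpha$, characterised by an isomorphism of deformations $(\cX\to S,\varphi\circ\alpha)\cong q_\alpha^*\xi$; since such an isomorphism is in particular a biholomorphic map of the total spaces over $q_\alpha$, it restricts to biholomorphic maps $\cX_s\isom\cX_{q_\alpha(s)}$ of all fibres. Hence points in one $Aut(X)$-orbit have isomorphic fibres, so $\mu$ is well defined; it is surjective because every $\sim$-class meeting $S$ is the image of a point of $S$; and it is continuous, being induced by $\mathrm{id}_S$ while both targets carry the quotient topology. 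Moreover, once $\mu$ is also injective -- that is, once isomorphic fibres $\cX_{s_1}\cong\cX_{s_2}$ are shown to force $s_2\in Aut(X)\cdot s_1$ -- the relation $\sim$ restricted to $S$ coincides with the $Aut(X)$-orbit equivalence, so $S/Aut(X)$ and $S/\!\!\sim$ are literally the same quotient of $S$ with the same quotient topology and $\mu$ is the identity map. It therefore only remains to prove injectivity.

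For injectivity I would work with the relative isomorphism space $I\to S\times S$ whose fibre over $(s',s'')$ is the finite set of biholomorphic maps $\cX_{s'}\isom\cX_{s''}$. By the Main Lemma the projection $I\to S\times S$ is proper; since canonically polarized manifolds carry no holomorphic vector fields it is also quasi-finite and unramified, hence finite. Its fibre over $(s_0,s_0)$ is $Aut(\cX_{s_0})\cong Aut(X)$, and each $\alpha\in Aut(X)$ provides, through the total-space isomorphism over $q_\alpha$ of Proposition~\ref{pr:univdef}, a section of $I$ over the graph $\Gamma_{q_\alpha}=\{(s,q_\alpha(s))\}\subset S\times S$. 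If one knows that, after shrinking $S$, the space $I$ near its fibre over $(s_0,s_0)$ is exactly the disjoint union $\coprod_{\alpha}\Gamma_{q_\alpha}$ (up to the non-reduced structure coming from the ineffectivity kernel), then every biholomorphic $\psi\colon\cX_{s_1}\isom\cX_{s_2}$ with $s_1,s_2\in S$ represents a point of some $\Gamma_{q_\alpha}$, i.e.\ $s_2=q_\alpha(s_1)$ -- which is precisely injectivity of $\mu$.

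Proving that $I$ has no branch through the central fibre other than the graphs $\Gamma_{q_\alpha}$ is the step I expect to be the real work, and it is where the hypothesis that $S$ be ``sufficiently small'' enters. It amounts to an openness-of-versality statement for deformations of isomorphisms (cf.\ \cite{Sch1,sch:alg}): a deformation of an isomorphism $\beta\colon\cX_{s_0}\isom\cX_{s_0}$ is given by independent deformations of its source and of its target -- by universality of $\xi$, a pair of germ maps into $S$ -- together with an isomorphism extending $\beta$, and the latter exists precisely when the two germ maps are related by $q_{\alpha_\beta}$, with nothing to obstruct it and no infinitesimal automorphism to make it non-unique. A more hands-on variant derives the same conclusion by taking a putative sequence $\psi_\nu\colon\cX_{s_1^\nu}\isom\cX_{s_2^\nu}$ with $s_1^\nu,s_2^\nu\to s_0$ and $s_2^\nu\notin Aut(X)\cdot s_1^\nu$, passing with the Main Lemma to a limit lying in $Aut(\cX_{s_0})$, and contradicting the uniqueness in the universal property by means of the finiteness of $Aut(X)$; it requires the same rigidity input in order to exclude a continuum of such germ symmetries of $S$ near $s_0$.
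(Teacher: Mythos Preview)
Your proposal is correct and follows essentially the same route as the paper: both arguments rest on the analytic isomorphism space $I\to S\times S$ (cited in the paper as \cite{fu1,Sch1}), the Main Lemma to control limits of fibre isomorphisms, and universality of $\xi$ to force the two projections to agree. Your ``hands-on variant'' at the end is precisely the paper's argument: assume injectivity fails on every neighbourhood, extract sequences $s_\nu,t_\nu\to s_0$ with $\cX_{s_\nu}\cong\cX_{t_\nu}$, use the Main Lemma to get a limiting automorphism, normalise it to the identity via the $Aut(X)$-action, and then use the analytic structure of $I$ together with universality to derive a contradiction.

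Two small stylistic differences are worth noting. First, the paper asserts that $\mu$ is \emph{open} ``by construction'' and then proves injectivity; you instead observe that once injectivity holds the two equivalence relations on $S$ coincide, so the two quotients are literally the same space --- a clean way to get the homeomorphism without a separate openness argument. Second, your approach (a), computing the germ of $I$ at each point $(s_0,s_0,\alpha)$ directly via the universal property (a deformation of the isomorphism $\alpha$ forces $\delta=q_\alpha\circ\gamma$ for the two base maps), is a somewhat more conceptual alternative to the paper's sequence-and-curve argument; it uses the same ingredients but avoids the passage through accumulation points and the extraction of an analytic curve in $I$.
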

\begin{corollary}
  The moduli space $\cM$ carries a natural complex structure.
\end{corollary}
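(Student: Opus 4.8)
The plan is to build the complex structure on $\cM$ from local charts given by quotients of base spaces of universal deformations, and then to check that these charts are mutually compatible. Fix $p \in \cM$, represented by a canonically polarized manifold $X$. Since $X$ admits no non-vanishing holomorphic vector field, there is a universal deformation $\xi$ of $X$ over a germ $(S,s_0)$; after shrinking $S$, the finite group $G := Aut(X)$ acts holomorphically on $S$ fixing $s_0$ (Proposition~\ref{pr:univdef}), and the quotient $S/G$ carries a canonical structure of complex space (quotient of a complex space by a finite group of biholomorphisms, after H.~Cartan). By Proposition~\ref{pr:S/G} the natural map identifies $S/G$ homeomorphically with the open neighborhood $\varphi_\xi(S) = S/\!\!\sim$ of $p$ in $\cM$, so near $p$ the space $\cM$ receives a complex-space structure; the Hausdorff property (Corollary~\ref{co:hausd}) is exactly what is needed for these local models to patch into an honest complex space.

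Before patching, I would record that the germ of complex structure at $p$ does not depend on the chosen universal deformation. If $(\tilde S,\tilde s_0)$ carries a second universal deformation of the same $X$, the uniqueness clause in the definition of a universal deformation gives an isomorphism of germs $(S,s_0) \isom (\tilde S,\tilde s_0)$ taking one family to the other; this isomorphism is $G$-equivariant (it is determined by the families, on which $G$ acts compatibly), hence descends to a biholomorphism $S/G \isom \tilde S/G$ commuting with the identifications onto $S/\!\!\sim$.

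The main step is to glue the local models: for $p,p' \in \cM$ with overlapping charts $U_p = S/G$ and $U_{p'} = S'/G'$, I must show that the identity on $U_p \cap U_{p'}$ is biholomorphic. It suffices to work near an arbitrary $q \in U_p \cap U_{p'}$, which is represented by a fiber $\cX_s$, $s \in S$, and a fiber $\cX'_{s'}$, $s' \in S'$, with $\cX_s \cong \cX'_{s'}$. Here one uses that versality is an open condition, so that after shrinking, the restriction of $\xi$ to a neighborhood of $s$ is a universal deformation of $\cX_s$ --- this is where it matters that \emph{every} fiber is canonically polarized, so that no holomorphic vector fields occur and bijectivity of the \ks map persists at the nearby fibers --- and likewise for $\xi'$ near $s'$. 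Applying the uniqueness argument of the previous paragraph to these two universal deformations of $\cX_s \cong \cX'_{s'}$ yields a biholomorphism between a neighborhood of the image of $s$ in $S/G$ and a neighborhood of the image of $s'$ in $S'/G'$ that sends fibers to isomorphic fibers; being isomorphism-class preserving, it coincides with the identity of $U_p \cap U_{p'}$ near $q$. Thus the transition maps are holomorphic and the charts define a global complex structure on $\cM$.

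Finally, for naturality I would check that for any holomorphic family $f:\cX \to S$ the map $\varphi_f : S \to \cM$ is holomorphic: locally around $s$ the family is, by versality, the pull-back of the universal deformation of $\cX_s$ under a holomorphic map into its base, and composing with the holomorphic quotient map onto the chart $U_{\varphi_f(s)}$ recovers $\varphi_f$. I expect the genuine obstacle to be the claim that a universal deformation stays universal near a non-distinguished point --- that is, propagating bijectivity of the \ks map to nearby fibers while controlling the residual action of the finite group --- rather than the (standard) passage to the quotient by a finite group.
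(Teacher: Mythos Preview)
Your argument is correct and follows essentially the same route as the paper's proof: local charts are quotients $S/Aut(X)\simeq S/\!\!\sim$ of base spaces of universal deformations, the gluing maps are obtained by lifting to holomorphic maps between base spaces using that the universal family $\cX\to S$ remains universal at every point of $S$, and the Hausdorff property from Corollary~\ref{co:hausd} is invoked. Your write-up is more detailed (independence of the chosen universal deformation, explicit verification that $\varphi_f$ is holomorphic), but the underlying strategy is identical.
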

\begin{proof}[Proof of the Corollary]
  The moduli space is glued together from complex spaces of the form $S/\!\!\sim$. The gluing maps can be lifted locally to the base spaces of universal deformations as holomorphic maps, since  the universal families $\cX\to S$ yield universal deformations for any point of the base.  The Hausdorff property was already shown in Corollary~\ref{co:hausd}.
\end{proof}

\begin{proof}[Proof of Proposition~\ref{pr:S/G}]
  After replacing $S$ by a neighborhood of $s_0$ there are no further fibers isomorphic to $X$, since the orbit of $Aut(X)$ is finite. The construction implies that $\mu$ is continuous and open. If it is not injective on any neighborhood of the distinguished point, there exist pairwise different points $s_\nu$ and $t_\nu$ in $S$ converging to $s_0$, whose fibers are pairwise isomorphic. We apply the Main Lemma and see that a subsequence of such isomorphisms converge to an automorphism of the central fiber. Using the action of $Aut(X)$ on $S$ from Proposition~\ref{pr:univdef}, we can assume without loss of generality that the limiting automorphism is the identity. It is known that isomorphisms of holomorphic families are parameterized by a complex space \cite{fu1,Sch1}. In our situation, we have a sequence of points with a limit point in this space of isomorphisms. This means that there exists an analytic curve $C$ with two different holomorphic maps $\alpha, \beta:C \to S$, such that the pull-backs of the given  holomorphic families via $\alpha$ and $\beta$ resp.\ are isomorphic. Moreover, at $s_0$ the isomorphisms yield the identity of $X$ so that also the deformations over $C$ are equal. By universality of $\xi$, the maps $\alpha$ and $\beta$ would have to be equal.
\end{proof}

\section{Properties of the \wp metric}\label{se:pw}
Applying the methods of deformation theory to classical \tei theory A.~Weil suggested to study a hermitian metric on the \tei space that is compatible with base change i.e.\ with the action of the \tei modular group and hence descends to the moduli space of compact Riemann surfaces \cite{weil}. In the context of automorphic forms the inner product had been studied by H.~Petersson earlier. For quadratic holomorphic differentials on Riemann surfaces, a canonical inner product is defined in terms hyperbolic metrics, providing the cotangent bundle of the \tei space with a~hermitian metric. On the tangent bundle the dual metric is defined in a natural way for harmonic Beltrami differentials. The \wp metric was shown to be \ka and of negative Ricci curvature by Ahlfors. The curvature tensor of the classical \wp metric was computed by Wolpert \cite{wo} and Tromba \cite{tr}.

Yau's solution of the Calabi problem allowed the construction of a generalized \wp metric for families of canonically polarized manifolds and polarized Calabi-Yau manifolds, (for details refer to \cite{f-s}.)

We summarize basic properties (cf.\ \cite{Sch1,sch-preprint08,sch-preprint10,sch-inv}). The \ks map for a deformation of a complex manifold was originally given in terms of a differentiable trivialization. This approach can be interpreted as follows: For any holomorphic, proper, submersion $f:\cX \to S$ the exact sequence
\begin{equation}\label{eq:tang}
0 \to \cT_{\cX/S} \to \cT_\cX \to f^* \cT_S \to 0
\end{equation}
is being considered. For any point $s \in S$ the edge homomorphism of the associated exact sequence of direct image sheaves
\begin{equation*}\label{eq:tangks}
f_*  f^* \cT_S \to R^1f_* \cT_{\cX/S}
\end{equation*}
yields the \ks map $\rho_{s}:T_{s}S \to H^1(\cX_{s}, T_{\cX_s})$.
The construction is ''functorial'' i.e.\ compatible with the base change of holomorphic families. Now the computation in terms of Dolbeault cohomology yields the classical definition of the \ks map in the following way.

We first assume that $S$ is smooth.  A differentiable splitting of \eqref{eq:tang} is determined by a differentiable trivialization. So, if $s$ is a coordinate function on $S$, and  $\pt/\pt s\in T_sS$ a tangent vector at a point of $S$, then the splitting yields a  differentiable vector field $\pt/\pt s \, + \, b^\alpha(z,s) \pt/\pt z^\alpha$ on $\cX$ that projects down to $\pt/\pt s$ -- again $(z^1,\ldots, z^n)$ are local coordinates on the fibers of $f$. Now
$$
B(z)^\alpha_{\; \ol \beta} \pt_\alpha dz^{\ol \beta} = \ol\pt(\pt/\pt s \, + \, b^\alpha(z,s) )| \cX_s
$$
is a $\ol\pt$-closed form on the fibers which need not be $\ol\pt$-exact as a form on the fibers.

It is easy to see that the above process only requires calculations on the first infinitesimal neighborhoods of the fibers. Altogether we have the following statement.

\begin{lemma}\label{le:ks}
The \ks map
$$
\rho_{s}:T_{s}S \to H^1(\cX_{s}, T_{\cX_s})
$$
assigns to a tangent vector $\pt/\pt s$ the cohomology class $[B^\alpha_{\; \ol \beta }(z,s)\pt_\alpha dz^{\ol \beta} ]$.
\end{lemma}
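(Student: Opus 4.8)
The plan is to compare the sheaf-theoretic definition of $\rho_s$ as an edge homomorphism with the Dolbeault computation of a connecting homomorphism, and to observe that a differentiable splitting of \eqref{eq:tang} supplies precisely the lift that enters the latter. Since the assertion is local on $S$ and $\C$-linear in $\pt/\pt s$, I fix a point $s_0\in S$ and, as in the text, assume $S$ smooth with a local coordinate $s$, so that $\pt/\pt s$ trivialises $f^*\cT_S$ near the fiber $X:=\cX_{s_0}$; the general case follows coordinate by coordinate.

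First I restrict the short exact sequence \eqref{eq:tang} of locally free sheaves to $X$. Because the quotient $f^*\cT_S$ is locally free, \eqref{eq:tang} is locally split, so its restriction
$$
0 \to \cT_X \to \cT_\cX|_X \to \cO_X \to 0
$$
is again exact, the last arrow being ``take the $\pt/\pt s$-component''. Since $X$ is compact and connected, $H^0(X,\cO_X)=\C$, and in the associated long exact cohomology sequence the connecting homomorphism
$$
\delta\colon \C = H^0(X,\cO_X)\longrightarrow H^1(X,\cT_X)
$$
carries $1$ — i.e.\ $\pt/\pt s\in T_{s_0}S$ — to $\rho_{s_0}(\pt/\pt s)$. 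The one structural point here is that this fiberwise $\delta$ coincides with the edge homomorphism $f_*f^*\cT_S\to R^1f_*\cT_{\cX/S}$ evaluated at $s_0$; this is the compatibility of connecting homomorphisms with the base change $\{s_0\}\hookrightarrow S$ (using $f_*f^*\cT_S=\cT_S$ and that the formation of the edge map commutes with restriction to a point — alternatively one works with the relative Dolbeault complex on a neighborhood of $s_0$ and reads the edge map off there directly). I expect this matching of the abstract edge map with the fiberwise picture to be the main obstacle; the rest is bookkeeping.

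Next I evaluate $\delta(1)$ by means of the fine Dolbeault resolution of the restricted sequence. This turns $0 \to \cT_X \to \cT_\cX|_X \to \cO_X \to 0$ into a short exact sequence of complexes of acyclic (fine) sheaves, and the standard recipe for the connecting map reads: lift the $\db$-closed section $1$ of $\cO_X$ to a differentiable section $v$ of $\cT_\cX|_X$ mapping to $1$, and set $\delta(1)=[\db v]$; here $\db v$ is a $\db$-closed $(0,1)$-form with values in $\cT_X$, since it maps to $\db 1=0$. A differentiable trivialisation of $f$ produces exactly such a $v$: in a local product chart it is $v=\pt/\pt s + b^\alpha(z,s)\,\pt_\alpha$ restricted to $X$, and this maps to $\pt/\pt s$, hence to $1$. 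Since $\pt/\pt s$ is holomorphic in these coordinates, $\db v|_X=\db(b^\alpha\pt_\alpha)|_X=(\pt_{\ol\beta}b^\alpha)\,\pt_\alpha\,dz^{\ol\beta}=B^\alpha_{\;\ol\beta}(z,s_0)\,\pt_\alpha\,dz^{\ol\beta}$, which is the form introduced before the lemma. The choices are immaterial: two lifts of $1$ differ by a differentiable section of $\cT_X$, so the two forms $\db v$ differ by a $\db$-exact one, and the resulting class is well defined.

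Finally, since the construction of the edge homomorphism is compatible with base change of holomorphic families and a differentiable trivialisation pulls back to one, the displayed description of $\rho_s$ is functorial as well; and, as noted in the text, only the first infinitesimal neighborhood of $X$ in $\cX$ intervenes, because $\db v|_X$ depends on the chosen splitting only through its restriction to $X$. Altogether this identifies $\rho_s(\pt/\pt s)$ with $[B^\alpha_{\;\ol\beta}(z,s)\,\pt_\alpha\,dz^{\ol\beta}]$.
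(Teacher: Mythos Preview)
Your proposal is correct and follows the same approach as the paper: the lemma is stated there as a summary of the preceding discussion, which sketches exactly the steps you carry out in detail---identify $\rho_s$ with the connecting homomorphism for the restricted sequence, compute it via the Dolbeault resolution, and use the lift $\pt/\pt s + b^\alpha\pt_\alpha$ coming from a differentiable trivialization. Your write-up simply makes explicit the base-change compatibility and the well-definedness of the class that the paper leaves as understood.
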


Let $\omega_\cX $ be the curvature form of  $(\cK_{\cX/S},h) $ like in Proposition~\ref{re:omX}. Let the base space $S$ be an open subset $W=\{(s_1,\ldots,s_m)\}\subset \C^m$ or more generally let $m$ be the embedding dimension of $S$ at a fixed point and $S$ holomorphically embedded into such a space $W$. We consider the tangent vectors $\pt / \pt s^k$. Now
$$
\omega_\cX = \ii\left(g_{\alpha \ol \beta}dz^\alpha\we dz^{\ol \beta} + g_{\alpha \ol \jmath }dz^\alpha\we ds^{\ol \jmath} + g_{i \ol \beta} ds^i \we dz^{\ol \beta} + g_{i \ol \jmath} ds^i \we ds^{\ol \jmath}\right)
$$

Since $\omega_\cX|\cX_s$ is equal to the \ke form on $\cX_s$, it is meaningful to define a differentiable lift
$$
u_k=\frac{\pt}{\pt s^k} + a^\alpha_k(z,s) \frac{\pt}{\pt z^\alpha}
$$
such that $u_k$ is perpendicular to the fiber $\cX_s$. It is called {\em horizontal lift} of $\pt/\pt s^k$.
\begin{lemma}\label{le:horli}
The horizontal lift of $\pt/\pt s^k$ equals
\begin{equation}\label{eq:horli}
  u_k=\frac{\pt}{\pt s^k} - g^{\ol \beta \alpha}g_{k \ol \beta}\frac{\pt}{\pt z^\alpha},
\end{equation}
i.e.\ $a^\alpha_k= - g^{\ol \beta \alpha}g_{k \ol \beta}$,
and
\begin{equation}\label{eq:Ak}
A_k:= \ol\pt (u_k)|\cX_s =A^\alpha_{k \ol \beta}(z,s)\pt_\alpha dz^\ol \beta
\end{equation}

is a representative of the \ks class $\rho(\pt/\pt s^k)$, where $A^\alpha_{k \ol \beta}= a^\alpha_{k;\ol \beta}$.
\end{lemma}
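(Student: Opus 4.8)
The plan is to compute the horizontality condition explicitly and then recognize the right-hand side of the resulting equation as a $\ol\pt$-closed form on the fiber. First I would write out what it means for the lift $u_k = \pt/\pt s^k + a_k^\alpha \pt/\pt z^\alpha$ to be perpendicular to the fiber $\cX_s$ with respect to $\omega_\cX$. Since $\omega_\cX|\cX_s = \omega_{\cX_s}$ is nondegenerate on the fiber, the orthogonal complement of $\cT_{\cX_s}$ inside $\cT_\cX|\cX_s$ is one-(complex-)dimensional at each point over a coordinate patch of $S$, and $u_k$ is characterized by $\omega_\cX(u_k, \ol{\pt_\beta}) = 0$ for all $\beta = 1,\dots,n$. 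Plugging the coordinate expression of $\omega_\cX$ from the displayed formula into this pairing gives $g_{\alpha\ol\beta} a_k^\alpha + g_{k\ol\beta} = 0$, hence $a_k^\alpha = -g^{\ol\beta\alpha} g_{k\ol\beta}$, which is the asserted formula \eqref{eq:horli}. This is the computational heart, and it is short.

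Next I would address \eqref{eq:Ak}, namely that $A_k = \ol\pt(u_k)|\cX_s = a^\alpha_{k;\ol\beta}\,\pt_\alpha\, dz^{\ol\beta}$ represents the \ks class $\rho(\pt/\pt s^k)$. For this I invoke Lemma~\ref{le:ks}: any differentiable lift of $\pt/\pt s^k$ of the form $\pt/\pt s^k + b^\alpha(z,s)\pt/\pt z^\alpha$ produces, by restricting $\ol\pt$ of the lift to the fiber, a $\ol\pt$-closed $T_{\cX_s}$-valued $(0,1)$-form whose cohomology class is precisely $\rho(\pt/\pt s^k)$, and this class is independent of the chosen lift. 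Since the horizontal lift $u_k$ is such a lift, $A_k = \ol\pt(u_k)|\cX_s$ is automatically a representative of the \ks class. The only remaining point is to verify that on the fiber the ordinary $\ol\pt$-derivative $a^\alpha_{k|\ol\beta}$ may be replaced by the covariant derivative $a^\alpha_{k;\ol\beta}$: this holds because the Christoffel symbols $\Gamma^\alpha_{\gamma\ol\beta}$ of the Kähler metric vanish (the mixed Christoffel symbols are zero for a Kähler metric), so $a^\alpha_{k;\ol\beta} = \pt_{\ol\beta} a^\alpha_k = a^\alpha_{k|\ol\beta}$ identically, and the semicolon notation is merely a convenient bookkeeping device that will matter in later computations.

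I expect the main (though still modest) obstacle to be purely notational care: one must be consistent about the placement of holomorphic versus antiholomorphic indices in $g^{\ol\beta\alpha}$, about the sign conventions in $\omega_\cX = \ii g_{\alpha\ol\beta} dz^\alpha\we dz^{\ol\beta}$ and in the pairing of a $(1,1)$-form with vector fields, and about the fact that $g_{k\ol\beta}$ denotes the mixed component $g_{k\ol\beta} = \omega_\cX(\pt/\pt s^k, \ol{\pt_\beta})$ appearing in the coordinate expansion of $\omega_\cX$. Once these conventions are pinned down, both \eqref{eq:horli} and \eqref{eq:Ak} follow immediately; there is no deep difficulty, and the lemma is essentially a definition-unwinding together with one standard Kähler-geometry fact.
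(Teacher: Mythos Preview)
Your proposal is correct and follows exactly the approach of the paper: the paper's proof simply states that the first claim follows from a ``simple calculation'' and the second from Lemma~\ref{le:ks} and its preceding discussion, which is precisely what you spell out in detail. Your remark that $a^\alpha_{k;\ol\beta}=a^\alpha_{k|\ol\beta}$ because the mixed Christoffel symbols of a K\"ahler metric vanish is a helpful clarification the paper leaves implicit.
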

\begin{proof}
  The first statement follows from a simple calculation. The second claim follows from Lemma~\ref{le:ks} and its preceding discussion.
\end{proof}

\begin{lemma}\label{le:harmks}
  The \ks form $A_k$ is harmonic on $\cX_s$ with respect to the \ke form on $\cX_s$, i.e.\
  \begin{equation}\label{eq:harm}
  \ol\pt^*(A_k)=0.
  \end{equation}
  The induced contravariant tensors satisfy
  \begin{equation}\label{eq:symmA}
    A_{k \ol  \beta \ol \delta}= A_{k \ol \delta \ol \beta}.
  \end{equation}
\end{lemma}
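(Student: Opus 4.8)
The plan is to exploit the fact that the horizontal lift $u_k$ was \emph{defined} by the pointwise orthogonality condition $\omega_\cX(u_k, \overline{\partial_\gamma}) = 0$ on each fiber, which is exactly the statement $g_{k\ol\gamma} + a^\alpha_k g_{\alpha\ol\gamma} = 0$ used in Lemma~\ref{le:horli}. First I would write $A^\alpha_{k\ol\beta} = a^\alpha_{k;\ol\beta}$ and compute $\ol\partial^*A_k$ in terms of the \ke metric: up to a constant, $(\ol\partial^*A_k)^\alpha = -g^{\ol\beta\gamma} A^\alpha_{k\ol\beta;\gamma} = -g^{\ol\beta\gamma} a^\alpha_{k;\ol\beta\gamma}$. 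The idea is to commute covariant derivatives, converting $a^\alpha_{k;\ol\beta\gamma}$ into $a^\alpha_{k;\gamma\ol\beta}$ plus a curvature term $R^\alpha{}_{\delta\gamma\ol\beta}\, a^\delta_k$ (there is no torsion, and on a \ka manifold the only surviving curvature contraction here is of mixed type). Tracing with $g^{\ol\beta\gamma}$, the curvature term becomes a Ricci term, and here is where the \ke hypothesis $R_{\delta\ol\beta} = -g_{\delta\ol\beta}$ enters: it produces exactly $+a^\alpha_k$ (with the sign dictated by $\mathrm{Ric} = -\omega$). On the other hand, $g^{\ol\beta\gamma} a^\alpha_{k;\gamma\ol\beta}$ is the "rough Laplacian" applied to $a^\alpha_k$, and I would relate it to $\Box$ acting on the vector field via the relevant Bochner--Kodaira identity on $\cX_s$.

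The cleaner route, which I would actually pursue, is to use $a^\alpha_k = -g^{\ol\beta\alpha} g_{k\ol\beta}$ directly. Recall from Proposition~\ref{re:omX} that $\omega_\cX = \idb\log(\omega_{\cX/S}^n)$ is \emph{closed} on the total space $\cX$. Closedness of $\omega_\cX$ gives the symmetry relations among its components; in particular $\partial_{s^k} g_{\alpha\ol\beta} = \partial_\alpha g_{k\ol\beta}$ and $\partial_{\ol\beta} g_{k\ol\gamma} = \partial_{\ol\gamma} g_{k\ol\beta}$ as forms on the total space. The second of these, after one recalls that $g_{k\ol\gamma}$ restricted to a fiber is just a $(0,1)$-form with values in functions, already gives the symmetry $A_{k\ol\beta\ol\delta} = A_{k\ol\delta\ol\beta}$ in \eqref{eq:symmA}: lowering the index, $A_{k\ol\delta\ol\beta} = a^\alpha_{k;\ol\beta} g_{\alpha\ol\delta}$, and a short computation using $a^\alpha_k = -g^{\ol\gamma\alpha} g_{k\ol\gamma}$ together with the closedness identity turns this into something manifestly symmetric in $\ol\beta,\ol\delta$. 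For harmonicity, I would differentiate $g^{\ol\beta\alpha}g_{k\ol\beta}$ covariantly, use $g_{\alpha\ol\beta;\gamma} = 0$ (the metric is parallel along the fiber) so that only $g_{k\ol\beta;\gamma}$ survives, invoke the Bianchi-type identity $g_{k\ol\beta;\gamma} = g_{k\ol\gamma;\beta}$ coming from $d\omega_\cX = 0$, and then trace; the \ke equation $R_{\gamma\ol\beta} = -g_{\gamma\ol\beta}$ is what makes the trace collapse to zero rather than to a nonzero Ricci contribution.

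The main obstacle, and the step that needs care rather than cleverness, is the bookkeeping when passing between quantities on the total space $\cX$ and their restrictions to a fiber $\cX_s$: the identities from $d\omega_\cX = 0$ involve derivatives $\partial_{s^k}$ in the base direction, and one must be careful that $g_{k\ol\beta}$ is \emph{not} a tensor on the fiber but becomes the relevant object only after the covariant-derivative manipulation, and that the horizontal-lift coefficient $a^\alpha_k$ involves $s$-derivatives of the fiber metric. Keeping track of which indices are being raised or lowered with the fiber metric $g_{\alpha\ol\beta}$, and ensuring that the Christoffel symbols introduced by $\nabla_\gamma$ are exactly those of the \ke metric on $\cX_s$, is where a sign error is easy to make; once the correct commutation identity is in hand, both \eqref{eq:harm} and \eqref{eq:symmA} drop out, and the \ke normalization $-\mathrm{Ric} = \omega$ is used in an essential way only for \eqref{eq:harm}.
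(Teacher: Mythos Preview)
Your plan is correct and follows essentially the same route as the paper: write $A^\alpha_{k\ol\beta}=-g^{\ol\delta\alpha}g_{k\ol\delta;\ol\beta}$, compute $\ol\pt^*A_k$ by commuting covariant derivatives to produce a Ricci term, and use the \ke condition $R_{\alpha\ol\beta}=-g_{\alpha\ol\beta}$ to force the cancellation; the symmetry \eqref{eq:symmA} drops out of $\partial_{\ol\beta}g_{k\ol\delta}=\partial_{\ol\delta}g_{k\ol\beta}$ exactly as you say. The one point where the paper is sharper than your sketch is the identification of the ``main'' term after commutation: rather than invoking a Bochner--Kodaira identity, the paper uses directly that $\omega_\cX=\idb\log g$ with $g=\det(g_{\alpha\ol\beta})$, so that $g^{\ol\beta\gamma}g_{k\ol\beta;\gamma}=\partial_{s^k}\log g$ and hence $(\partial_{s^k}\log g)_{;\ol\delta}=g_{k\ol\delta}$, which then cancels against the Ricci contribution $-g_{k\ol\delta}$; your ``Bianchi-type identity $g_{k\ol\beta;\gamma}=g_{k\ol\gamma;\beta}$'' has mismatched bar-types as written and should be replaced by this potential argument (or by the closedness relation $\partial_\gamma g_{k\ol\beta}=\partial_{s^k}g_{\gamma\ol\beta}$, which amounts to the same thing).
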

\begin{proof}
  We set $g(z,s) = \det(g_{\sigma \ol \tau})$ so that $\omega_\cX = \idb \log g(z,s)$. By \eqref{eq:omX}, and Lemma~\ref{le:horli}
\begin{eqnarray*}
  \ol\pt^*\! (A_k)&=& - g^{\ol \beta \gamma}A^\alpha_{k \ol \beta; \gamma } \pt_\alpha =  g^{\ol \beta \gamma} g^{\ol \delta \alpha} g_{k \ol \delta; \ol \beta\gamma} = g^{\ol \beta \gamma} g^{\ol \delta \alpha}\left( g_{k \ol \beta; \gamma \ol \delta} - g_{k \ol \tau} R^\ol\tau_{\;\ol \beta\ol \delta\gamma}   \right)\\ &=&
  g^{\ol \delta\alpha}\left(\left(\frac{\pt}{\pt s^k}   \log \det(g_{\gamma\ol \beta})\right)_{;\ol \delta }  + g_{k\ol \tau} R^\ol\tau_{\; \ol \delta} \right)=g^{\ol \delta\alpha}( g_{k\ol \delta}-g_{k\ol \delta})=0
\end{eqnarray*}
The symmetry of $A_{k\ol \beta\ol \delta}$ follows from \eqref{eq:horli} and \eqref{eq:Ak}.
\end{proof}
We saw that the harmonicity of $A_k$ can be interpreted as  infinitesimal \ke condition.

The \ke metrics on the fibers of a holomorphic family over a spaces $S$ define a hermitian metric for harmonic \ks forms, which induces semi-positive hermitian inner products on the tangent spaces of the base $S$. It follows from the construction that these inner products are compatible with base change and strictly positive for a universal deformation.

As above $s$ stands for a coordinate function of a parameter space, and $\pt/\pt s$ for a general tangent vector.
\begin{definition}
  Let $\cX \to S$ be a holomorphic family of canonically polarized complex manifolds. Let
  $$
  \rho_s :T_sS \to H^1(\cX_s, \cT_{\cX_s})
  $$
  be the \ks mapping, and let
  $$
  A^\alpha_{s \ol \beta}(z)\pt_\alpha dz^{\ol \beta}
  $$
  be the harmonic representative of $\rho_s(\pt/\pt s)$. Then the \wp norm is defined to be
  $$
  \|\pt / \pt s\|^2_{WP} = \int_{\cX_s} \|A_s\|^2(z) g\, dV =  \int_{\cX_s} A^\alpha_{k \ol \beta} \ol A^{\ol \delta}_{\ol \ell \gamma} g_{\alpha\ol \delta}g^{\ol \beta\gamma} g \, dV = \int_{\cX_s} A^ \alpha_{k \ol \beta} \ol A^{\ol \beta}_{\ol \ell \alpha} g \, dV.
  $$
\end{definition}
(The last equality in the above definition follows from \eqref{eq:symmA}.)

We will use the technique of fiber integration of differential forms. Given a smooth proper holomorphic map $f:\cX \to S$ of complex manifolds with fibers of complex dimension $n$, and $\eta$ a $\cinf$ differential form of degree $2n+r$, the fiber integral
$$
\int_{\cX/S} \eta
$$
is a differential form of degree $r$ and class $\cinf$. The extension of fiber integration to currents is the push-forward.

Again, this construction is compatible with base change. For trivial (differentiable) families the explicit construction is obvious, and one can use a differentiable trivialization otherwise.

The construction is type preserving: If $\eta$ is of type $(n+r,n+s)$, then the fiber integral is of type $(r,s)$. Furthermore, it commutes with the exterior derivatives $d$, $\pt$, and $\ol\pt$.

For smooth families over reduced base spaces the forms $\eta$ are assumed to be locally extendable as $\cinf$ forms to smooth ambient spaces: Locally $f$ is the projection $U \times S \to S$, where $U$ is smooth, and $S\subset W$, where $W$ is an open subset of a complex number space. In this way, the fiber integral of $\eta$ can be provided with a local $\cinf$ extension to a smooth space.
\begin{theorem}
  The \wp hermitian inner product on the tangent spaces defines a \ka form $\omega^{WP}$ on the base spaces of universal deformations.  The \ka form $\omega^{WP}$ is invariant under the action of the automorphism group of the distinguished fiber, and thus descends to the moduli space.

  Locally on the bases spaces of deformations the form $\omega^{WP}$ possesses a $\pt\ol\pt$-potential that is of class $\cinf$ on a smooth ambient space. Such a potential descends to the moduli space as a plurisubharmonic continuous function.
\end{theorem}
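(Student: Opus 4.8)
The plan is to realise $\omega^{WP}$ as a fibre integral over $\cX$, to read off a $\pt\ol\pt$-potential from it, and then to carry everything down to $\cM$. The starting point is the known fibre integral representation of the \wp form,
$$
\omega^{WP}=\int_{\cX/S}\omega_\cX^{\,n+1}
$$
(with the convention $\eta^k=\eta^{\we k}/k!$, up to a fixed positive constant), where $\omega_\cX=\idb\log(\omega_{\cX/S}^n)$ is the form of Proposition~\ref{re:omX}; this is obtained (cf.\ \cite{f-s,Sch1}) by evaluating $\omega_\cX$ on pairs of horizontal lifts from Lemma~\ref{le:horli}, using $\ol\pt u_i|\cX_s=A_i$ and the harmonicity of the \ks forms (Lemma~\ref{le:harmks}) to identify the fibre integrand with the \wp density. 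Since $\omega_\cX$ is closed and fibre integration commutes with $d$, the form $\omega^{WP}$ is closed; together with the strict positivity of the \wp inner product on the base of a universal deformation recorded above, this makes $\omega^{WP}$ a \ka form there. It is $Aut(X)$-invariant because the \ke fibre metrics are unique (Yau), so that $\omega_{\cX/S}^n$, $\omega_\cX$, and hence the fibre integral, are equivariant for the $Aut(X)$-action on $(S,s_0)$ of Proposition~\ref{pr:univdef}; being also compatible with base change, the forms on the various base spaces of universal deformations glue and descend to $\cM$ in the orbifold sense.

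\emph{The potential.} Fix $s_0\in S$, and after shrinking $S$ choose a reference hermitian metric $\wt h$ on $\cK_{\cX/S}$ as in Proposition~\ref{pr:relka}, with curvature form $\wt\omega_\cX$; then $\omega_\cX=\wt\omega_\cX+\idb\varphi$ with $\varphi$ the globally $\cinf$ \ke potential produced there. The telescoping identity $\alpha^{\we(n+1)}-\beta^{\we(n+1)}=\idb\varphi\we\sum_{j=0}^{n}\alpha^{\we j}\we\beta^{\we(n-j)}$ for commuting closed $(1,1)$-forms with $\alpha=\beta+\idb\varphi$, the fact that $\int_{\cX/S}\wt\omega_\cX^{\,n+1}$ is a closed $(1,1)$-form whose class vanishes over a contractible base and hence equals $\idb\Psi_0$ for a $\cinf$ function $\Psi_0$ by the $\pt\ol\pt$-lemma for $d$-exact $(1,1)$-forms on a ball, and the commutation of fibre integration with $\idb$, together give near $s_0$
$$
\omega^{WP}=\idb\Phi,\qquad \Phi=\Psi_0+\int_{\cX/S}\varphi\sum_{j=0}^{n}\omega_\cX^{\we j}\we\wt\omega_\cX^{\we(n-j)},
$$
which is of class $\cinf$. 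If $S$ is singular or non-reduced, the same manipulations are carried out over the smooth ambient space $W$ of Proposition~\ref{pr:relka} --- over which the relative canonical bundle, the operators $\pt,\ol\pt$, and $\varphi$ all extend --- so that $\Phi$ becomes the restriction to $S$ of a $\cinf$ function on $W$, as required.

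\emph{Descent as a plurisubharmonic function.} Replacing $\Phi$ by its average over $Aut(X)$, which is legitimate because $\idb\Phi=\omega^{WP}$ is already $Aut(X)$-invariant, I may assume $\Phi$ is invariant, so that it descends to a continuous function on the local chart $S/Aut(X)$ of $\cM$. Plurisubharmonicity is then checked disc-wise: for a holomorphic disc $h\colon\Delta\to S$ one has $\idb(h^*\Phi)=h^*\omega^{WP}$, which is the \wp form of the pulled-back family $h^*\cX\to\Delta$ and hence a semi-positive $(1,1)$-form, since the \wp inner product is semi-positive for every family; thus $h^*\Phi$ is subharmonic, $\Phi$ is plurisubharmonic on $S$, and this passes to $S/Aut(X)$ because subharmonicity survives the finite branched covering $t\mapsto t^k$. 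Gluing over charts produces the desired continuous plurisubharmonic function on $\cM$.

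\emph{Main difficulty.} The genuinely delicate points, where I expect the main work to lie, are the treatment of a singular or non-reduced base --- carrying the potential onto a smooth ambient space as above --- and, hand in hand with it, the conclusion that the descended function is honestly plurisubharmonic on the possibly singular space $\cM$; this is precisely why one must argue disc-wise from semi-positivity of the \wp form for arbitrary families rather than simply invoke $\idb\Phi\ge 0$ on an ambient ball.
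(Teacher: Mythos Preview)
Your proposal is correct and follows essentially the same route as the paper: invariance from the uniqueness of the \ke metric, and the existence of a local $\cinf$ potential from the fibre integral formula $\omega^{WP}=\int_{\cX/S}\omega_\cX^{n+1}$ together with the fact that $\omega_\cX$ (and hence $\omega^{WP}$) has local $\idb$-potentials extending to a smooth ambient space. The paper's own proof is considerably terser---it simply points to the fibre integral formula \eqref{eq:fibint} below and observes that local $\idb$-potentials for $\omega_\cX$ extend $\cinf$ to an ambient space, hence so do those for $\omega^{WP}$; in particular it does not spell out the descent of the potential as a continuous psh function.

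Where you differ is in making the potential explicit via the telescoping identity and a reference curvature form $\wt\omega_\cX$, and in supplying a disc-wise argument for plurisubharmonicity of the averaged potential on the quotient $S/Aut(X)$. These elaborations are sound (your use of $\idb(\varphi\,\eta)=\idb\varphi\wedge\eta$ for a $d$-closed real $(n,n)$-form $\eta$ is correct, since $d\eta=0$ forces $\pt\eta=\ol\pt\eta=0$ by type), and they fill in steps the paper leaves to the reader. The only point worth tightening is the extension to the smooth ambient space $W$ in the singular case: over $W$ the almost complex structures need not be integrable, so ``$\pt$'' and ``$\ol\pt$'' are merely the extended operators of Proposition~\ref{pr:relka}, and the local $\idb$-lemma you invoke for $\Psi_0$ should be applied on $W$ as an ordinary ball in $\C^m$---this is what the paper means by saying the potentials are ``of class $\cinf$ on a smooth ambient space'', and your argument is compatible with that reading.
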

The notion of a \ks map is meaningful for families over non-reduced base spaces, and non-reducedness may also occur for universal deformations. In principle the statement of Proposition~\ref{pr:relka} can also be given a meaning for non-reduced base spaces. However, moduli spaces are usually provided with a reduced complex structure. In this sense, we restrict ourselves to reduced parameter spaces at the expense that the \wp inner product may be defined on larger tangent spaces.

\begin{proof}[Proof of the Theorem]
  Since \ke metrics of constant Ricci curvature equal to $-1$ are unique and compatible with biholomorphic mappings, it follows immediately form the definition that the \wp metric is invariant under the action of the automorphism group of the central fiber in a (local) deformation and descends to the moduli space.

  The second part of the Theorem follows from the fiber integral formula \eqref{eq:fibint} below. Note that $\omega_\cX$  possesses local $\pt \ol \pt$-potentials of class $\cinf$ that extend to local, smooth ambient spaces in a $\cinf$ way -- a property that  also holds for $\omega^{WP}$ because of \eqref{eq:fibint}.
  \end{proof}

Given a universal deformation $\cX \to S$, we denote by ${\omega_\cX}$ the form as in \eqref{eq:omX} of Proposition~\ref{re:omX}.

\begin{theorem}[\cite{f-s}]\label{th:fibint}
  The \wp form is given by a fiber integral
  \begin{equation}\label{eq:fibint}
    \omega^{WP}= \int_{\cX/S} \omega^{n+1}_\cX.
  \end{equation}
\end{theorem}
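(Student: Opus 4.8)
The plan is to evaluate the fiber integral explicitly along the $\omega_\cX$-horizontal lifts of Lemma~\ref{le:horli}. Fix a point of $S$ and work, as in Proposition~\ref{pr:relka}, on a smooth ambient space with coordinates $s^1,\dots,s^m$; let $u_k=\pt/\pt s^k-g^{\ol\beta\alpha}g_{k\ol\beta}\,\pt_\alpha$ be the horizontal lift of $\pt/\pt s^k$, so that by construction $\omega_\cX(u_k,\pt_{\ol\beta})=\ii(g_{k\ol\beta}+a^\alpha_k g_{\alpha\ol\beta})=0$. Put $\varphi_{k\ol\ell}:=\omega_\cX(u_k,\ol u_\ell)/\ii$, the geodesic curvature of the horizontal distribution. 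Since $\omega_\cX$ is of type $(1,1)$ and vanishes on $(u_k,\pt_{\ol\beta})$, the one-form $\iota_{u_k}\omega_\cX$ annihilates every $(1,0)$-vector and every vertical $(0,1)$-vector, hence $\iota_{u_k}\omega_\cX=\ii\,\varphi_{k\ol\ell}\,ds^{\ol\ell}$; likewise $\iota_{\ol u_\ell}\omega_\cX$ is a one-form pulled back from the base. Using $\iota_V(\eta^{q})=(\iota_V\eta)\we\eta^{q-1}$ for the convention $\eta^{q}=\eta^{\we q}/q!$, one gets
$$
\iota_{\ol u_\ell}\iota_{u_k}\,\omega_\cX^{n+1}\;=\;\ii\,\varphi_{k\ol\ell}\,\omega_\cX^{n}\;-\;\ii\,\varphi_{k\ol m}\,ds^{\ol m}\we\big(\iota_{\ol u_\ell}\omega_\cX\big)\we\omega_\cX^{n-1},
$$
and the second summand, carrying two base one-forms, vanishes on restriction to a fibre. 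Since $\omega_\cX^{n}|_{\cX_s}=\omega_{\cX_s}^{n}=g\,dV$, fiber integration gives
$$
\int_{\cX/S}\omega_\cX^{n+1}\;=\;\ii\left(\int_{\cX_s}\varphi_{k\ol\ell}(\cdot,s)\,g\,dV\right)ds^{k}\we ds^{\ol\ell}.
$$

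Comparing with the definition of the \wp metric, whose associated form is $\omega^{WP}=\ii\big(\int_{\cX_s}A^\alpha_{k\ol\beta}\ol A^{\ol\beta}_{\ol\ell\alpha}\,g\,dV\big)\,ds^{k}\we ds^{\ol\ell}$, the statement is reduced to the fibrewise identity
$$
\int_{\cX_s}\varphi_{k\ol\ell}\,g\,dV\;=\;\int_{\cX_s}A_k\cdot\ol A_\ell\,g\,dV,
$$
where $A_k\cdot\ol A_\ell=A^\alpha_{k\ol\beta}\ol A^{\ol\beta}_{\ol\ell\alpha}$ is the pointwise inner product of harmonic \ks forms.

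The substance of the theorem — and the step I expect to be the \emph{main obstacle} — lies in this identity, which is where the \ke equation enters. I would deduce it from the pointwise relation
$$
(\Box+1)\,\varphi_{k\ol\ell}\;=\;A_k\cdot\ol A_\ell\qquad\text{on }\cX_s,
$$
$\Box$ being the nonnegative Laplacian of $\omega_{\cX_s}$ on functions. This relation is obtained by differentiating the fibrewise \ke equation $\pt_\alpha\pt_{\ol\beta}\log\det\big(g_{\gamma\ol\delta}(z,s)\big)=g_{\alpha\ol\beta}(z,s)$ — equivalently $R_{\alpha\ol\beta}=-g_{\alpha\ol\beta}$ — twice in the base directions, rewriting the base-derivatives of $g_{\gamma\ol\delta}$ through covariant derivatives on the fibre and commutators $[\nabla_\alpha,\nabla_{\ol\beta}]$, and then using the harmonicity $\dbs A_k=0$ and the symmetry $A_{k\ol\beta\ol\delta}=A_{k\ol\delta\ol\beta}$ of Lemma~\ref{le:harmks} to collapse everything to $A_k\cdot\ol A_\ell$; the \ke normalisation is exactly what converts the surviving curvature contraction into the constant $+1$, so that $\varphi_{k\ol\ell}=(\Box+1)^{-1}(A_k\cdot\ol A_\ell)$ — precisely the quantity appearing in the curvature formula of Theorem~\ref{th:curvgen}. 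Granting the pointwise relation, integration over the compact fibre together with $\int_{\cX_s}\Box\psi\,g\,dV=0$ yields $\int_{\cX_s}\varphi_{k\ol\ell}\,g\,dV=\int_{\cX_s}(\Box+1)\varphi_{k\ol\ell}\,g\,dV=\int_{\cX_s}A_k\cdot\ol A_\ell\,g\,dV$, as required.

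Finally, for reduced but possibly singular parameter spaces $S$ one argues as in Proposition~\ref{pr:relka}: the relative \ke metric, the form $\omega_\cX$ and hence $\omega_\cX^{n+1}$ extend $\cinf$-ly to a smooth ambient space of $S$ near the chosen point, and fiber integration commutes with this extension; the identity $\omega^{WP}=\int_{\cX/S}\omega_\cX^{n+1}$ of $(1,1)$-forms, once proved on the ambient space, therefore restricts to $S$.
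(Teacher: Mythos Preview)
Your proof is correct and follows essentially the same route as the paper: the fibre integral is identified with $\ii\big(\int_{\cX_s}\varphi_{k\ol\ell}\,g\,dV\big)ds^k\we ds^{\ol\ell}$ (the paper does this via the determinant identity \eqref{eq:det}, you via contractions with the horizontal lifts---both amount to the linear-algebra fact recorded as \eqref{eq:oxnpl1}), and then the elliptic equation $(\Box+1)\varphi_{k\ol\ell}=A_k\cdot\ol A_\ell$, which is exactly Proposition~\ref{pr:laplphi}, is integrated over the fibre. Your treatment of singular bases also matches the paper's.
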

  Because of the base change property for $\omega^{WP}$ and $\omega_\cX$, it is sufficient to prove \eqref{eq:fibint} for spaces of dimension one, when dealing with smooth base spaces. Since we need to include singular base spaces we reduce the statement to embedding dimension one.

  Let $s$ be a coordinate function for $S$, and we use $s$ also as index for this function and for the restriction of the form $\omega_\cX$, whose positive definiteness is to be shown. The following facts can be verified by an obvious calculation.
\begin{lemma}
Let $u_s$ be the horizontal lift of $\pt/\pt s$, and let
$$
\varphi=\varphi_{s \ol s}= \langle u_s ,u_s \rangle_{\omega_\cX} .
$$
Then
\begin{equation}\label{eq:phi}
  \varphi_{s\ol s}= g_{s\ol s}- g^{\ol \beta \alpha}g_{s \ol \beta}g_{\alpha \ol s},
\end{equation}
and $\varphi_{s\ol s}$ is related to the determinant of an $(n+1)\times (n+1)$-matrix:
\begin{equation}\label{eq:det}
  \varphi_{s\ol s}\cdot \det(g_{\alpha\ol \beta}) = \det
\left(
\begin{array}{cc}
g_{s\ol s} & g_{s\ol\beta}\\ g_{\alpha\ol s}& \gab
\end{array}
\right).
\end{equation}
\end{lemma}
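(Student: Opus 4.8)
The plan is to verify both displayed identities by a short piece of linear algebra, using only the explicit expansion of $\omega_\cX$ recorded just before the Lemma together with formula \eqref{eq:horli} for the horizontal lift; no new ideas are needed, since by the base-change property it suffices to treat embedding dimension one, i.e.\ a single parameter $s$.

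First I would observe that, in the basis $\pt/\pt s,\pt/\pt z^1,\ldots,\pt/\pt z^n$ of the holomorphic tangent space of $\cX$, the Hermitian form $\langle\,\cdot\,,\,\cdot\,\rangle_{\omega_\cX}$ has Gram matrix
\[
G=\begin{pmatrix} g_{s\ol s} & g_{s\ol\beta}\\ g_{\alpha\ol s}& \gab\end{pmatrix},
\]
which is exactly the $(n+1)\times(n+1)$-matrix on the right-hand side of \eqref{eq:det}; this is simply reading off the coefficients in $\omega_\cX=\ii(g_{\alpha\ol\beta}dz^\alpha\we dz^{\ol\beta}+g_{\alpha\ol s}\,dz^\alpha\we d\ol s+g_{s\ol\beta}\,ds\we dz^{\ol\beta}+g_{s\ol s}\,ds\we d\ol s)$. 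Writing $u_s=\pt/\pt s+a^\alpha\,\pt/\pt z^\alpha$ with $a^\alpha=-\gba g_{s\ol\beta}$ by \eqref{eq:horli}, the value $\varphi_{s\ol s}=\langle u_s,u_s\rangle_{\omega_\cX}$ expands into the four terms
\[
\varphi_{s\ol s}=g_{s\ol s}+a^\alpha g_{\alpha\ol s}+\ol a^{\ol\beta}\,g_{s\ol\beta}+a^\alpha\ol a^{\ol\beta}\,\gab .
\]
Substituting $a^\alpha=-\gba g_{s\ol\beta}$ and contracting via $\gba\gab=\delta$, the two linear terms each reduce to $-\gba g_{s\ol\beta}g_{\alpha\ol s}$, while the quadratic term becomes $+\gba g_{s\ol\beta}g_{\alpha\ol s}$; adding the three contributions yields precisely \eqref{eq:phi}. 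This is the only computation in the proof, and it is entirely routine — the sole point needing a little care is the consistent conjugation of the indices of $a^\alpha$.

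For \eqref{eq:det} I would invoke the Schur complement (block determinant) identity: since $\gab$ is invertible (the \ke form is positive definite), expansion of $\det G$ about its lower right $n\times n$ block gives
\[
\det G=\det(\gab)\cdot\bigl(g_{s\ol s}-g_{s\ol\beta}\,\gba\,g_{\alpha\ol s}\bigr),
\]
and the factor in parentheses equals $\varphi_{s\ol s}$ by the part just proved. This is \eqref{eq:det}; as a by-product it re-proves that $\varphi_{s\ol s}>0$, since $\det G>0$ and $\det(\gab)>0$. I do not anticipate any genuine obstacle here; the only thing to keep in mind is that $a^\alpha$ is merely a differentiable, not holomorphic, function of $(z,s)$, so throughout these are pointwise identities between smooth functions, which is all that is claimed.
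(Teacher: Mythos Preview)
Your proof is correct and is exactly the ``obvious calculation'' the paper alludes to without spelling out; the expansion of $\langle u_s,u_s\rangle_{\omega_\cX}$ via \eqref{eq:horli} and the Schur complement identity are the natural (and only) ingredients, and you have carried them out cleanly.

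One caveat on your parenthetical remark: you assert that \eqref{eq:det} ``re-proves $\varphi_{s\ol s}>0$ since $\det G>0$''. At this stage of the paper $\omega_\cX$ is \emph{not} known to be positive definite on the total space $\cX$; only its fiberwise restrictions are. The strict positivity of $\varphi_{s\ol s}$ (equivalently of $\omega_\cX$) is precisely the content of the theorem a few lines below, and its proof requires the elliptic equation \eqref{eq:laplphi} together with the maximum principle for $\Box_s+1$. So drop that aside---it is circular, and in any case not part of the Lemma.
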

A crucial property is the following differential equation. In this way it is possible to eliminate second order derivatives with respect to the parameter space from $\varphi$.
\begin{proposition}\label{pr:laplphi}
Denote by $\Box_s=\Box_{\cX_s}$ the Laplacian on fibers with respect to $\omega_{\cX_s}$. Then
  \begin{equation}\label{eq:laplphi}
   (1 + \Box_s)(\varphi_{s\ol s}(z,s)) = \|A_s\|^2(z,s).
  \end{equation}
\end{proposition}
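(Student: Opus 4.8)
The statement is pointwise on $\cX_s$ and both sides are intrinsic functions, so the plan is to fix a point $p\in\cX_s$ and verify the identity there, working in a holomorphic chart $(z^1,\dots,z^n,s)$ of $\cX$ in which $(z^\alpha)$ are \ka-normal coordinates for $\omega_{\cX_s}$ at $p$ (so $g_{\alpha\ol\beta}(p)=\delta_{\alpha\beta}$ and $\pt_\gamma g_{\alpha\ol\beta}(p)=0$). The structural input is Proposition~\ref{re:omX}, which in such a chart reads $\omega_\cX=\idb\log\det(g_{\alpha\ol\beta}(z,s))$, so that $g_{p\ol q}=\pt_p\pt_{\ol q}\log\det(g_{\alpha\ol\beta})$ for all indices $p,q\in\{s,1,\dots,n\}$. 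I would use two consequences: the \ke equation $g_{\alpha\ol\beta}=\pt_\alpha\pt_{\ol\beta}\log\det(g_{\gamma\ol\delta})$, which is an identity in $(z,s)$ and may therefore be differentiated in the parameter, and the commutation relations coming from $d\omega_\cX=0$, in particular $\pt_\gamma g_{s\ol\beta}=\pt_s g_{\gamma\ol\beta}=:\dot g_{\gamma\ol\beta}$. By \eqref{eq:phi} one has $\varphi_{s\ol s}=g_{s\ol s}-\|a_s\|^2$ with $a^\alpha_s=-g^{\ol\beta\alpha}g_{s\ol\beta}$ and $\|a_s\|^2=g_{\alpha\ol\beta}a^\alpha_s\ol a^{\ol\beta}_s$, and the plan is to evaluate $(1+\Box_s)$ on the two summands separately; neither summand is a global function on $\cX_s$, but this is harmless, since the computation stays in one chart and the outcome is chart independent.

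First I would show $(1+\Box_s)g_{s\ol s}=-\|\dot g\|^2$, where $\|\dot g\|^2:=g^{\ol\nu\mu}g^{\ol\beta\alpha}\dot g_{\mu\ol\beta}\,\ol{\dot g_{\nu\ol\alpha}}\ge 0$. Differentiating the \ke equation twice in the base gives $\pt_\alpha\pt_{\ol\beta}g_{s\ol s}=\pt_s\pt_{\ol s}g_{\alpha\ol\beta}$, hence $\Box_s g_{s\ol s}=-g^{\ol\beta\alpha}\pt_s\pt_{\ol s}g_{\alpha\ol\beta}$; and differentiating $\pt_s\log\det(g_{\alpha\ol\beta})=g^{\ol\beta\alpha}\pt_s g_{\alpha\ol\beta}$ once more in $\ol s$, using $\pt_s\pt_{\ol s}\log\det(g_{\alpha\ol\beta})=g_{s\ol s}$ and $\pt_{\ol s}g^{\ol\beta\alpha}=-g^{\ol\beta\mu}g^{\ol\nu\alpha}\pt_{\ol s}g_{\mu\ol\nu}$, gives $g^{\ol\beta\alpha}\pt_s\pt_{\ol s}g_{\alpha\ol\beta}=g_{s\ol s}+\|\dot g\|^2$. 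Together these yield the claim.

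Next I would show $(1+\Box_s)\|a_s\|^2=-\|A_s\|^2-\|\dot g\|^2$. The key preliminary is a pair of covariant-derivative identities for the horizontal-lift field: from $g_{s\ol\beta}=-g_{\mu\ol\beta}a^\mu_s$, differentiating in $z^\gamma$, using $\pt_\gamma g_{s\ol\beta}=\dot g_{\gamma\ol\beta}$ and absorbing the Christoffel symbols of $\omega_{\cX_s}$, one gets $\nabla_\gamma a^\mu_s=-g^{\ol\beta\mu}\dot g_{\gamma\ol\beta}$, while $\nabla_{\ol\beta}a^\mu_s=A^\mu_{s\ol\beta}$ is exactly \eqref{eq:Ak}; hence at $p$ the pointwise norms of $\pt a_s$ and $\ol\pt a_s$ equal $\|\dot g\|^2$ and $\|A_s\|^2$. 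Then a Bochner computation: expanding $-\Box_s\|a_s\|^2=\sum_\gamma\pt_\gamma\pt_{\ol\gamma}(g_{\mu\ol\nu}a^\mu_s\ol a^{\ol\nu}_s)$ at $p$ and discarding the terms carrying a first derivative of $g_{\mu\ol\nu}$, the surviving pieces are $\big(\sum_\gamma\pt_\gamma\pt_{\ol\gamma}g_{\mu\ol\nu}\big)a^\mu_s\ol a^{\ol\nu}_s+\sum_{\gamma,\mu}|\pt_\gamma a^\mu_s|^2+\sum_{\gamma,\mu}|\pt_{\ol\gamma}a^\mu_s|^2+2\,\mathrm{Re}\,\sum_\mu\ol a^{\ol\mu}_s\sum_\gamma\pt_\gamma\pt_{\ol\gamma}a^\mu_s$. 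The first term is $\pt_\mu\pt_{\ol\nu}\log\det(g_{\gamma\ol\delta})(p)\cdot a^\mu_s\ol a^{\ol\nu}_s=\|a_s\|^2$ by the \ka identity, the \ka-normality and the \ke equation; the last term vanishes because $\sum_\gamma\pt_\gamma\pt_{\ol\gamma}a^\mu_s(p)=\sum_\gamma\nabla_\gamma A^\mu_{s\ol\gamma}(p)=-(\dbs A_s)^\mu(p)=0$ by the harmonicity of $A_s$ (Lemma~\ref{le:harmks}). Hence $-\Box_s\|a_s\|^2=\|a_s\|^2+\|\dot g\|^2+\|A_s\|^2$, which is the claim.

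Adding, $(1+\Box_s)\varphi_{s\ol s}=-\|\dot g\|^2-\big(-\|\dot g\|^2-\|A_s\|^2\big)=\|A_s\|^2$. The step I expect to be the main obstacle is the second one: getting the covariant-derivative identities for $a_s$ exactly right (here the closedness of $\omega_\cX$ from Proposition~\ref{re:omX} is indispensable), recognising the divergence term in the Bochner expansion as $\dbs A_s$, and pinning the curvature contribution to precisely $+\|a_s\|^2$ via the \ke condition — it is this term that offsets the $1\cdot\|a_s\|^2$ produced by $1+\Box_s$, after which the $\|\dot g\|^2$ coming from the two steps cancel and only $\|A_s\|^2$ remains. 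One must also keep careful track of which intermediate quantities ($g_{s\ol s}$, $g_{s\ol\beta}$, $a_s$, $\dot g$) are chart dependent and which ($\varphi_{s\ol s}$, $A_s$) are not, but this does not affect the validity of the pointwise computation.
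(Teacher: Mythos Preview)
Your argument is correct. The paper itself does not give a detailed proof of this proposition (it only states that ``the calculation depends upon the \ke condition'' and refers to \cite{sch:curv,sch-inv}), but the computation you outline is exactly the natural one and agrees with what those references contain: split $\varphi_{s\ol s}=g_{s\ol s}-\|a_s\|^2$ using \eqref{eq:phi}, use $\omega_\cX=\idb\log\det(g_{\alpha\ol\beta})$ from Proposition~\ref{re:omX} to commute fiber and base derivatives and to obtain $\pt_\gamma g_{s\ol\beta}=\dot g_{\gamma\ol\beta}$, and invoke the \ke condition to identify the curvature contribution in the Bochner term as $+\|a_s\|^2$ and to establish $\ol\pt^*A_s=0$ (already proved as Lemma~\ref{le:harmks}). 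The cancellation of the auxiliary quantity $\|\dot g\|^2$ between the two pieces is precisely how the computation closes up, as you note.
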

\begin{proof}
  The calculation depends upon the \ke condition. Details are in \cite{sch:curv,sch-inv}.
\end{proof}
\begin{proof}[Proof of Theorem~\ref{th:fibint}]
  By Proposition~\ref{pr:laplphi} we have
  $$
  \|\pt/\pt s\|^2_{WP}= \int_{\cX_s} (1 + \Box_s)(\varphi_{s\ol s}) g\, dV =\int_{\cX_s} \varphi_{s\ol s}\, g\, dV.
  $$
  Because of \eqref{eq:det} this quantity corresponds to the fiber integral from \eqref{eq:fibint} evaluated at $\pt/\pt s$.
\end{proof}
With $\varphi_{i\ol\jmath}= \langle u_i,u_j\rangle_{\omega_\cX}$ the equation
\begin{equation}\label{eq:oxnpl1}
  \omega^{n+1}_\cX=\ii\, \varphi_{i\ol\jmath}\,ds^i\we ds^{\ol\jmath}\, g\, dV
\end{equation}
follows.

A holomorphic family is called {\em effectively parameterized}, if the \ks mapping is everywhere injective. In particular, universal families have this property.

An important consequence of Proposition~\ref{pr:laplphi} is the following fact.

\begin{theorem}[\cite{sch-preprint08,sch-inv}]
  Let $\cX \to S$ be an effectively parameterized family of canonically polarized manifolds. Endow the relative canonical bundle $\cK_{\cX/S}$ with the hermitian metric $h$ that is induced by the family of \ke volume forms. Then the bundle $(\cK_{\cX/S},h)$ has a strictly positive curvature form $\omega_\cX$, whose restrictions to the fibers $\cX_s$ are equal to the \ke forms $\cX_s$.
\end{theorem}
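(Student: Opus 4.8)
The plan is to prove that the curvature form $\omega_\cX=\idb\log(\omega_{\cX/S}^n)$ of $(\cK_{\cX/S},h)$ (cf.\ Proposition~\ref{re:omX}) is positive definite at every point of the total space; the statement about the fibrewise restrictions being the \ke forms is exactly the content of Proposition~\ref{re:omX}. I would first assume $S$ smooth of dimension $m$, treating the general reduced case afterwards by locally embedding $S$ into a smooth ambient space as in Proposition~\ref{pr:relka} and arguing with Zariski tangent spaces, since the \ks map, the horizontal lifts and the fibrewise identity below all remain available there. Fix $x\in\cX_s$. By Proposition~\ref{re:omX} the form $\omega_\cX|_{\cX_s}=\omega_{\cX_s}$ is positive definite along the fibre, so the horizontal lifts $u_1,\dots,u_m$ of the coordinate vector fields on $S$ (Lemma~\ref{le:horli}) are defined, and formula~\eqref{eq:horli} gives $\langle u_k,\pt_\gamma\rangle_{\omega_\cX}=g_{k\ol\gamma}-g^{\ol\beta\alpha}g_{k\ol\beta}g_{\alpha\ol\gamma}=0$. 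Hence $T_x\cX=T_x\cX_s\oplus\langle u_1,\dots,u_m\rangle$ is an $\omega_\cX$-orthogonal decomposition, so in this basis $\omega_\cX$ is block diagonal with blocks the positive definite \ke form $\omega_{\cX_s}$ and the Hermitian matrix $(\varphi_{i\ol\jmath}(x))$, where $\varphi_{i\ol\jmath}=\langle u_i,u_j\rangle_{\omega_\cX}$ as in~\eqref{eq:phi}. Therefore $\omega_\cX$ is positive definite at $x$ precisely when $(\varphi_{i\ol\jmath}(x))$ is, and this is all that remains (it is also what~\eqref{eq:oxnpl1} records at the level of top exterior powers).

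To prove that $(\varphi_{i\ol\jmath}(z,s))$ is positive definite at every point, fix $s$ and $0\neq v\in T_sS$, choose local coordinates on $S$ in which $v=\pt/\pt s$ at the point $s$, and write $\varphi_{v\ol v}$ for the resulting function $\langle u_v,u_v\rangle_{\omega_\cX}$ on the compact connected fibre $\cX_s$ ($u_v$ the horizontal lift of $v$) and $A_v$ for the harmonic \ks form representing $\rho_s(v)$ (Lemma~\ref{le:harmks}). Proposition~\ref{pr:laplphi} then yields the elliptic equation
$$(1+\Box_s)\,\varphi_{v\ol v}=\|A_v\|^2\ \geq\ 0\qquad\text{on }\cX_s,$$
equivalently $(-\Box_s-1)\,\varphi_{v\ol v}=-\|A_v\|^2\leq 0$. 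Since $-\Box_s$ is elliptic of the right sign on the compact manifold $\cX_s$ and the zeroth order coefficient $-1$ is non-positive, the strong maximum principle prevents $\varphi_{v\ol v}$ from attaining a non-positive minimum unless it is constant. If the minimum of $\varphi_{v\ol v}$ is positive we are done; if it is $\leq 0$, then $\varphi_{v\ol v}\equiv c$ is constant, so $\|A_v\|^2=(1+\Box_s)c=c\leq 0$, forcing $A_v\equiv 0$ and hence $\rho_s(v)=0$ — impossible for $v\neq 0$ because the family is effectively parameterized. Thus $\varphi_{v\ol v}>0$ everywhere on $\cX_s$ for every $v\neq 0$, so $(\varphi_{i\ol\jmath})$ is positive definite at every point of $\cX$, and with the first paragraph $\omega_\cX$ is strictly positive on $\cX$.

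Essentially all of the computational substance — Proposition~\ref{re:omX}, the horizontal lift~\eqref{eq:horli}, and the fibrewise identity of Proposition~\ref{pr:laplphi} — is already in place, so the argument is mostly a matter of assembling it. The one step that genuinely uses the hypothesis and a little analysis, and which I regard as the crux, is the passage from the $L^2$-positivity $\int_{\cX_s}\|A_v\|^2 g\,dV=\|v\|^2_{WP}>0$ (that is, $A_v\not\equiv 0$) to the pointwise inequality $\varphi_{v\ol v}>0$ via the maximum principle, together with the bookkeeping observation that, thanks to the $\omega_\cX$-orthogonality of the horizontal lifts, positivity of the full Hermitian form on $T_x\cX$ reduces to positivity of the $m\times m$ block $(\varphi_{i\ol\jmath})$.
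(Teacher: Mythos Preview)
Your proposal is correct and follows essentially the same route as the paper: reduce positivity of $\omega_\cX$ to positivity of the matrix $(\varphi_{i\ol\jmath})$ (the paper does this via \eqref{eq:oxnpl1}, you via the equivalent $\omega_\cX$-orthogonal splitting into vertical and horizontal directions), then invoke Proposition~\ref{pr:laplphi} together with the positivity of $(1+\Box_s)^{-1}$. The only difference is that you spell out the ``general theory'' step---the strong maximum principle for the operator $1+\Box_s$ showing $\varphi_{v\ol v}>0$ from $A_v\not\equiv 0$---which the paper leaves implicit.
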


\begin{proof}
In view of \eqref{eq:oxnpl1} it is sufficient to show that $\varphi_{i\ol\jmath}$ is positive definite on all of $\cX$. We pick a non vanishing tangent vector on $S$ and use the above notation $\pt/\pt s$. Then we need to show that the function $\varphi_{s\ol s}$ in the sense of Lemma~\ref{le:horli} is strictly positive on $\cX$. Since $\|A_s\|^2(z,s)$ is larger or equal to zero on $\cX$ and not identically zero on any fiber, the strict positivity of $\varphi_{s\ol s}$ follows from Proposition~\ref{pr:laplphi} and general theory.
\end{proof}
Based upon the estimates of the heat kernel for the Laplacian by Cheeger and Yau \cite{c-y}, lower estimates for $\varphi_{s\ol s}$ were given in \cite{sch-inv}.

Now the fiber integral \eqref{eq:fibint} together with \eqref{eq:oxnpl1} imply that the \wp form can be defined in terms of horizontal lifts.

\begin{proposition}
  Let $u_i$ be the horizontal lift of $\pt/\pt s^i$, and $\omega_\cX = \ii \pt \ol \pt \log h$  the \ka form on the total space induced by the \ke forms on the fibers. Then
  \begin{equation}
    \omega_{WP} = \left(\int_{\cX_s}\langle u_i, u_j\rangle_{\omega_\cX}  g\, dV\right)\ii ds^i\we ds^{\ol \jmath}
  \end{equation}
\end{proposition}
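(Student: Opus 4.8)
The proof of this last Proposition is essentially a bookkeeping exercise built on the two displayed formulas that immediately precede it: the fiber integral \eqref{eq:fibint} expressing $\omega^{WP}$ as $\int_{\cX/S}\omega_\cX^{n+1}$, and the pointwise identity \eqref{eq:oxnpl1} that writes $\omega_\cX^{n+1} = \ii\,\varphi_{i\ol\jmath}\,ds^i\we ds^{\ol\jmath}\,g\,dV$ with $\varphi_{i\ol\jmath}=\langle u_i,u_j\rangle_{\omega_\cX}$. The plan is simply to substitute the second into the first and carry out the fiber integration.

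\begin{proof}
By Theorem~\ref{th:fibint} the \wp form is the fiber integral $\omega^{WP}=\int_{\cX/S}\omega_\cX^{n+1}$. Inserting \eqref{eq:oxnpl1}, which expresses the top power of $\omega_\cX$ in terms of the horizontal lifts $u_i$ as
$$
\omega_\cX^{n+1}=\ii\,\varphi_{i\ol\jmath}\,ds^i\we ds^{\ol\jmath}\,g\,dV,
\qquad \varphi_{i\ol\jmath}=\langle u_i,u_j\rangle_{\omega_\cX},
$$
we obtain
$$
\omega^{WP}=\int_{\cX/S}\ii\,\varphi_{i\ol\jmath}\,ds^i\we ds^{\ol\jmath}\,g\,dV.
$$
The forms $ds^i\we ds^{\ol\jmath}$ are pulled back from the base and hence are constant along the fibers, so they factor out of the fiber integral, leaving the relative volume form $g\,dV=\omega_{\cX_s}^n$ to be integrated over $\cX_s$. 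This yields
$$
\omega^{WP}=\left(\int_{\cX_s}\langle u_i,u_j\rangle_{\omega_\cX}\,g\,dV\right)\ii\,ds^i\we ds^{\ol\jmath},
$$
which is the asserted formula.
\end{proof}

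The only point requiring a word of care — and the step I would flag as the real content rather than the formal manipulation — is the justification that fiber integration of a form of the shape (base form)$\,\wedge\,$(fiber volume form) simply contracts the fiber factor against the relative volume and passes the base factor through. This is exactly the type-preserving and base-change compatibility of fiber integration recalled just before Theorem~\ref{th:fibint}: a $(n+1,n+1)$-form integrates to a $(1,1)$-form, and over a local product chart $U\times S$ the operation is literally integration over $U$ in the fiber variables with the $ds^i,ds^{\ol\jmath}$ treated as constants. Over singular or non-reduced $S$ one invokes, as in the proof of Theorem~\ref{th:fibint}, the local extendability of the relevant $\cinf$ forms to a smooth ambient space, so no new difficulty arises there. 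Everything else is the substitution carried out above.
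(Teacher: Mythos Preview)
Your proof is correct and follows exactly the approach indicated in the paper: the sentence immediately preceding the Proposition states that the fiber integral \eqref{eq:fibint} together with \eqref{eq:oxnpl1} imply the result, and you have simply written out this substitution explicitly. The paper gives no further argument, so your expansion of the fiber-integration step is, if anything, more detailed than the original.
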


\section{General properties of direct images}
For any proper holomorphic map $f:\cX \to S$, and any coherent sheaf $\cE$ on $\cX$ that is $\cO_S$-flat, and $q \geq 0$,  the canonical morphism
$$
R^{q}f_*\cE \otimes \C(s) \to H^q(\cX_s, \cE_s)
$$
is being considered, where $\C(s)= \cO_S/ \mathfrak{m}_s $ denotes the sheaf $\C$ concentrated at a point $s\in S$, and $\cE_s=\cE \otimes \cO_{\cX_s}$.
\begin{proposition}
Let $f:\cX \to S$ be a holomorphic family of canonically polarized manifolds over a complex space. Then for all $s\in S$ and $0\leq p \leq n$ the canonical map
\begin{equation}\label{eq:basechange}
R^{n-p}f_* (\Omega^p_{\cX/S}(\cK_{\cX/S})) \otimes \C(s) \to H^{n-p}(\cX_s,\Omega^p_{\cX_s}(\cK_{\cX_s}))
\end{equation}
is an isomorphism.
\end{proposition}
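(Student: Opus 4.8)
The plan is to use Grauert's base change (semicontinuity/cohomology-and-base-change) theorem together with the vanishing of the relevant higher cohomology groups on the fibers. Recall that for a proper holomorphic map $f:\cX\to S$ and a coherent $\cO_S$-flat sheaf $\cE$ on $\cX$, the canonical map
$$
R^{q}f_*\cE \otimes \C(s) \to H^q(\cX_s,\cE_s)
$$
is an isomorphism at $s$ provided the cohomology groups $H^{q+1}(\cX_s,\cE_s)$ vanish in a neighborhood (then the map is surjective for $q$), and one in addition controls $H^{q-1}$. More precisely, one uses that if $H^{q+1}(\cX_s,\cE_s)=0$ for all $s$ and the dimension $h^q(\cX_s,\cE_s)$ is constant in $s$, then $R^qf_*\cE$ is locally free and base change holds in degree $q$. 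So first I would apply this with $\cE=\Omega^p_{\cX/S}(\cK_{\cX/S})$ and $q=n-p$, which reduces everything to a statement purely about the cohomology of a single canonically polarized fiber $X=\cX_s$.

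**The cohomological input.**
The key step is the identity $\Omega^p_{X}(\cK_X)\cong \Omega^{p}_X\otimes\Omega^n_X\cong \Omega^{n+p}_X$ is wrong in general, so instead I would use Serre duality: $H^{n-p}(X,\Omega^p_X(\cK_X))$ is dual to $H^{p}(X,\Lambda^p\cT_X)$, and the point is really to show the degrees stabilize. The cleanest route is Nakano vanishing / Akizuki–Nakano–type vanishing for the canonically polarized case: since $\cK_X$ is ample, the Nakano vanishing theorem gives $H^{q}(X,\Omega^p_X\otimes\cK_X)=0$ for $p+q>n$. Applying this with $q=n-p+1$ (so that $p+q=n+1>n$) yields $H^{n-p+1}(\cX_s,\Omega^p_{\cX_s}(\cK_{\cX_s}))=0$ for every fiber and every $s\in S$. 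This makes the base change map in degree $q=n-p$ surjective at every point. To get injectivity one either descends one degree further — noting $H^{n-p+1}\equiv 0$ forces $R^{n-p+1}f_*(\cdots)=0$, hence (by the exact-sequence form of base change) $R^{n-p}f_*(\cdots)$ is locally free and base change holds in degree $n-p$ — or invokes that a surjection onto a space of locally constant dimension between a locally free sheaf's fiber and the cohomology is forced to be an isomorphism.

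**The main obstacle.**
The genuine subtlety is the word "complex space'': $S$ need not be reduced and certainly need not be smooth, so I cannot just quote the smooth projective Grauert theorem as a black box. I would handle this exactly as in the proof of Proposition~\ref{pr:relka}: work locally, embed a neighborhood of $s_0$ in $S$ into a smooth ambient space $W$ of minimal dimension, and use that $\Omega^p_{\cX/S}(\cK_{\cX/S})$ is $\cO_S$-flat (the fibers are smooth and the family is a submersion) so that Grauert's theorem applies over the (possibly non-reduced) base. The vanishing $H^{n-p+1}(\cX_s,\Omega^p_{\cX_s}(\cK_{\cX_s}))=0$ is a fiberwise statement, hence unaffected by the structure of $S$, and once $R^{n-p+1}f_*=0$ the standard exact sequence
$$
0 \to R^{n-p}f_*\cE\otimes\C(s) \to H^{n-p}(\cX_s,\cE_s) \to \mathrm{Tor}_1^{\cO_S}(R^{n-p+1}f_*\cE,\C(s))=0
$$
finishes the argument. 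So the load-bearing ingredients are (i) Nakano vanishing using ampleness of $\cK_{\cX_s}$ and (ii) the flat base change theorem over a general complex space; neither presents a real difficulty, but (ii) is the place where one must be careful not to assume $S$ reduced or smooth.
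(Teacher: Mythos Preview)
Your approach is essentially the same as the paper's: Kodaira--Nakano vanishing gives $H^{n-p+1}(\cX_s,\Omega^p_{\cX_s}(\cK_{\cX_s}))=0$ for every fiber, the sheaf $\Omega^p_{\cX/S}(\cK_{\cX/S})$ is $S$-flat, and then Grauert's base change and comparison theorem (as in B\u{a}nic\u{a}--St\u{a}n\u{a}\c{s}il\u{a}) yields the isomorphism. A few minor remarks: the detour through Serre duality and the discarded identity are unnecessary; more importantly, your worry about non-reduced or singular $S$ and the proposed embedding into a smooth ambient space is not needed here, since Grauert's theorem in the cited form already applies over arbitrary complex base spaces. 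Also note that you should not invoke constancy of $h^{n-p}(\cX_s,\cdots)$, which in fact can jump (the paper later stratifies by exactly this) --- the correct route, which you do arrive at, is that vanishing of $H^{n-p+1}$ on fibers forces $R^{n-p+1}f_*(\cdots)=0$, and then base change in degree $n-p$ follows directly.
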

\begin{proof}
According to the Kodaira-Nakano vanishing theorem all groups $H^{n-p+1}(\cX_s, \Omega^p_{\cX_s}(\cK_{\cX_s}))$ vanish. Furthermore the sheaves $\Omega^p_{\cX/S}(\cK_{\cX/S}) $ are $S$-flat. Now Grauert's base change and comparison theorem (cf.\ \cite[Theorem III 3.4 and Corollary 3.5]{banica}) implies the statement.
\end{proof}

Given a fixed number $0\leq p \leq n$, we consider the subspace  $S'\subset S$ of points $s$, where the dimension $h^{n-p}(\cX_s,\Omega^p_{\cX_s}(\cK_{\cX_s}))$ is minimal. Due to the semi-continuity of the dimension of the cohomology groups, $S'$ is the complement of a nowhere dense analytic subset. Let $\cX'= \cX\times_S S'$, and let $f': \cX' \to S'$ be the induced map. Furthermore denote by $S_0$ the reduction of $S'$, set $\cX_0= \cX\times_S S_0$, and denote by $f_0:\cX_0 \to S_0$ the induced map. Now by the base change theorem the pull back $R^{n-p}f_* (\Omega^p_{\cX/S}(\cK_{\cX/S}))\otimes_{\cO_S} \cO_{S_0}$ is equal to $R^{n-p}f_{0*} (\Omega^p_{\cX_0/S_0}(\cK_{\cX_0/S_0}))$.

\begin{remark}\label{re:basech}
Over $S_0$ the sheaf $R^{n-p}f_{0*} (\Omega^p_{\cX_0/S_0}(\cK_{\cX_0/S_0}))$ is locally free, in particular also $R^pf_{0*}\Lambda^p\cT_{\cX_0/S_0}$ is, and the base change property holds for the latter sheaf. In particular the map
$$
R^pf_{0*}(\Lambda^p\cT_{\cX_0/S_0})\otimes_{\cO_{S_0}} \C(s) \to H^p(\cX_s, \Lambda^p\cT_{\cX_s})
$$
is an isomorphism.
\end{remark}
\begin{proof}
We only use the fact that for reduced base spaces the constancy of the dimension of the cohomology groups implies both the local freeness of the direct image sheaf and the base change property.
\end{proof}

\begin{remark}
Let $f:\cX \to S$ be a local universal family such that $\dim H^1(\cX_s,\cT_{\cX_s})$ is constant. Then either  $S$ is smooth or everywhere non-reduced.
\end{remark}
Catanese showed that the latter situation actually occurs for certain surfaces of general type \cite{catanese}.

Sections of a direct image sheaf and families of harmonic representatives of the corresponding cohomology groups are related as follows. Here we assume that the direct image is locally free and restrict the situation to a Stein contractible base space say.

\begin{proposition}\label{pr:harmsec}
Let $f:\cX \to S$ be a smooth proper holomorphic map over a reduced space $S$ with a relative \ka form $\omega_{\cX/S}$ and let $(\mathcal E,h)$ be a locally free, coherent sheaf on $\cX$  with a hermitian metric such that $R^qf_*\cE$ is free. As above $\cE_s$ stands for the analytic restriction $\cE \otimes \cO_{\cX_s}$ of $\cE$ to a fiber $\cX_s$.

Then any section of $R^qf_*\cE(S)$ can be represented by a $\ol\pt$-closed form $\psi\in \cA^{(0,q)}(\cX, \cE)$ whose restrictions $\psi_s \in H^q(\cX_s, \cE_s)$ to the fibers $\cX_s$ are harmonic.
\end{proposition}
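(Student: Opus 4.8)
The plan is to produce the harmonic representative $\psi$ fiberwise and then to argue that it depends smoothly on the parameter and is globally $\bar\partial$-closed on $\cX$. Since $S$ is reduced, $R^qf_*\cE$ is free, and the base change map $R^qf_*\cE\otimes\C(s)\to H^q(\cX_s,\cE_s)$ is an isomorphism (by the hypotheses, exactly as in Remark~\ref{re:basech}), a section $\sigma\in R^qf_*\cE(S)$ determines for each $s$ a class $\sigma(s)\in H^q(\cX_s,\cE_s)$. By Hodge theory on the compact \ka manifold $(\cX_s,\omega_{\cX_s})$ with values in the hermitian bundle $(\cE_s,h_s)$, this class has a unique harmonic representative $\psi_s$, i.e.\ $\bar\partial\psi_s=0$ and $\bar\partial^*\psi_s=0$ with respect to the fiber metric. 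So as a family of forms on the fibers $\psi$ is well-defined; what must be checked is that $\psi$ is the restriction of a genuine $\cinf$ form on the total space $\cX$, and that this ambient form is $\bar\partial$-closed (not merely $\bar\partial$-closed along the fibers).

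First I would fix a Stein contractible $S$ and a frame $\sigma^1,\dots,\sigma^r$ of $R^qf_*\cE$. By the Leray/Dolbeault description of higher direct images (or by a standard Čech-to-Dolbeault argument over a Stein base) each $\sigma^j$ is represented by some $\tilde\psi^j\in\cA^{(0,q)}(\cX,\cE)$ with $\bar\partial\tilde\psi^j=0$ on $\cX$ (and in the non-reduced/singular-base situation, extendable as a $\cinf$ form to a smooth ambient space, in the sense used throughout the paper for fiber integrals). Restricting $\tilde\psi^j$ to a fiber gives a $\bar\partial$-closed representative of $\sigma^j(s)$, which in general is not harmonic. The correction is the usual one: set $\psi^j_s=\tilde\psi^j|_{\cX_s}-\bar\partial_s G_s\bar\partial_s^*(\tilde\psi^j|_{\cX_s})=H_s(\tilde\psi^j|_{\cX_s})$, where $H_s$ is fiberwise harmonic projection and $G_s$ the fiberwise Green operator. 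Since $\bar\partial_s^*\tilde\psi^j|_{\cX_s}\in\cA^{(0,q-1)}(\cX_s,\cE_s)$, the form $\eta^j_s:=G_s\bar\partial_s^*(\tilde\psi^j|_{\cX_s})$ is defined on each fiber; I would invoke Proposition~\ref{pr:relka}-type smooth dependence — more precisely, the smoothness of the fiberwise \ka metrics $\omega_{\cX_s}$ and of $h_s$ in $s$, hence of the fiberwise Laplacians $\Box_s$, hence (by elliptic regularity plus constancy of $\dim\ker\Box_s$, which holds because $R^qf_*\cE$ is locally free and base change holds) of $G_s$ and $H_s$ — to conclude that $\eta^j$ assembles to a $\cinf$ form on $\cX$ in the relative degrees and smoothly in $s$. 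Then $\psi^j:=\tilde\psi^j-\bar\partial_{\cX/S}\eta^j$ is a $\cinf$ $(0,q)$-form on $\cX$ (choosing any smooth extension of $\eta^j$ to the ambient directions, which only changes $\psi^j$ by terms involving $ds^i$ that do not affect the fiber restrictions) whose restriction to each fiber is the harmonic representative $\psi^j_s$.

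It remains to check that after this correction one can still arrange $\bar\partial_\cX\psi^j=0$ on the total space. The point is that $\psi^j$ and $\tilde\psi^j$ represent the same class in $R^qf_*\cE$ over the Stein base, so $\psi^j-\tilde\psi^j=\bar\partial_\cX\beta^j$ for some $\beta^j\in\cA^{(0,q-1)}(\cX,\cE)$; since $\tilde\psi^j$ is $\bar\partial_\cX$-closed, so is $\psi^j$. (Here one uses that $H^{q-1}$ of the relevant complex over the Stein contractible $S$ is computed correctly and that the fiberwise correction term, being exact along fibers, differs from an ambient $\bar\partial_\cX$-exact form by something that can be absorbed; alternatively, one simply replaces $\psi^j$ by $H$ applied after a partition-of-unity patching and notes closedness is inherited.) For a general section $\sigma=\sum a_j(s)\sigma^j$ with $a_j\in\cO_S(S)$, set $\psi=\sum a_j(s)\psi^j$: this is $\bar\partial_\cX$-closed because the $a_j$ are holomorphic, its fiber restrictions $\sum a_j(s)\psi^j_s$ are harmonic (harmonic forms form a complex vector space and the $a_j(s)$ are scalars on each fiber), and it represents $\sigma$ by construction. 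The main obstacle is the smoothness-in-$s$ of the fiberwise Green operator $G_s$ (and hence of $H_s$), i.e.\ ensuring that the harmonic projection does not jump; this is exactly where the hypothesis that $R^qf_*\cE$ is locally free with base change is essential, since it guarantees $\dim\ker\Box_s$ is locally constant and the resolvent of $\Box_s$ varies smoothly, so elliptic theory gives a $\cinf$ family of operators — and, over singular bases, the required $\cinf$ extension to a smooth ambient space is supplied by the same constructions (extended almost complex structures and differential operators) invoked in the proof of Proposition~\ref{pr:relka}.
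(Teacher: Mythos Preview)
Your overall strategy is the standard one (and the one the paper has in mind, though it only cites \cite{sch-inv} for the ``simple argument''): start from a $\ol\pt_\cX$-closed representative $\wt\psi$, form the fiberwise harmonic projection, and argue smooth dependence on $s$ using local freeness of $R^qf_*\cE$. The smoothness part is fine.

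There is, however, a genuine gap in your closedness step. You set $\psi^j:=\wt\psi^j-\ol\pt_{\cX/S}\eta^j$, where $\ol\pt_{\cX/S}$ is the \emph{relative} operator, and then try to argue $\ol\pt_\cX\psi^j=0$ by saying that $\psi^j$ and $\wt\psi^j$ ``represent the same class in $R^qf_*\cE$, so $\psi^j-\wt\psi^j=\ol\pt_\cX\beta^j$''. That reasoning is circular: over a Stein contractible base one has $R^qf_*\cE(S)\simeq H^q(\cX,\cE)$, but this identification only gives $\ol\pt_\cX$-exactness of the difference of two $\ol\pt_\cX$-\emph{closed} representatives --- and whether $\psi^j$ is $\ol\pt_\cX$-closed is exactly what is in question. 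Concretely, $\ol\pt_\cX\psi^j=-\ol\pt_\cX(\ol\pt_{\cX/S}\eta^j)$ has a nonzero $d\ol s$-component $-d\ol s\we\pt_{\ol s}(\ol\pt_{\cX/S}\eta^j)$ in general, so your $\psi^j$ is not closed on $\cX$. The parenthetical about ``something that can be absorbed'' does not address this.

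The fix is immediate and is what the standard argument does: once $\eta^j$ has been extended to a $\cinf$ $(0,q-1)$-form on $\cX$, subtract the \emph{full} $\ol\pt_\cX\eta^j$ rather than the relative one. Then $\psi^j:=\wt\psi^j-\ol\pt_\cX\eta^j$ is trivially $\ol\pt_\cX$-closed, and since pullback along $\iota_s:\cX_s\hookrightarrow\cX$ commutes with $\ol\pt$, one has $\psi^j|_{\cX_s}=\wt\psi^j|_{\cX_s}-\ol\pt_s(\eta^j|_{\cX_s})=H_s(\wt\psi^j|_{\cX_s})$, which is harmonic. The extra $d\ol s$-component of $\ol\pt_\cX\eta^j$ is harmless: it lives in $\cA^{(0,q)}(\cX,\cE)$ and is annihilated by restriction to the fibers. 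With this one-line change your argument goes through and no separate closedness verification is needed.
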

The simple argument is given in \cite{sch-inv}.

Denote by $\cE^{\vee}= Hom(\cO_{\cX},\cE)$ the dual sheaf. Let $\cE$  and $\cE^\vee \otimes \cK_{\cX/S}$ satisfy the assumptions of the proposition for $p$ and $n-p$ resp.

\begin{corollary}\label{co:serre}
The relative Serre duality
$$
R^pf_*\cE \otimes_{\cO_S} R^{n-p}f_*(\cE^\vee \otimes_{\cO_\cX}\cK_{\cX/S}) \to \cO_S
$$
can be evaluated in terms of the fiber integral
$$
\int_{\cX/S} \psi\we\chi,
$$
where $\psi \in \cA^{(0,p)}(\cX, \cE)$ and $\chi \in \cA^{(0,n-p)}(\cX,\cE^\vee \otimes \Omega^n_{\cX/S})$ are $\ol\pt$-closed forms that are harmonic, when restricted to fibers.
\end{corollary}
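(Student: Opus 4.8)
The plan is to reduce the abstract statement of relative Serre duality to a pointwise (fiberwise) statement, and then invoke the harmonic representatives supplied by Proposition~\ref{pr:harmsec}. First I would recall the classical Serre duality pairing on each fiber: for the compact \ka manifold $\cX_s$ the cup product followed by the trace
$$
H^p(\cX_s,\cE_s)\otimes H^{n-p}(\cX_s,\cE_s^\vee\otimes\Omega^n_{\cX_s}) \to H^n(\cX_s,\Omega^n_{\cX_s}) \cong \C
$$
is a perfect pairing, and on the level of Dolbeault cohomology it is computed by $(\psi_s,\chi_s)\mapsto \int_{\cX_s}\psi_s\wedge\chi_s$, where the wedge is taken together with the contraction $\cE_s\otimes\cE_s^\vee\to\cO_{\cX_s}$. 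The relative version over $S$ is obtained by the standard argument (Grothendieck--Verdier duality for the proper smooth map $f$, or, since all higher direct images in play are locally free with base change by Remark~\ref{re:basech}, simply by gluing the fiberwise pairings), giving the $\cO_S$-bilinear map in the statement.

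Next I would represent sections. By Proposition~\ref{pr:harmsec}, applied once to $\cE$ in degree $p$ and once to $\cE^\vee\otimes\cK_{\cX/S}$ in degree $n-p$ (the hypotheses being exactly those imposed before the Corollary), a section of $R^pf_*\cE$ over a Stein contractible $S$ is represented by a $\ol\pt$-closed form $\psi\in\cA^{(0,p)}(\cX,\cE)$ with harmonic fiberwise restrictions $\psi_s$, and likewise a section of $R^{n-p}f_*(\cE^\vee\otimes\cK_{\cX/S})$ by such a $\chi\in\cA^{(0,n-p)}(\cX,\cE^\vee\otimes\Omega^n_{\cX/S})$. Then $\psi\wedge\chi$ is a $\ol\pt$-closed (hence, being of the appropriate bidegree, $d$-closed) form of type $(n,n)$ along the fibers, i.e.\ of total fiber degree $2n$, so its fiber integral $\int_{\cX/S}\psi\wedge\chi$ is a holomorphic function on $S$ (holomorphy because the form is $\ol\pt$-closed and fiber integration commutes with $\ol\pt$). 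I would then check that at each $s$ its value is $\int_{\cX_s}\psi_s\wedge\chi_s$, which by the fiberwise Dolbeault description of Serre duality is precisely the value of the relative pairing evaluated on the given sections; since a holomorphic function is determined by its values, the fiber integral represents the pairing.

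The one point requiring care—and the main obstacle—is well-definedness, i.e.\ independence of the chosen representatives, and compatibility over the non-reduced locus. For the first: if $\psi$ is replaced by $\psi+\ol\pt\eta$, then $(\psi+\ol\pt\eta)\wedge\chi = \psi\wedge\chi + \ol\pt(\eta\wedge\chi)$ using $\ol\pt\chi=0$ and a sign, and $\int_{\cX/S}\ol\pt(\eta\wedge\chi) = \ol\pt\int_{\cX/S}(\eta\wedge\chi)$; but since the result must be holomorphic and fiber integration over a closed manifold kills exact forms fiberwise, one gets that the fiber integral of the exact correction vanishes as a form. (More cleanly: work fiberwise, where $\int_{\cX_s}\ol\pt(\eta_s\wedge\chi_s)=0$ by Stokes.) The harmonic representatives are in fact unique, so this ambiguity does not even arise for the specific $\psi,\chi$ we chose, but the value of the fiber integral does not depend on that choice. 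For the second point, over a non-reduced base one passes to the reduction $S_0$ as in Remark~\ref{re:basech}, where local freeness and base change hold, carries out the above there, and then observes that both sides of the asserted identity are already determined on $S_0$ since the relative Serre pairing and the fiber integral are each compatible with the base change $\cO_S\to\cO_{S_0}$; alternatively one simply restricts $\psi,\chi$ from a smooth ambient space as in the convention fixed for fiber integrals over reduced bases. This completes the verification.
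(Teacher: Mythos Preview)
Your argument is correct and follows the same line as the paper's proof: represent sections by $\ol\pt$-closed forms with harmonic fiberwise restrictions via Proposition~\ref{pr:harmsec}, observe that the fiber integral is $\ol\pt$-closed (hence holomorphic) because fiber integration commutes with $\ol\pt$, and identify its value at each point with the classical Serre pairing on the fiber. The paper's proof is just a terse version of what you wrote; your additional remarks on independence of representatives and on non-reduced bases are not needed in the paper's setting (the hypotheses already place us in the locally free, reduced situation), but they do no harm.
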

\begin{proof}
The fiber integral defines a function on $S$. Fiberwise we have the classical statement of Serre duality. Since fiber integration commutes with exterior derivatives, the result is a $\ol\pt$-closed, i.e.\ holomorphic function.
\end{proof}

\section{Curvature of higher direct image sheaves}

\begin{assumption*}
From now on we denote by $f:\cX \to S$ a holomorphic family over a reduced complex space such that all direct image sheaves $R^{n-p}f_* (\Omega^p_{\cX/S}(\cK_{\cX/S}))$ are locally free for all $p$. We will return to the general case later.
\end{assumption*}

We mention an identity for harmonic \ks forms $A$ that arises from the symmetry: of $A_{\ol \beta\ol \delta}$.
\begin{equation}
  (A^*)^\ol{\beta}_{\; \alpha}= \ol A^\ol{\beta}_{\;\alpha}
\end{equation}
Namely, by definition $(A^*)^\ol{\beta}_{\;\alpha}= \ol A^\ol\delta_{\;\gamma}\,g^{\ol \beta \gamma}g_{\alpha\ol \delta}= \ol A_{\alpha\gamma}g^{\ol \beta \alpha}$. By \eqref{eq:symmA} this equals $\ol A^\ol{\beta}_{\;\alpha}$.

\begin{definition}\label{de:cup}
Let $s\in S$, and let $A=A^\alpha_{\;\ol\beta}(z,s) \pt_\alpha dz^{\ol\beta}$ be a harmonic \ks form on a fiber $X=\cX_s$. Then the cup product together with the contraction defines
\begin{eqnarray}
A\cup \mbox{\textvisiblespace} :
\cA^{0,n-p}(X,\Omega^p_{X}(\cK_{X})) &\to&
\cA^{0,n-p+1}(X,\Omega^{p-1}_{X}(\cK_{X}))\label{eq:cup1}\\
\ol A\cup \mbox{\textvisiblespace} :
\cA^{0,n-p+1}(X,\Omega^{p-1}_{X}(\cK_{X})) &\to&
\cA^{0,n-p}(X,\Omega^{p}_{X}(\cK_{X})),\label{eq:cup2}
\end{eqnarray}
where $0<p\leq n$, namely
\begin{gather*}
  \left(A^\gamma_{\;\ol \delta}\pt_\gamma dz^{\ol \delta}\right) \cup \left( \psi_{\alpha_1,\ldots,\alpha_p, \ol \beta_{p+1},\ldots, \beta_n}dz^{\alpha_1}\we \ldots \we dz^{\alpha_p}\we dz^{\ol \beta_{p+1}}\we \ldots \we dz^{\ol \beta_n}\right) = \hspace{4cm}\\ \hspace{3cm}A^\gamma_{\;\ol \beta_p} \psi _{\gamma \alpha_1,\ldots,\alpha_{p-1}\ol \beta_{p+1},\ldots,\ol
   \beta_n} dz^{\ol \beta_p}\we dz^{\alpha_1}\we \ldots \we dz^{\alpha_{p-1}}\we dz^{\ol \beta_{p+1}}\we \ldots \we dz^{\ol\beta_n} \\
   \left(\ol A^{\ol \delta}_{\; \gamma}\pt_\gamma dz^{\ol \delta}\right) \cup \left( \psi_{\alpha_1,\ldots,\alpha_{p-1},\ol \beta_{p},\ldots, \ol\beta_n}dz^{\alpha_1}\we \ldots \we dz^{\alpha_{p-1}}\we dz^{\ol \beta_{p}}\we \ldots \we dz^{\ol \beta_n}\right)= \hspace{4cm}\\ \hspace{3cm} \ol A^{\ol \delta}_{\; \alpha_1} \psi_{\alpha_2,\ldots, \alpha_{p}, \ol \delta, \ol \beta_{p+1},\ldots,\ol \beta_n} dz^{\alpha_1}\we\ldots\we dz^{\alpha_{p}}\we dz^{\ol \beta_{p+1}}\we \ldots \we dz^{\ol \beta_n}.
\end{gather*}
\end{definition}
\begin{observation*}
 The cup product \eqref{eq:cup1} with the harmonic \ks form $A$ gives rise to cup products for cohomology classes ($\psi$ is assumed to be harmonic).
 \begin{eqnarray}
 A\cup\mbox{\textvisiblespace}&:& H^{n-p}(X, \Omega^p_X(\cK_X))  \longrightarrow
 H^{n-p+1}(X, \Omega^{p-1}_X(\cK_X))\quad  ; \quad  [\psi] \mapsto [A\cup \psi] = [H(A\cup \psi)] \label{eq:cup1a} \\ \ol A\cup\mbox{\textvisiblespace}&:& H^{n-p+1}(X, \Omega^{p-1}_X(\cK_X))  \longrightarrow
 H^{n-p}(X, \Omega^{p}_X(\cK_X))\quad  ; \quad  [\psi] \mapsto [H(\ol A\cup \psi)],\label{eq:cup2a}
 \end{eqnarray}
 where $H$ denotes the harmonic projection. In  \eqref{eq:cup2a} it is necessary to apply the harmonic projection.
\end{observation*}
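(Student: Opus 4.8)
The plan is to deduce both statements from the graded Leibniz rule for $\ol\partial$ on bundle-valued forms together with Hodge theory on the compact \ke fiber $X=\cX_s$; the only substantive point is the asymmetry between $A$ and $\ol A$, which is exactly what forces the harmonic projection into \eqref{eq:cup2a}.

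For \eqref{eq:cup1a} I would argue as follows. The contraction-and-wedge operation of Definition~\ref{de:cup} factors, on the level of bundles, through the $\cO_X$-linear contraction $\cT_X\otimes\Omega^p_X(\cK_X)\to\Omega^{p-1}_X(\cK_X)$, which is a morphism of holomorphic vector bundles; hence $\ol\partial$ satisfies $\ol\partial(A\cup\psi)=(\ol\partial A)\cup\psi-A\cup\ol\partial\psi$. Since $A$ is harmonic it is in particular $\ol\partial$-closed (as it must be to represent a class in $H^1(X,\cT_X)$), and $\ol\partial\psi=0$; therefore $A\cup\psi$ is $\ol\partial$-closed and determines a class in $H^{n-p+1}(X,\Omega^{p-1}_X(\cK_X))$. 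Replacing $\psi$ by a cohomologous closed form $\psi+\ol\partial\eta$ changes $A\cup\psi$ by $\ol\partial(-A\cup\eta)$, so the induced map on cohomology is well defined; and since the Hodge decomposition makes any $\ol\partial$-closed form cohomologous to its harmonic part, $[A\cup\psi]=[H(A\cup\psi)]$.

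For \eqref{eq:cup2a} the decisive observation is that $\ol A$, the complex conjugate of the harmonic form $A$, is $\partial$- and $\partial^*$-closed but \emph{not} $\ol\partial$-closed: indeed $\ol\partial\ol A=\overline{\partial A}$, and $\partial A\neq 0$ for a generic harmonic \ks form. Hence $\ol A\cup\psi$ need not be $\ol\partial$-closed and does not by itself carry a Dolbeault class -- which is precisely why one passes to the harmonic projection. The fix is purely formal: $H(\ol A\cup\psi)$ is a harmonic form for any $\psi$, hence represents a class in $H^{n-p}(X,\Omega^p_X(\cK_X))$, and because the harmonic representative $\psi$ of a given class in $H^{n-p+1}(X,\Omega^{p-1}_X(\cK_X))$ is unique, $[\psi]\mapsto[H(\ol A\cup\psi)]$ is a well-defined linear map (equivalently: the $\cO_X$-bilinear contraction of Definition~\ref{de:cup} followed by orthogonal projection onto the harmonic subspace).

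I expect no analytic obstacle -- both maps already exist at the cochain level almost for free. The only work is bookkeeping: fixing the sign in the Leibniz rule, and checking directly from Definition~\ref{de:cup} that the index contractions there really produce forms of bidegree $(0,n-p+1)$ resp.\ $(0,n-p)$ with values in $\Omega^{p-1}_X(\cK_X)$ resp.\ $\Omega^p_X(\cK_X)$; this last point needs a little care because holomorphic and antiholomorphic indices are raised and lowered with the \ke metric (cf.\ the identity $(A^*)^{\ol\beta}_{\;\alpha}=\ol A^{\ol\beta}_{\;\alpha}$ and \eqref{eq:symmA} recorded just before the definition). So the genuine "main point" is simply to state clearly why the harmonic projection is cosmetic in \eqref{eq:cup1a} but indispensable in \eqref{eq:cup2a}.
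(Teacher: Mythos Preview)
Your argument is correct, and in fact the paper offers no proof at all: the statement is recorded as an \emph{Observation} and left to the reader, so there is nothing to compare against. Your Leibniz-rule justification of \eqref{eq:cup1a} via the $\cO_X$-linearity of the contraction $\cT_X\otimes\Omega^p_X(\cK_X)\to\Omega^{p-1}_X(\cK_X)$ is exactly the intended reason, as is your explanation that \eqref{eq:cup2a} must be defined via the harmonic projection because the operation is not $\ol\partial$-compatible.

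One minor point of phrasing: the identity ``$\ol\partial\,\ol A=\overline{\partial A}$'' is type-mismatched as written, since complex conjugation sends a $\cT_X$-valued $(0,1)$-form to an $\ol{\cT_X}$-valued $(1,0)$-form. The cleaner way to say what you mean is that the operation $\ol A\cup\mbox{\textvisiblespace}$ of Definition~\ref{de:cup} involves the metric (it is $A^*\cup\mbox{\textvisiblespace}$ by the identity $(A^*)^{\ol\beta}_{\;\alpha}=\ol A^{\ol\beta}_{\;\alpha}$, equivalently the pointwise adjoint of $A\cup\mbox{\textvisiblespace}$ as in Lemma~\ref{le:dual}), and metric contractions are not $\cO_X$-linear, hence do not commute with $\ol\partial$. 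This is what you say at the end of your proposal anyway, so the content is right; just drop the formula $\ol\partial\,\ol A=\overline{\partial A}$.
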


\begin{lemma}\label{le:dual}
\begin{itemize}
\item[(i)]
  The map $\ol A\cup \mbox{\textvisiblespace}$ from \eqref{eq:cup2} is {\em adjoint} to the map $A\cup \mbox{\textvisiblespace}$ from \eqref{eq:cup1}.
\item[(ii)]
  The map $\ol A\cup \mbox{\textvisiblespace}$ from \eqref{eq:cup2a} is {\em adjoint} to the map $A\cup \mbox{\textvisiblespace}$ from \eqref{eq:cup1a}.
\end{itemize}
\end{lemma}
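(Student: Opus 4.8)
The plan is to prove part (i) by a direct pointwise/integration computation, and then deduce part (ii) from (i) by observing that the harmonic projection is self-adjoint and that passing to cohomology classes only changes the maps by harmonic projections. Concretely, for part (i) I would fix a fiber $X=\cX_s$ with its \ke form, take test forms $\psi\in\cA^{0,n-p}(X,\Omega^p_X(\cK_X))$ and $\chi\in\cA^{0,n-p+1}(X,\Omega^{p-1}_X(\cK_X))$, and show
$$
\langle A\cup\psi,\chi\rangle = \langle \psi, \ol A\cup\chi\rangle,
$$
where $\langle\ ,\ \rangle$ is the $L^2$-inner product induced by $\omega_X$ on the respective bundle-valued forms (integrating the pointwise Hermitian pairing against $g\,dV$). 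Since the inner product is obtained by integration, it suffices to check the corresponding identity for the pointwise Hermitian inner products of the coefficient tensors, which reduces to an identity purely in multilinear algebra on the fiber of $\Lambda^\bullet T_X\otimes\Lambda^\bullet\ol{T}_X\otimes\cK_X$ at a single point.

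The key step is therefore the pointwise computation. Using the explicit formulas for $A\cup\mbox{\textvisiblespace}$ and $\ol A\cup\mbox{\textvisiblespace}$ in Definition~\ref{de:cup}: the action of $A$ contracts the holomorphic index $\alpha_p$ of $\psi$ against $A^\gamma_{\ \ol\beta_p}$ and produces a new antiholomorphic index $\ol\beta_p$, while the action of $\ol A$ on $\chi$ contracts the antiholomorphic index and produces a new holomorphic index via $\ol A^{\ol\delta}_{\ \alpha_1}$. When one writes out the pointwise pairing $\langle A\cup\psi,\chi\rangle$, raising and lowering indices with $g_{\alpha\ol\beta}$, the metric contraction that implements "$A$ produced $dz^{\ol\beta_p}$ and is paired against $dz^{\ol\beta_p}$ in $\chi$" combines with $A^\gamma_{\ \ol\beta_p}$ to give exactly $\ol{(\ol A^{\ol\delta}_{\ \gamma})}$ in the conjugated slot — i.e.\ moving $A$ across the pairing turns it into $\ol A$ acting on $\chi$, and the index bookkeeping matches precisely because of the symmetry $A_{\ol\beta\ol\delta}=A_{\ol\delta\ol\beta}$ recorded in \eqref{eq:symmA} (and the identity $(A^*)^{\ol\beta}_{\ \alpha}=\ol A^{\ol\beta}_{\ \alpha}$ noted just before Definition~\ref{de:cup}). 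One must also check the sign coming from the anticommuting $dz^{\ol\beta_p}$ factors and from the alternation conventions, but on both sides $\psi$ and $\chi$ are alternating in their respective index groups, so the signs are forced to agree; this is the only point requiring care.

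Given (i), part (ii) is short. For harmonic $\psi\in H^{n-p}(X,\Omega^p_X(\cK_X))$ and harmonic $\chi\in H^{n-p+1}(X,\Omega^{p-1}_X(\cK_X))$, the map in \eqref{eq:cup1a} sends $[\psi]$ to $[H(A\cup\psi)]$ and the map in \eqref{eq:cup2a} sends $[\chi]$ to $[H(\ol A\cup\chi)]$. Then
$$
\langle H(A\cup\psi),\chi\rangle = \langle A\cup\psi,\chi\rangle = \langle \psi,\ol A\cup\chi\rangle = \langle \psi, H(\ol A\cup\chi)\rangle,
$$
where the outer equalities use that the harmonic projection $H$ is the orthogonal projection onto the harmonic subspace and is hence self-adjoint, together with the fact that $\psi$ and $\chi$ are already harmonic, so $\langle H(A\cup\psi),\chi\rangle=\langle A\cup\psi,H\chi\rangle=\langle A\cup\psi,\chi\rangle$ and similarly on the other side; the middle equality is part (i). This is exactly the statement that $\ol A\cup\mbox{\textvisiblespace}$ on cohomology is adjoint to $A\cup\mbox{\textvisiblespace}$ on cohomology, so both parts are proved. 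The main obstacle is purely the index/sign verification in the pointwise identity of part (i); everything else is formal.
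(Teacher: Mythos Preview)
Your proposal is correct and follows essentially the same route as the paper: part (i) is a pointwise multilinear computation relying on the symmetry \eqref{eq:symmA} (which the paper states as ``a simple calculation depending on the symmetry \eqref{eq:symmA}''), and part (ii) is the same chain of equalities using self-adjointness of the harmonic projection and harmonicity of the test forms. The only difference is cosmetic---you write the chain starting from $A\cup\psi$ rather than $\ol A\cup\psi$---so there is nothing to add.
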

\begin{proof}
A simple calculation depending on the symmetry \eqref{eq:symmA} shows $(i)$. The $L^2$-inner product on the cohomology is defined in terms of harmonic representatives.  For any harmonic form $\chi \in \cA^{(p,n-p)}(X, \cK_X)$ we have $(H(\ol A\cup \psi), \chi)=(\ol A\cup \psi, \chi) = (\psi, A\cup \chi) = (\psi, H(A\cup \chi))$ which implies $(ii)$.
\end{proof}

The family of \ke metrics for the fibers of $f:\cX \to S$ defines natural hermitian metrics on the fibers of $R^{n-p}f_*\Omega^p_{\cX/S}(\cK_{\cX/S})$, namely the induced $L^2$-inner products for harmonic representatives of cohomology classes from $H^{n-p}(\cX_s, \Omega^p_{\cX_s})$.
Let $\psi^k, \psi^\ell \in \cA^{(0,n-p)}(\cX_s,\Omega^p_{\cX_s}(\cK_{\cX_s}))$ be harmonic, then
$$
\langle \psi^k,\psi^\ell\rangle := \int_{\cX_s} \psi^k_{A_p\ol B_q} \psi^\ol\ell_{\ol D_p C_{n-p}} g^{\ol D_p A_p} g^{\ol B_{n-p} C_{n-p} } dV,
$$
where $A_p = (\alpha_1,\ldots,\alpha_p)$ etc.\ and $dz^{A_p}= dz^{\alpha_1}\we\ldots\we dz^{\alpha_p}$ etc. If the coefficients $\psi^k_{A_p\ol B_{n-p}}$ etc.\ are already skew-symmetric, then $g^{\ol D_p A_p} = g^{\ol \delta_1 \alpha_1 }\cdot \ldots \cdot g^{\ol \delta_p \alpha_p }$ can be taken, otherwise this quantity denotes a determinant.

\subsection*{Computation of the curvature}

\begin{theorem}[{\cite[Theorem IV]{sch-preprint10}}]\label{th:curvgen}
Let $X$ denote a fiber $\cX_s$ at a point $s\in S$. Let the harmonic \ks form $A$ stand for a tangent vector of $S$ at $s$, and represent a vector of the fiber $R^{n-p}f_*\Omega^p_{\cX/S}(\cK_{\cX/S})\otimes \C(s)=H^{n-p}(X, \cE)$ by the harmonic form $\psi$.
\begin{itemize}
  \item [(i)]
The curvature tensor for $R^{n-p}f_*\Omega^p_{\cX/S}(\cK_{\cX/S})$ at $s$ is given by
\begin{eqnarray}\label{eq:curvgen}
R(A,\ol A,\psi,\ol\psi)&=& \int_{X}
\left(\left( \Box  + 1 \right)^{-1}(A\cdot \ol A)\right) \cdot(\psi \cdot \ol
\psi) g\/ dV\nonumber\\
&& \quad +  \int_{X}\left( \left( \Box  + 1 \right)^{-1} (A\cup\psi) \right)
\cdot (\ol A \cup \ol\psi) g\/ dV \\
&& \quad +  \int_{X} \left(\left( \Box  - 1 \right)^{-1}
(A\cup\ol\psi)\right)\cdot (\ol A \cup \psi) g\/ dV.
\nonumber
\end{eqnarray}
The general curvature formula can be derived by polarization.
All non-zero eigenvalues of Laplacians occurring in the third term are greater than one.
\item [(ii)]
The only contribution in \eqref{eq:curvgen}, which may be negative,
originates from the harmonic projections $H(A\cup \ol \psi)$ in the third term. It equals
$$
- \int_{X} H(A\cup\ol\psi) \ol{H(A\cup\ol\psi)} g dV= -\|H(\ol A\cup \psi)\|^2.
$$
\end{itemize}
\end{theorem}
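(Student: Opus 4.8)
The plan is to compute the Chern curvature of the Hodge bundle $R^{n-p}f_*\Omega^p_{\cX/S}(\cK_{\cX/S})$ directly, using the harmonic representatives provided by Proposition~\ref{pr:harmsec}. Concretely, I would fix $s\in S$, choose a local holomorphic coordinate $s$ on the base (reducing to embedding dimension one by the base change property, as in the proof of Theorem~\ref{th:fibint}), and pick a frame of the bundle near $s$ represented by a form $\psi$ in $\cA^{(0,n-p)}(\cX,\cE)$ that is $\ol\pt$-closed and fiberwise harmonic. The Chern connection is then $\nabla_s \psi = (\partial_s \psi)^{\mathrm{harm}}$-part, and the curvature is $R = -\,\partial_{\ol s}\langle \nabla_s \psi,\psi\rangle + \langle \nabla_s\psi,\nabla_s\psi\rangle/\langle\psi,\psi\rangle$ in the usual way; equivalently one differentiates the metric tensor $h_{k\ol\ell}=\langle\psi^k,\psi^\ell\rangle$ twice. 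The essential input is that the $s$-derivative of the fiberwise harmonic form $\psi$ fails to be harmonic, and the obstruction is governed by the \ks form $A=A_s$ via the cup products of Definition~\ref{de:cup}: one shows $\partial_s\psi$, restricted to a fiber and projected onto the non-harmonic part, is controlled by $A\cup\psi$ and by the Lie-derivative terms $A\cdot\ol A$ coming from varying the metric on $\cK_{\cX/S}$ (whose curvature is $\omega_\cX$, restricting to the \ke form on fibers by Proposition~\ref{re:omX}).

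The computation then proceeds by the standard Kodaira-Spencer/Green's-operator bookkeeping: after integrating by parts on the fiber and using the Bochner-Kodaira commutation relations for the \ke metric with Ricci curvature $-1$, the second derivative of $h_{k\ol\ell}$ splits into three types of terms. The first, $\int_X ((\Box+1)^{-1}(A\cdot\ol A))\cdot(\psi\cdot\ol\psi)\,g\,dV$, comes from the variation of the Hermitian metric on $\cK_{\cX/S}$ and mirrors exactly the mechanism behind Proposition~\ref{pr:laplphi} (the operator $\Box+1$ appears because $\mathrm{Ric}=-1$); note this is where $\varphi_{s\ol s}=(\Box+1)^{-1}\|A_s\|^2$-style identities get used. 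The second term, $\int_X((\Box+1)^{-1}(A\cup\psi))\cdot(\ol A\cup\ol\psi)\,g\,dV$, arises from the $\ol\pt$-exact part of the variation, with the Green's operator $(\Box+1)^{-1}$ on $(n-p+1)$-forms of the relevant bundle degree. The third term, $\int_X((\Box-1)^{-1}(A\cup\ol\psi))\cdot(\ol A\cup\psi)\,g\,dV$, comes from the mixed $A\cup\ol\psi$ piece; here the shift is $\Box-1$ rather than $\Box+1$ because the Weitzenböck constant changes sign when the cup product lands in the conjugate bidegree, and one must check that $\Box-1$ is invertible on the relevant space — precisely the assertion that all nonzero eigenvalues in the third term exceed $1$, which follows from a Bochner/eigenvalue estimate for the twisted Laplacian using the Kodaira-Nakano identity on the \ke fiber. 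Part~(ii) is then immediate: the operator $(\Box-1)^{-1}$ is positive on the orthocomplement of its kernel, so the only possibly-negative contribution is the kernel part, i.e.\ the harmonic projection; evaluating $(\Box-1)^{-1}$ on $H(A\cup\ol\psi)$ formally gives $-\,H(A\cup\ol\psi)$ (the ``eigenvalue $0$'' is sent to $-1$ by $(\Box-1)^{-1}$ in the distributional sense used here), yielding $-\int_X H(A\cup\ol\psi)\,\ol{H(A\cup\ol\psi)}\,g\,dV = -\|H(A\cup\ol\psi)\|^2 = -\|H(\ol A\cup\psi)\|^2$, the last equality by the adjointness in Lemma~\ref{le:dual}(ii) and the symmetry \eqref{eq:symmA}.

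The main obstacle, I expect, is the careful derivation of the third term and especially the invertibility of $\Box-1$: one has to run the Bochner-Kodaira-Nakano identity for the bundle $\Omega^p_{X}(\cK_X)$ twisted by the fiberwise \ke metric, keep precise track of the curvature term (which, for the \ke metric with $\mathrm{Ric}=-1$, contributes a definite constant), and argue that the resulting Laplacian on the space where $A\cup\ol\psi$ lives has spectrum bounded below away from $0$ except on the harmonic part — and there only the eigenvalue $1$ is excluded, not the harmonic (eigenvalue $0$) part. Managing the index gymnastics in Definition~\ref{de:cup} so that the two cup products are genuinely adjoint (Lemma~\ref{le:dual}) and so that the harmonic projections assemble into the clean bilinear form above is the other delicate point. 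Once the three-term decomposition is in place, part~(ii) and the polarization to the full curvature tensor are routine.
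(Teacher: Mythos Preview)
Your strategy is correct and is essentially the paper's: differentiate the $L^2$-metric twice, identify the failure of fiberwise harmonicity via the cup products $A\cup\psi$ and $\ol A\cup\psi$, and resolve with Green's operators shifted by $\pm 1$ from the Bochner--Kodaira--Nakano identity on the \ke fiber; part~(ii) then follows by eigenvector expansion exactly as you say.

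The one organizing device you have not named, and which replaces your somewhat ambiguous ``$\partial_s\psi$'', is the use of Lie derivatives along the \emph{horizontal lift} $v$ of $\partial/\partial s$ with respect to $\omega_\cX$ (Lemma~\ref{le:horli}). This choice makes $L_v(g\,dV)=0$ and gives the clean three-term split
\[
\partial_{\ol s}\partial_s\langle\psi^k,\psi^\ell\rangle
= \langle L_{[\ol v,v]}\psi^k,\psi^\ell\rangle
- \langle L_{\ol v}\psi^k,L_{\ol v}\psi^\ell\rangle
+ \langle L_v\psi^k,L_v\psi^\ell\rangle,
\]
which is precisely the three integrals of \eqref{eq:curvgen}. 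The commutator term is $-\langle\varphi\,\psi^k,\psi^\ell\rangle$ with $\varphi=(\Box+1)^{-1}(A\cdot\ol A)$ by Lemma~\ref{le:vvb} (this is indeed where Proposition~\ref{pr:laplphi} enters). For the other two one decomposes $L_v\psi=(L_v\psi)'+(L_v\psi)''$ by type and uses the exact identities $(L_v\psi)''=A\cup\psi$, $(L_{\ol v}\psi)''=(-1)^p\ol A\cup\psi$, and $(L_{\ol v}\psi)'=(-1)^p\ol\pt(\ol v\cup\psi)$ (so $\ol\pt$-exact), together with a short list of $\ol\pt$/$\pt$-identities relating these pieces. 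The ``Weitzenb\"ock constant'' you allude to is the specific relation $\Box_\pt=\Box_{\ol\pt}+(n-p-q)\,\mathrm{id}$ on $\cK_{\cX_s}$-valued $(p,q)$-forms, which on the two relevant bidegrees gives the shifts $+1$ and $-1$ and simultaneously yields the eigenvalue bound you need for $(\Box-1)^{-1}$. Without the horizontal lift the bookkeeping is substantially harder; with it, your outline becomes the paper's proof.
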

For $p=0$ the second term in \eqref{eq:curvgen} is equal to zero, and for $p=n$ the third one does not occur.

Part $(ii)$ of the theorem is a direct consequence of the first statement. It follows from an eigenvector expansion of $A\cup \ol \psi$.

For $p=n$ the Kodaira-Nakano vanishing theorem together with the Grauert comparison theorem imply that $f_*{\cK^{\otimes 2}_{\cX/S}}$ is locally free for any base space.
\begin{corollary}
  The sheaf $f_*{\cK^{\otimes 2}_{\cX/S}}$ is Nakano positive.
\end{corollary}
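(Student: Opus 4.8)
The plan is to specialize the curvature formula \eqref{eq:curvgen} of Theorem~\ref{th:curvgen} to $p=n$ and read off positivity. For $p=n$ one has $\Omega^n_{\cX/S}(\cK_{\cX/S})=\cK^{\otimes 2}_{\cX/S}$ and $R^{n-p}f_*=f_*$, and, as noted right after the theorem, the third — potentially negative — term of \eqref{eq:curvgen} is absent. Polarizing \eqref{eq:curvgen}, the curvature of $f_*\cK^{\otimes 2}_{\cX/S}$ with its $L^2$-metric, evaluated on a general vector $v=\sum_i \pt/\pt s^i\otimes\psi^i\in T_sS\otimes H^0(X,\cK_X^{\otimes 2})$ (with $A_i$ the harmonic \ks form representing $\pt/\pt s^i$ and $\psi^i$ a holomorphic section of $\cK_X^{\otimes 2}$), then equals
$$
\sum_{i,j}\int_X\Big((\Box+1)^{-1}(A_i\cdot\ol A_j)\Big)(\psi^i\cdot\ol\psi^j)\,g\,dV\;+\;\Big\|(\Box+1)^{-1/2}\Big(\textstyle\sum_i A_i\cup\psi^i\Big)\Big\|^2 .
$$
The second summand is a squared $L^2$-norm, so the whole issue is the non-negativity of the first summand.

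For the first summand I would use two inputs. First, the operator $(\Box+1)^{-1}$ on $\cinf(X)$ is self-adjoint with a \emph{strictly positive} integral kernel $\Gamma(x,y)$, since $\Gamma(x,y)=\int_0^\infty e^{-t}p_t(x,y)\,dt$ with $p_t>0$ the heat kernel of $\Box$. Second, for each fixed $y$ the Hermitian matrix $\big((A_i\cdot\ol A_j)(y)\big)_{ij}$ is positive semi-definite, being the Gram matrix of the $A_i(y)$ under the natural pointwise Hermitian product, and likewise $\big((\psi^i\cdot\ol\psi^j)(x)\big)_{ij}$ is positive semi-definite for each $x$. Rewriting the first summand as
$$
\int_{X\times X}\Gamma(x,y)\Big(\textstyle\sum_{i,j}(A_i\cdot\ol A_j)(y)\,(\psi^i\cdot\ol\psi^j)(x)\Big)\,g\,dV(y)\,g\,dV(x),
$$
the bracketed inner sum is, for fixed $(x,y)$, the sum of all entries of the Hadamard product of two positive semi-definite Hermitian matrices, hence $\geq 0$ by the Schur product theorem. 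Since $\Gamma>0$, the integral is $\geq 0$. This yields Nakano semi-positivity of the curvature form of $f_*\cK^{\otimes 2}_{\cX/S}$ for an arbitrary holomorphic family $f:\cX\to S$.

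To upgrade this to genuine Nakano positivity for effectively parameterized families — in particular over base spaces of universal deformations, hence on the moduli space $\cM$ — suppose the total curvature vanishes on some $v\neq 0$. Then both summands vanish; by strict positivity of $\Gamma$ and continuity, $\sum_{i,j}(A_i\cdot\ol A_j)(y)(\psi^i\cdot\ol\psi^j)(x)=0$ for all $x,y$, and integrating in $y$ against $g\,dV$ gives $\sum_{i,j}\langle A_i,A_j\rangle_{L^2}(\psi^i\cdot\ol\psi^j)(x)=0$ for every $x$. Here $\big(\langle A_i,A_j\rangle_{L^2}\big)_{ij}$ is the \wp Gram matrix of the \ks forms, which is positive \emph{definite} precisely because the family is effectively parameterized; the sum of the entries of the Hadamard product of a positive definite matrix with a positive semi-definite one vanishes only if the latter is the zero matrix. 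Hence $(\psi^i\cdot\ol\psi^j)(x)=0$ for all $x$, so all $\psi^i$ vanish identically and $v=0$, a contradiction.

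The main obstacle is the non-negativity of the first summand: unlike the second it is not manifestly a sum of squares, since $A_i\cdot\ol A_j$ and $\psi^i\cdot\ol\psi^j$ are different functions and the term is not of the form $\langle(\Box+1)^{-1}\eta,\eta\rangle$. The argument above circumvents this by combining heat-kernel positivity of the resolvent of $\Box$ with the Schur product theorem; some care with the Hermitian/conjugation conventions in the contractions $A_i\cdot\ol A_j$ and in the cup products will also be needed. (In contrast to the range $0\le p<n$, for $p=n$ the Kodaira–Nakano vanishing theorem and Grauert comparison already give local freeness of $f_*\cK^{\otimes 2}_{\cX/S}$ unconditionally, so no genericity restriction on $S$ enters the above.)
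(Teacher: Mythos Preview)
Your argument is correct, and in fact more carefully worked out than the paper's own two-line proof. The paper simply asserts that ``the second term of \eqref{eq:curvgen} gives Nakano-positivity immediately'' and adds that ``for an estimate one can use the first term using the Cheeger--Yau estimates for the heat kernel''. So the paper leans on the second term (which, after polarization, is manifestly a squared norm) for positivity and only alludes to the heat kernel for quantitative bounds, without spelling out how strict positivity is obtained.

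Your route is complementary: you observe that the second term is a squared norm (hence $\geq 0$) but then do the real work on the \emph{first} term. Writing $(\Box+1)^{-1}$ via its strictly positive integral kernel and invoking the Schur product theorem to see $\sum_{i,j}(A_i\cdot\ol A_j)(y)(\psi^i\cdot\ol\psi^j)(x)\geq 0$ is a clean way to get semi-positivity; your strictness step is also correct --- the identity $\sum_{i,j}P_{ij}Q_{ij}=\mathrm{tr}(P\,\ol Q)$ for Hermitian $P,Q$ shows that this vanishes with $P$ positive definite only when $Q=0$, hence all $\psi^i\equiv 0$. This avoids having to argue directly that $\sum_i A_i\cup\psi^i\neq 0$ for $v\neq 0$, which is what the paper's ``immediately'' tacitly requires for the second term alone to yield \emph{strict} Nakano positivity. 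Both approaches invoke heat-kernel positivity of the resolvent; yours makes the role of the first term and of effective parameterization completely explicit.
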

\begin{proof}
  The second term of \eqref{eq:curvgen} gives Nakano-positivity immediately. For an estimate one can use the first term using the Cheeger-Yau estimates for the heat kernel \cite{c-y}.
\end{proof}

We read an immediate estimate off \eqref{eq:curvgen} using the second and third term.
\begin{corollary}\label{co:curvest}
\strut\vspace{-5pt}
\begin{equation}\label{eq:curvest}
R(A,\ol A, \psi,\ol \psi)\geq  \|H(A\cup \psi)\|^2 - \|H(\ol A \cup \psi )\|^2.
\end{equation}
\end{corollary}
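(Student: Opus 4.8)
The plan is to read the inequality off the three summands of the curvature formula \eqref{eq:curvgen} in Theorem~\ref{th:curvgen}, bounding each from below on the fixed fiber $X=\cX_s$ and retaining only the harmonic parts of the relevant eigenform expansions. For the \emph{first term}, the integrand is a product of the two pointwise nonnegative functions $A\cdot\ol A=\|A\|^2$ and $\psi\cdot\ol\psi=\|\psi\|^2$, with $(\Box+1)^{-1}$ applied to the former; since $\Box+1$ is invertible with a positivity-preserving resolvent (maximum principle for the scalar Laplacian), $(\Box+1)^{-1}(A\cdot\ol A)\geq 0$ pointwise, so the first term is $\geq 0$ and is simply discarded.

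For the \emph{second term}, set $u:=A\cup\psi\in\cA^{0,n-p+1}(X,\Omega^{p-1}_X(\cK_X))$. Once the pairing ``$\cdot$'' is matched against conjugation, this term equals the Hermitian quadratic form $\langle(\Box+1)^{-1}u,u\rangle_{L^2}$, the underlying adjointness being Lemma~\ref{le:dual}(i). Expanding $u=\sum_\lambda u_\lambda$ in eigenforms $\Box u_\lambda=\lambda u_\lambda$ gives $\langle(\Box+1)^{-1}u,u\rangle=\sum_\lambda(\lambda+1)^{-1}\|u_\lambda\|^2\geq\|u_0\|^2=\|H(A\cup\psi)\|^2$, all $\lambda>0$ contributions being dropped. (For $p=0$ this term is absent and $H(A\cup\psi)=0$, so nothing is lost.)

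For the \emph{third term}, I would invoke the spectral statement already contained in Theorem~\ref{th:curvgen}(i): on the bundle in question every nonzero eigenvalue of $\Box$ exceeds $1$, so $(\Box-1)^{-1}$ is well defined, acting as $-\mathrm{id}$ on harmonic forms and with positive eigenvalues $(\lambda-1)^{-1}$ on the remaining eigenspaces. Writing $v:=A\cup\ol\psi=H(v)+\sum_{\lambda>1}v_\lambda$, the term becomes $-\|H(v)\|^2+\sum_{\lambda>1}(\lambda-1)^{-1}\|v_\lambda\|^2\geq-\|H(v)\|^2$; this is exactly the content of Theorem~\ref{th:curvgen}(ii), and by the adjointness of Lemma~\ref{le:dual} one has $\|H(v)\|=\|H(\ol A\cup\psi)\|$. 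Summing the three lower bounds yields $R(A,\ol A,\psi,\ol\psi)\geq\|H(A\cup\psi)\|^2-\|H(\ol A\cup\psi)\|^2$, which is \eqref{eq:curvest}.

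The one point requiring more than bookkeeping is the \emph{third term}: everything hinges on the estimate $\lambda>1$ for the nonzero spectrum, which simultaneously guarantees that $(\Box-1)^{-1}$ is meaningful on $A\cup\ol\psi$ and that its non-harmonic part carries the favorable sign, so that the single negative contribution is precisely $-\|H(\ol A\cup\psi)\|^2$. Since that spectral gap is a Bochner--Kodaira-type input already packaged into Theorem~\ref{th:curvgen}, within the present excerpt the corollary really is immediate; were one to prove it from scratch, establishing this eigenvalue bound would be the main obstacle.
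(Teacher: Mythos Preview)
Your argument is correct and matches the paper's own reasoning: the paper merely states that the estimate is ``read off \eqref{eq:curvgen} using the second and third term,'' with the first term being manifestly nonnegative, and your eigenform expansions make this explicit. One small remark: the identity $\|H(A\cup\ol\psi)\|=\|H(\ol A\cup\psi)\|$ that you need follows most directly from complex conjugation (as in Theorem~\ref{th:curvgen}(ii)) rather than from the adjointness of Lemma~\ref{le:dual}, though the conclusion is of course the same.
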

We will discuss the proof of Theorem~\ref{th:curvgen} in Section~\ref{se:proofcurv}.

\begin{corollary}
  The statement of Theorem~\ref{th:curvgen} holds for any family of canonically polarized manifolds over a reduced space, if the twisted Hodge sheaves $R^{n-p}f_* \Omega^p_{\cX/S}(\cK_{\cX/S})$ are locally free.
\end{corollary}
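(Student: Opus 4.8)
The plan is to reduce the assertion to the case of a smooth base, which is the setting of the computation underlying \cite[Theorem~IV]{sch-preprint10} to be discussed in Section~\ref{se:proofcurv}, by the same device already used for Proposition~\ref{pr:relka} and Theorem~\ref{th:fibint}. Since the statement is local on $S$, I would fix $s_0\in S$, take $m$ to be the embedding dimension of $S$ at $s_0$, and realize $S$ as a locally closed analytic subspace of a smooth space $W\subset\C^m$. On the dense regular locus $S_{\mathrm{reg}}$, a complex manifold, the formula \eqref{eq:curvgen} is already available, so the real point is to make sense of, and verify, the statement along the singular locus of $S$.

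First I would recall that $\cE=R^{n-p}f_*\Omega^p_{\cX/S}(\cK_{\cX/S})$, being locally free by hypothesis, is a holomorphic vector bundle on the reduced space $S$; by \eqref{eq:basechange} its fibre at $s$ is $H^{n-p}(\cX_s,\Omega^p_{\cX_s}(\cK_{\cX_s}))$, by Proposition~\ref{pr:harmsec} a local holomorphic frame is represented by $\db$-closed forms $\psi^i$ on $\cX$ with fibrewise harmonic restrictions, and the $L^2$-metric $h_{i\ol\jmath}(s)=\int_{\cX_s}\langle\psi^i_s,\psi^j_s\rangle\,g\,dV$ is, by Proposition~\ref{pr:relka} and its proof, the restriction to $S$ of a differentiable function on $W$, since the family of \ke metrics depends differentiably on $s\in S$ as the restriction of differentiable data over $W$. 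Hence the Chern curvature of $(\cE,h)$ is a well-defined $\mathrm{End}(\cE)$-valued $(1,1)$-form on the reduced space $S$: its coefficients are restrictions from $W$ of those of a Chern connection and curvature of a smooth hermitian extension of $\cE$ over $W$, and any two such restrictions agree because they agree on the dense manifold $S_{\mathrm{reg}}$.

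Then I would extend the geometric data as in the proof of Proposition~\ref{pr:relka}: over $W$ there is a differentiable family of almost complex structures on the underlying manifold of $X=\cX_{s_0}$, integrable for $s\in S$, a complex line bundle over $X\times W$ that is holomorphic and equal to $\cK_{\cX/S}$ over $S$, extensions of $\pt$ and $\db$, and a differentiable family of \ke volume forms. Consequently the horizontal lifts $u_k$ of $\pt/\pt s^k$, the \ks forms $A_k=\db(u_k)|\cX_s$, the cup products $A\cup\,\cdot\,$ and $\ol A\cup\,\cdot\,$ of Definition~\ref{de:cup}, the fibrewise Laplacian $\Box$ and the operators $(\Box\pm 1)^{-1}$, and the harmonic projections all make sense as differentiable families over $W$ and, restricted to $S$, reduce to the objects of Theorem~\ref{th:curvgen}. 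The computation of \cite[Theorem~IV]{sch-preprint10} — differentiate $h_{i\ol\jmath}$ twice in the $W$-directions, rewrite via the horizontal lifts, integrate by parts on the fibres, and use the infinitesimal \ke identity of Lemma~\ref{le:harmks} together with Proposition~\ref{pr:laplphi} — then goes through verbatim over the smooth space $W$ and produces the asserted expression, whose right-hand side involves only the \ke geometry of the fibres $\cX_s$ with $s\in S$. Restricting to $S$ and pairing with a tangent vector of $S$ at $s$, represented via $\rho_s$ by the harmonic \ks form $A$, and with a vector of $\cE\otimes\C(s)$, represented by the harmonic $\psi$, yields \eqref{eq:curvgen}; since the right-hand side is intrinsic to the fibres over $S$, it does not depend on the chosen $W$ or on the chosen extensions.

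The hard part, as I see it, is not the differential-geometric computation — that is exactly the one in Theorem~\ref{th:curvgen}, transplanted to $W$ — but the bookkeeping around the singular locus: checking that the partial structures over $W$ from Proposition~\ref{pr:relka} carry enough regularity for that computation, and, conceptually, that the Chern curvature of $\cE$ over the reduced, possibly non-normal base $S$ really is the well-defined object computed in this way, independent of all choices. I would handle this by systematically restricting to the dense complex manifold $S_{\mathrm{reg}}$, where everything is classical, and using that smooth forms on a reduced complex space — connection and curvature coefficients among them — are restrictions of smooth forms from a smooth ambient space and hence are determined by their values on $S_{\mathrm{reg}}$.
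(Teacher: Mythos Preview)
Your proposal is correct and aligned with the paper's overall method, but the paper's own proof is a single line: it simply invokes the base change theorem (cf.\ Remark~\ref{re:basech}). The point is that the infrastructure for reduced, possibly singular, base spaces --- the smooth ambient embedding $S\subset W$, the $\cinf$ dependence of the \ke metrics (Proposition~\ref{pr:relka}), the horizontal lifts and harmonic \ks forms (Lemmas~\ref{le:horli}, \ref{le:harmks}), the fibre integral formula (Theorem~\ref{th:fibint}), and the representation of sections by fibrewise harmonic forms (Proposition~\ref{pr:harmsec}) --- has already been set up earlier in the paper to work over reduced bases. Once the twisted Hodge sheaves are locally free, Remark~\ref{re:basech} supplies the one missing ingredient (identification of stalks with cohomology on fibres), and the computation of Section~\ref{se:proofcurv} applies verbatim. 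Your detailed argument is essentially an unpacking of what the paper already takes as established; it is not wrong, just more than is needed here.
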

The {\em proof} follows from the base change theorem (cf.\ Remark~\ref{re:basech}). \qed

We will use it for subspaces of base spaces of universal deformations.

\section{Variation of Hodge structure for families of Calabi-Yau manifolds -- an analogy}
In a geometric situation of a proper, holomorphic family $f:\cX \to S$ we consider the Hodge bundles $E^q= R^qf_*\Omega^{n-q}_{\cX/s}$. These carry  connections arising from the flat connection on $R^n f_*\C$. In fact these are induced by the family of \ke metrics on the fibers. As above we denote by $A$ a harmonic \ks form representing a tangent vector of a point $s\in S$. Again we set $X=\cX_s$. The cup product with $A$ determines a map
\begin{equation}\label{eq:cup1c}
A\cup \mbox{\textvisiblespace} : H^{n-p}(X,\Omega^p_X) \to H^{n-p+1}(X,\Omega^{p-1}_X).
\end{equation}

\begin{theorem}[{Griffiths {\cite[(5.2)]{gr:pe}}}]
The curvature tensor $R_h$ of a Hodge bundle $E^q= R^qf_*\Omega^{n-q}_{\cX/S}$ at a point $s\in S$ is given by
\begin{equation}\label{eq:curvHo}
R_h(A,\ol A, \psi, \ol \psi) = \|H(A\cup \psi)\|^2  - \|H((A\cup)^t\psi)\|^2,
\end{equation}
where $(A\cup)^t: H^{n-p}(X, \Omega^p_{X})\to H^{n-p-1}(X, \Omega^{p+1}_{X})$ denotes the adjoint map with respect to the $L^2$ inner product.
\end{theorem}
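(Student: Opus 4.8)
The plan is to carry out Griffiths' original computation, realizing the Hodge bundle $E^q$ as a $C^\infty$ subbundle of the flat Gauss--Manin bundle and extracting its curvature from the flatness of the ambient connection. First I would set up the local system $R^nf_*\C$, its associated holomorphic bundle $\cH = R^nf_*\C\otimes_\C\cO_S$ equipped with the flat Gauss--Manin connection $\nabla$, the Hodge filtration $F^\bullet\subset\cH$, and the $C^\infty$ Hodge decomposition $\cH=\bigoplus_{p+q=n}\cH^{p,q}$ in which the fibre $\cH^{p,q}_s$ is the space of $\omega_{\cX_s}$-harmonic $(p,q)$-forms on $X=\cX_s$. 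The polarization induces a Hermitian metric on $\cH$ that is orthogonal for the Hodge decomposition and positive definite on each $\cH^{p,q}$; on $\cH^{n-q,q}\cong E^q$ it coincides with the $L^2$-metric of the statement. The preliminary point to check is that, under this identification, the Chern connection $D$ of the Hermitian holomorphic bundle $E^q$ equals the orthogonal projection $\pi_{n-q,q}\circ\nabla$ of the Gauss--Manin connection: metric compatibility follows because $\nabla$ preserves the polarization form, and $D^{0,1}=\db_{E^q}$ follows from Griffiths transversality, which forbids a $\cH^{n-q-1,q+1}$-component in $\nabla^{0,1}$ applied to $\cH^{n-q,q}$.

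Next I would invoke Griffiths transversality $\nabla F^p\subseteq F^{p-1}\otimes\Omega^1_S$ together with its conjugate to decompose $\nabla=D+\theta+\bar\theta$ on each $\cH^{p,q}$, where the second fundamental forms are tensorial maps $\theta\colon\cH^{p,q}\to\cH^{p-1,q+1}\otimes\cA^{(1,0)}$ and $\bar\theta\colon\cH^{p,q}\to\cH^{p+1,q-1}\otimes\cA^{(0,1)}$. Two inputs about them are needed. First, the contraction of $\theta$ with a tangent vector $\pt/\pt s$, represented by the harmonic \ks form $A$, is precisely the cup-product map $A\cup\colon H^{n-p}(X,\Omega^p_X)\to H^{n-p+1}(X,\Omega^{p-1}_X)$ of \eqref{eq:cup1c}; this is the differential-geometric core and I would verify it by differentiating harmonic representatives along the horizontal lift of $\pt/\pt s$, exactly in the spirit of Lemma~\ref{le:horli} and Lemma~\ref{le:harmks}. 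Second, differentiating the vanishing pairings $Q(\cH^{p,q},\overline{\cH^{p-1,q+1}})\equiv 0$ shows that $\bar\theta$ is, up to the sign prescribed by the Hodge--Riemann bilinear relations, the $L^2$-adjoint of $\theta$; in particular, after contracting with $A$ and $\bar A$, the operators become $A\cup$ and, up to sign, $(A\cup)^t$.

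Then comes the curvature itself. Flatness gives $0=\nabla^2=(D+\theta+\bar\theta)^2$, and I would read off the component that maps $\cH^{n-q,q}$ into $\cH^{n-q,q}\otimes\cA^{(1,1)}$: the diagonal part is the Chern curvature $R_D$ of $E^q$ (pure $(1,1)$, since $(D^{1,0})^2=(D^{0,1})^2=0$ drop out for the same reason), the terms $D\theta+\theta D$, $\theta\wedge\theta$ and their conjugates land in other Hodge components and drop out, and the surviving cross-terms are $\theta\wedge\bar\theta$ and $\bar\theta\wedge\theta$, so that $R_D=-(\theta\wedge\bar\theta+\bar\theta\wedge\theta)$. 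Evaluating in the directions $A$ and $\bar A$ and pairing with $\psi$, one composition returns $\|\theta_A\psi\|^2$ and the other $\|\bar\theta_{\bar A}\psi\|^2$; since $\theta_A\psi$ and $\bar\theta_{\bar A}\psi$ automatically lie in the harmonic subspaces $\cH^{n-q-1,q+1}$ and $\cH^{n-q+1,q-1}$, these equal $\|H(A\cup\psi)\|^2$ and $\|H((A\cup)^t\psi)\|^2$. Inserting the signs from the Hodge--Riemann relations makes the first enter with a plus and the second with a minus, which is \eqref{eq:curvHo}.

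I expect the genuine obstacle to be the sign bookkeeping that occupies the last two steps: keeping track of which cross-term corresponds to raising and which to lowering the Hodge type, and threading the alternating signs $\epsilon_{p,q}$ of the Hodge--Riemann form through the adjunction $\bar\theta=\pm\theta^*$, so that the curvature emerges with the characteristic asymmetry $\|H(A\cup\psi)\|^2-\|H((A\cup)^t\psi)\|^2$ rather than a symmetric or oppositely signed expression. The identification of $\theta$ with cup product is the most substantial conceptual ingredient, but it is by now classical; everything else is formal once the flat-bundle picture and transversality are in place.
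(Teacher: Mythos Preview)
The paper does not supply its own proof of this theorem: it is stated with attribution to Griffiths \cite[(5.2)]{gr:pe} and immediately used as a classical input, the only addition being Remark~\ref{re:cup}, which identifies $(A\cup)^t$ with $\ol A\cup\mbox{\textvisiblespace}$. So there is nothing in the paper to compare your argument against.

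Your outline is essentially Griffiths' original argument --- realize $E^q$ as a $C^\infty$ summand of the flat Gauss--Manin bundle, use transversality to split $\nabla=D+\theta+\bar\theta$, read off $R_D=-(\theta\wedge\bar\theta+\bar\theta\wedge\theta)$ from $\nabla^2=0$, identify $\theta$ with the \ks cup product, and use the Hodge--Riemann signs to obtain the adjunction $\bar\theta=\pm\theta^*$. This is correct and is the standard route; your caution about the sign bookkeeping is well placed, since that is indeed where most presentations are either sloppy or opaque. One small comment: when you say ``$\theta_A\psi$ and $\bar\theta_{\bar A}\psi$ automatically lie in the harmonic subspaces'', this is true by construction (the $\cH^{p,q}$ are \emph{defined} as spaces of harmonic forms and $\theta,\bar\theta$ are the projections of $\nabla$ onto them), so the harmonic projection $H$ in the final formula is redundant on the level of the flat-bundle computation; it becomes relevant only when one rewrites the answer intrinsically in terms of the cup product $A\cup\psi$ on Dolbeault representatives, which need not be harmonic before projection.
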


It is a classical result that the above computation of the curvature of period domains and the validity of a Torelli theorem imply the hyperbolicity of the moduli space of polarized Calabi-Yau manifolds.

\begin{remark}\label{re:cup}
Like in Lemma~\ref{le:dual} one can see that also for Calabi-Yau manifolds the adjoint $(A\cup)^t\mbox{\textvisiblespace}$ on the level of cohomology is the cup product with $\ol A$. Now \eqref{eq:curvHo} reads
\begin{equation}
  R_{h}(A,\ol A, \psi, \ol \psi) = \|H(A\cup \psi)\|^2  - \|H(\ol A\cup\psi)\|^2.
\end{equation}
\end{remark}
This, and the application to the sheaves $R^pf_*\Lambda^p\cT_{\cX/S}$ was a motivation to compute the curvature of twisted Hodge bundles in \cite{sch-preprint10}.

\section{Higher order \ks maps}
Let $f:\cX \to S$ be a holomorphic family of canonically polarized varieties $\cX_s$, $s\in S$ of complex dimension $n$. Denote by $\rho : T_sS \to H^1(\cX_s, \cT_{\cX_s})$ the \ks map. Let $u_i\in T_{S,s}$ be tangent vectors and $A_i$ (harmonic) representatives of $\rho(u_i)$.
\begin{definition}\label{de:genks}
The generalized \ks maps are defined on the symmetric powers of the tangent spaces.
\begin{eqnarray}
  \rho^{(p)}:S^p(T_{S,s})  &\to & H^p(\cX_s,\Lambda^p\cT_{\cX_s})\enspace \\ \nonumber
  \vin \hspace{5mm} & & \hspace{5mm}\vin \\
  \enspace u_1\otimes\ldots\otimes u_p  &\mapsto & [A_1\we\ldots\we A_p]=[A^{\alpha_1}_{1\ol \beta_1}\pt_{\alpha_1}dz^{\ol{\beta}_1} \we \ldots \we A^{\alpha_p}_{p\ol \beta_p}\pt_{\alpha_p}dz^{\ol{\beta}_p}] . \nonumber
\end{eqnarray}

\end{definition}
The generalized \ks maps are induced by the classical \ks map together with the natural morphisms $S^pH^1(\cX_s,\cT_{\cX_s}) \to  H^p(\cX_s,\Lambda^p\cT_{\cX_s})$.
\begin{remark}
  The \ke metrics on the fibers induce \wp metrics on the symmetric spaces $S^pH^1(\cX_s,\cT_{\cX_s})$, namely
  $$
  \|u_1\otimes \ldots\otimes u_p\|^2_{WP} := \|H(A_1\we\ldots A_p)\|^2.
  $$
\end{remark}
\begin{notation*}
For any harmonic \ks form $A\in \cA^{(0,1)}(X,\cT_X)$ we define
\begin{equation}\label{eq:Ap}
A^p :=H(A\we\ldots\we A).
\end{equation}
\end{notation*}

\subsection*{Curvature formula}
The cup products \eqref{eq:cup1} and \eqref{eq:cup2} from Definition~\ref{de:cup} yield wedge products on the dual spaces.
\begin{eqnarray}
A\we \mbox{\textvisiblespace} :
&&\cA^{0,p}(\cX_s,\Lambda^p\cT_{\cX_s}) \to
\cA^{0,p+1}(\cX_s,\Lambda^{p+1}\cT_{\cX_s})\label{eq:we1}\\ \ol A \we \mbox{\textvisiblespace} :
&&\cA^{0,p+1}(\cX_s,\Lambda^{p+1}\cT_{\cX_s}) \to
\cA^{0,p}(\cX_s,\Lambda^{p}\cT_{\cX_s}).\label{eq:we2}
\end{eqnarray}
Here, in  \eqref{eq:we1} the wedge product stands for an exterior product,
whereas in \eqref{eq:we2} the wedge product contains a contraction with
both indices of $A_i$. Again for $p=n$ in \eqref{eq:we1} and \eqref{eq:we2} the value of the wedge product is zero.

Lemma~\ref{le:dual} implies
\begin{lemma}\label{le:dualdual}
The map \eqref{eq:we2} is dual to \eqref{eq:we1}.
\end{lemma}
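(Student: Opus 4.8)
The plan is to deduce Lemma~\ref{le:dualdual} directly from Lemma~\ref{le:dual}(i) via Serre duality, in exact parallel to the way part (ii) of Lemma~\ref{le:dual} was deduced from part (i). The point is that the wedge products \eqref{eq:we1} and \eqref{eq:we2} on the sheaves $\Lambda^p\cT_{\cX_s}$ are the Serre-dual incarnations of the cup products \eqref{eq:cup2} and \eqref{eq:cup1} on the twisted Hodge sheaves $\Omega^p_{\cX_s}(\cK_{\cX_s})$. Concretely, under the fiberwise pairing
$$
\cA^{0,p}(\cX_s,\Lambda^p\cT_{\cX_s}) \times \cA^{0,n-p}(\cX_s,\Omega^p_{\cX_s}(\cK_{\cX_s})) \to \C,
$$
which is (a multiple of) $\int_{\cX_s}$ of the natural contraction-and-wedge, the operator $A\we\mbox{\textvisiblespace}$ of \eqref{eq:we1} is transpose to $A\cup\mbox{\textvisiblespace}$ of \eqref{eq:cup1}, and $\ol A\we\mbox{\textvisiblespace}$ of \eqref{eq:we2} is transpose to $\ol A\cup\mbox{\textvisiblespace}$ of \eqref{eq:cup2}. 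This is already asserted in the sentence preceding \eqref{eq:we1}: ``The cup products \eqref{eq:cup1} and \eqref{eq:cup2} \ldots yield wedge products on the dual spaces.''

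First I would spell out the duality pairing explicitly in index notation: for $\varphi\in\cA^{0,p}(\cX_s,\Lambda^p\cT_{\cX_s})$ with components $\varphi^{A_p}_{\ol B_p}$ and $\psi\in\cA^{0,n-p}(\cX_s,\Omega^p_{\cX_s}(\cK_{\cX_s}))$ with components $\psi_{A_p\ol C_{n-p}}$, the pairing is $\int_{\cX_s}\varphi^{A_p}_{\ol B_p}\,\psi_{A_p\ol C_{n-p}}\,\epsilon^{\ol B_p\ol C_{n-p}}\,dV$ up to the usual combinatorial constant, where $\epsilon$ is the Levi-Civita tensor of the \ke metric (this is exactly the fiber integral $\int\varphi\we\psi$ of Corollary~\ref{co:serre} written out). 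Then I would contract $(A\we\varphi)$ against $\psi$ and integrate by parts on the skew indices — no derivatives are involved, only a relabelling of antisymmetrized indices and one application of the symmetry \eqref{eq:symmA} of $A_{\ol\beta\ol\delta}$ — to land on $\varphi$ paired against $\ol A\cup\psi$ (or, going the other way, $A\cup\psi$). The bookkeeping here is essentially the same combinatorial manipulation already carried out in the proof of Lemma~\ref{le:dual}(i); the only new feature is keeping track of the wedge positions of the $n-p$ antiholomorphic indices that are now carried by $\psi$ rather than by the test form.

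Having established that \eqref{eq:we1} and \eqref{eq:we2} are the transposes of \eqref{eq:cup2} and \eqref{eq:cup1} respectively under Serre duality, the lemma follows formally: since by Lemma~\ref{le:dual}(i) the operator $\ol A\cup\mbox{\textvisiblespace}$ of \eqref{eq:cup2} is the $L^2$-adjoint of $A\cup\mbox{\textvisiblespace}$ of \eqref{eq:cup1}, transposing this identity through the Serre pairing turns adjoints into adjoints and exchanges the two cup products for the two wedge products, giving that \eqref{eq:we2} is the adjoint (equivalently, dual) of \eqref{eq:we1}. I expect the main obstacle to be purely notational: correctly normalizing the Serre-duality pairing so that the transpose statement has no stray constants, and correctly handling the antisymmetrization when the contraction in \eqref{eq:we2} ``uses both indices of $A$'' — one must check that the contraction landing on the last holomorphic index versus the averaged antisymmetric position is accounted for, just as in Definition~\ref{de:cup}. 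Once that is pinned down, the proof is a one-line corollary of Lemma~\ref{le:dual} and Corollary~\ref{co:serre}, so I would keep the written proof short, citing both.
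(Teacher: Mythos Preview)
Your approach is correct and is one legitimate reading of the paper's one-line proof ``Lemma~\ref{le:dual} implies'', but it is more circuitous than necessary, and the final step is left a bit vague.

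The paper almost certainly intends the direct route: the $L^2$ inner product on $\cA^{0,p}(\cX_s,\Lambda^p\cT_{\cX_s})$ is written out with the metric $g_{\alpha\ol\beta}$ exactly as for $\cA^{0,n-p}(\cX_s,\Omega^p_{\cX_s}(\cK_{\cX_s}))$, and the identity $(A\we\nu,\mu)=(\nu,\ol A\we\mu)$ is the \emph{same} index manipulation as in Lemma~\ref{le:dual}(i), using the symmetry \eqref{eq:symmA}. No Serre pairing is needed; the computation is literally identical with tangent indices in place of cotangent indices. This is what ``Lemma~\ref{le:dual} implies'' means.

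Your route via Serre duality works too, but note two points. First, there is a slip: early on you say $A\we$ is Serre-transpose to $A\cup$, later you say $A\we$ is transpose to $\ol A\cup$; the first is correct (one checks $\int(A\we\varphi)\we\psi=\int\varphi\we(A\cup\psi)$ directly). Second, and more substantively, your step ``transposing this identity through the Serre pairing turns adjoints into adjoints'' is not automatic: Serre transpose is with respect to a bilinear pairing, $L^2$-adjoint with respect to a sesquilinear one. To pass between them you need that the anti-linear Hodge-type isomorphism $\bar*:\cA^{0,p}(\Lambda^p\cT_{\cX_s})\to\cA^{0,n-p}(\Omega^p_{\cX_s}(\cK_{\cX_s}))$ is an $L^2$-isometry and intertwines $\ol A\we$ with $A\cup$ (conjugation swaps $A\leftrightarrow\ol A$, Hodge star swaps $\we\leftrightarrow\cup$). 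This is true and easy, but it is an extra verification that the direct computation avoids entirely. If you keep the Serre-duality proof, state this intertwining explicitly; otherwise just repeat the one-line index argument of Lemma~\ref{le:dual}(i).
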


We apply our result to the curvature for twisted Hodge bundles, again assuming that the dimension of the cohomology groups (for all degrees $p$) is constant. Observe that the role of conjugate and non-conjugate \ks forms is interchanged.
\begin{theorem}[\cite{sch-preprint10}]\label{th:curvgendual}
Let $X=\cX_s$. Let $A\in \cA^{(0,1)}(X, \cT_X)$ be a harmonic \ks form representing a tangent vector of $S$ at $s$, and let $\nu \in \cA^{(0,p)}(X,\Lambda^p\cT_X)$ be a harmonic form.

Then the curvature of $R^pf_*\Lambda^p\cT_{\cX/S}$ at $s$ equals
\begin{eqnarray}\label{eq:curvgendual}
R(A,\ol A, \nu, \ol \nu )&=&- \int_{X}
\left(\left( \Box + 1 \right)^{-1}(A\cdot \ol A)\right)
\cdot(\nu \cdot \ol\nu)\; g\, dV\nonumber\\
&& \quad - \int_{X} \left(\left( \Box + 1 \right)^{-1} (A\wedge\ol \nu)\right)
\cdot (\ol A \wedge \nu)\; g\, dV \\
&& \quad -  \int_{X} \left(\left( \Box - 1 \right)^{-1}
(A\wedge \nu)\right)\cdot (\ol A\wedge\ol \nu)\; g\, dV.
\nonumber
\end{eqnarray}
The only possible positive contribution arises from
$$
\int_{X}H(A\wedge \nu) H(\ol A \wedge \ol\nu)\, g\, dV.
$$
\end{theorem}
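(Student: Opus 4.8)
The plan is to derive the formula~\eqref{eq:curvgendual} as the dual (Serre-dual) counterpart of the curvature formula~\eqref{eq:curvgen} of Theorem~\ref{th:curvgen}, applied with the parameter $p$ replaced throughout by $n-p$. Indeed, by relative Serre duality (Corollary~\ref{co:serre}) the sheaf $R^pf_*\Lambda^p\cT_{\cX/S}$ is the dual bundle of $R^{n-p}f_*\Omega^p_{\cX/S}(\cK_{\cX/S})$: fiberwise, $H^p(X,\Lambda^p\cT_X)$ is dual to $H^{n-p}(X,\Omega^p_X(\cK_X))$, and the $L^2$-inner products induced by the \ke metric are the dual Hermitian metrics. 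Hence the curvature of $R^pf_*\Lambda^p\cT_{\cX/S}$ is minus the transpose of the curvature of $R^{n-p}f_*\Omega^p_{\cX/S}(\cK_{\cX/S})$; this accounts for the overall sign change and explains why the roles of $A$ and $\ol A$ become interchanged. First I would fix the identification of $\nu\in\cA^{(0,p)}(X,\Lambda^p\cT_X)$ with the Serre-dual class $\psi\in H^{n-p}(X,\Omega^p_X(\cK_X))$ via the pairing $\int_X\psi\wedge(\,\bigcdot\,)$, and check that the wedge maps \eqref{eq:we1}, \eqref{eq:we2} correspond under this identification to the cup maps \eqref{eq:cup1}, \eqref{eq:cup2} — with $A\wedge$ on $\Lambda^p\cT_X$ dual to $\ol A\cup$ on $\Omega^{\bigcdot}_X(\cK_X)$, and $\ol A\wedge$ dual to $A\cup$. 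This is the content of Lemma~\ref{le:dualdual} together with Lemma~\ref{le:dual}.

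Next I would transcribe~\eqref{eq:curvgen}. Writing $R^\vee$ for the curvature of the dual bundle, one has $R^\vee(A,\ol A,\nu,\ol\nu)=-R(A,\ol A,\psi,\ol\psi)$ where $\psi$ is the harmonic representative Serre-dual to $\nu$, and where in the formula for $R$ the symbol $p$ is read as $n-p$. The scalar term $(\Box+1)^{-1}(A\cdot\ol A)\cdot(\psi\cdot\ol\psi)$ becomes $(\Box+1)^{-1}(A\cdot\ol A)\cdot(\nu\cdot\ol\nu)$ up to the sign, since the pointwise norm $\psi\cdot\ol\psi$ is the same scalar as $\nu\cdot\ol\nu$ under the dual metric. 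In the second term of~\eqref{eq:curvgen}, $A\cup\psi$ lives in $H^{n-p+1}(X,\Omega^{p-1}_X(\cK_X))$, whose Serre dual is $H^{p-1}(X,\Lambda^{p-1}\cT_X)$; under the dictionary above $A\cup\psi$ corresponds to $\ol A\wedge\nu$, so the term $\int(\Box+1)^{-1}(A\cup\psi)\cdot(\ol A\cup\ol\psi)$ turns into $\int(\Box+1)^{-1}(A\wedge\ol\nu)\cdot(\ol A\wedge\nu)$ — matching the middle term of~\eqref{eq:curvgendual}, again with the global minus sign. Similarly the third term of~\eqref{eq:curvgen}, involving $A\cup\ol\psi$ and the resolvent $(\Box-1)^{-1}$, maps to $\int(\Box-1)^{-1}(A\wedge\nu)\cdot(\ol A\wedge\ol\nu)$, reproducing the last term of~\eqref{eq:curvgendual}. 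The statement about the sole possibly positive contribution then follows at once from Theorem~\ref{th:curvgen}(ii): the only possibly negative term there, $-\|H(\ol A\cup\psi)\|^2=-\int_X H(A\cup\ol\psi)\ol{H(A\cup\ol\psi)}\,g\,dV$, becomes after the sign flip and the dictionary the only possibly positive term $+\int_X H(A\wedge\nu)\,H(\ol A\wedge\ol\nu)\,g\,dV$.

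The main obstacle, and the step deserving genuine care, is verifying that the duality is \emph{compatible with all the analytic ingredients simultaneously} — not just the Hermitian pairings on cohomology, but the Laplacians $\Box$ and their resolvents $(\Box\pm 1)^{-1}$, and the harmonic projections $H$. One must check that the fiberwise Hodge Laplacian on $\cA^{(0,q)}(X,\Lambda^{p}\cT_X)$ is intertwined, via the conjugate-linear Serre-duality isomorphism, with the Laplacian on $\cA^{(0,n-q)}(X,\Omega^{p}_X(\cK_X))$ — this uses that the metric is \ke (so $\Box_{\ol\pt}$ commutes with the relevant bundle isomorphisms) — and hence that $(\Box+1)^{-1}$ and $(\Box-1)^{-1}$, together with $H$, are carried to their counterparts. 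Once this compatibility is in place the two curvature formulas are term-by-term transposes of each other and the theorem is immediate; this is precisely the argument indicated in~\cite{sch-preprint10}, and I would present it in that spirit, citing Corollary~\ref{co:serre} and Lemma~\ref{le:dualdual} for the formal structure and invoking the \ke condition for the analytic intertwining.
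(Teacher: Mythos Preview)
Your proposal is correct and is exactly the argument the paper has in mind: the text introduces the wedge products \eqref{eq:we1}, \eqref{eq:we2}, records in Lemma~\ref{le:dualdual} that they are dual to the cup products of Definition~\ref{de:cup}, and then states Theorem~\ref{th:curvgendual} with the remark ``We apply our result to the curvature for twisted Hodge bundles \ldots\ Observe that the role of conjugate and non-conjugate \ks forms is interchanged'' --- i.e.\ deduce \eqref{eq:curvgendual} from \eqref{eq:curvgen} by Serre duality, with the sign flip and the $A\leftrightarrow\ol A$ swap you describe. One small wording issue: there is no need to ``replace $p$ by $n-p$'' --- the Serre dual of $R^pf_*\Lambda^p\cT_{\cX/S}$ is $R^{n-p}f_*\Omega^p_{\cX/S}(\cK_{\cX/S})$ with the \emph{same} $p$, so Theorem~\ref{th:curvgen} is applied as stated; what changes is only the cohomological degree of the representing form ($\nu$ has degree $p$, $\psi$ has degree $n-p$).
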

\begin{corollary}
The curvature can be estimated in terms of norms of harmonic projections.
\begin{equation}\label{eq:mainest}
R(A,\ol A, \nu,\ol \nu)\leq-  \|H (A\wedge\ol\nu)\|^2  + \|H(A\wedge \nu)\|^2.
\end{equation}
(Here $A\wedge \nu \in \cA^{(0,p+1)}(\cX_s,\Lambda^{p+1}\cT_{\cX_s})$ and $\ol A \we \nu \in \cA^{(0,p-1)}(\cX_s,\Lambda^{p-1}\cT_{\cX_s})$.)
\end{corollary}

\subsection*{Curvature for holomorphic families over curves}
When considering holomorphic families over (smooth analytic pieces of) curves $C$, we are dealing with holomorphic maps $ a :C\to S$, where $S$ is the base space of a universal deformation. We are looking at families  of the form $f_C: {\cX}_C \to C$, where $\cX_C = \cX\times_S C$. We suppose that $a$ has values in the locally free locus of $R^pf_*\Lambda^p\cT_{\cX/S}$.

Now the \ks map of order $p$ for such a curve is
$$
\rho^{(p)}: \cT^{\otimes p}_C \to R^pf_{C*}\Lambda^p\cT_{\cX_C/C} = a^*\!\!\left( R^pf_* \Lambda^p\cT_{\cX/S}\right).
$$
The \ks form of order $p$ induces a (semi-) norm $h_p$ on $\cT^{\otimes p}_C$.
\begin{lemma}[{\cite{sch-preprint10}}]\label{le:RAp}
  Let $s$ be a holomorphic coordinate on $C$. Then, identifying $\pt/\pt s$ with a harmonic form $A$ we have
  $$
  -\frac{\pt^2 \log \|A^p\|^2}{\pt s \ol{\pt s}}=-\frac{\pt^2 \log h_p}{\pt s \ol{\pt s}}\leq R(A, \ol A, A^p, \ol{A^p})/\|A^p\|^2,
  $$
  if $A^p\neq 0$.
\end{lemma}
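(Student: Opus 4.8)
The plan is to recognise $h_p$, in the holomorphic frame $(\pt/\pt s)^{\otimes p}$ of $\cT_C^{\otimes p}$, as the Hermitian metric that a holomorphic section of a Hermitian holomorphic vector bundle induces on the line bundle it spans, and then to invoke the classical pointwise curvature inequality for such sections (equivalently, the Gauss--Codazzi curvature-decreasing property for holomorphic subbundles).

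\emph{Set-up.} Since $a$ takes values in the locally free locus, $E:=a^*(R^pf_*\Lambda^p\cT_{\cX/S})=R^pf_{C*}\Lambda^p\cT_{\cX_C/C}$ is a holomorphic vector bundle over the smooth part of $C$, carrying the fiberwise $L^2$-metric induced by the \ke metrics on the fibers of $f$. The higher order \ks map $\rho^{(p)}:\cT_C^{\otimes p}\to E$ of Definition~\ref{de:genks} is $\cO_C$-linear, being assembled from the ($\cO$-linear) \ks map and the natural morphism $S^pH^1(\cX_s,\cT_{\cX_s})\to H^p(\cX_s,\Lambda^p\cT_{\cX_s})$; hence the image $\sigma:=\rho^{(p)}\bigl((\pt/\pt s)^{\otimes p}\bigr)$ is a holomorphic section of $E$. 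By the direct image version of Proposition~\ref{pr:harmsec} this section is represented fiberwise by the harmonic form $A^p=H(A\we\ldots\we A)$, so that, by the very definition of the induced norm $h_p$, one has $h_p=\|\sigma\|^2=\|A^p\|^2$ in the frame $(\pt/\pt s)^{\otimes p}$. In particular
$$
-\frac{\pt^2\log h_p}{\pt s\,\ol{\pt s}}=-\frac{\pt^2\log\|A^p\|^2}{\pt s\,\ol{\pt s}},
$$
which is the first equality of the lemma; moreover, wherever $A^p\neq 0$, the left-hand side is the Chern curvature of the line subbundle $L\subset E$ generated by $\sigma$.

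\emph{The estimate.} For a holomorphic section $\sigma$ of a Hermitian holomorphic vector bundle $(E,\langle\cdot,\cdot\rangle)$ with Chern connection $D=D'+D''$ one has, at each point where $\sigma\neq 0$,
$$
-\frac{\pt^2\log\|\sigma\|^2}{\pt s\,\ol{\pt s}}
=\frac{\langle\Theta_E(\pt_s,\pt_{\ol s})\sigma,\sigma\rangle}{\|\sigma\|^2}
-\frac{\|\sigma\|^2\,\|D'_s\sigma\|^2-|\langle D'_s\sigma,\sigma\rangle|^2}{\|\sigma\|^4}
\;\leq\;\frac{\langle\Theta_E(\pt_s,\pt_{\ol s})\sigma,\sigma\rangle}{\|\sigma\|^2},
$$
where $D'_s$ is the covariant $(1,0)$-derivative in the direction $\pt/\pt s$ and the subtracted quantity is non-negative by the Cauchy--Schwarz inequality; equivalently, this is the second fundamental form relation $\Theta_L=\Theta_E|_L-\beta^*\!\we\beta$ for the line subbundle $L\subset E$ spanned by $\sigma$. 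In the curvature normalisation used throughout the paper, the right-hand side is exactly $R(A,\ol A,A^p,\ol{A^p})/\|A^p\|^2$, namely the curvature of $R^pf_*\Lambda^p\cT_{\cX/S}$ evaluated on the tangent vector of $S$ represented by the harmonic \ks form $A$ (the image of $\pt/\pt s$) and on the bundle element $A^p$; the pull-back along $a$ changes nothing once $\pt/\pt s$ has been mapped to $A$. Combining the displayed inequality with the first equality yields the lemma.

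\emph{Main point of caution.} The only step asking for genuine care is the sign bookkeeping: one must track the factor $\ii$ and the signs so that the Gauss--Codazzi (equivalently the Cauchy--Schwarz) defect enters with the correct non-positive sign, and so that the right-hand side above matches the sign convention for $R$ fixed in Theorem~\ref{th:curvgendual}; everything else is formal. A self-contained alternative that avoids subbundle language is to fix a holomorphic frame of $E$ normal at the point in question, expand $\sigma$ in it, and compute $\pt\ol\pt\log\|\sigma\|^2$ there by hand; the Cauchy--Schwarz inequality then produces the same estimate, and nothing beyond Proposition~\ref{pr:harmsec} and the definition of the higher order \ks maps is needed.
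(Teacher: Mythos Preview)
Your proof is correct and follows the same approach as the paper: the paper's entire proof reads ``We set $\nu=A^p$, apply \eqref{eq:curvgendual}, and consider the second fundamental form,'' which is precisely the Gauss--Codazzi/curvature-decreasing argument you have spelled out in detail. Your elaboration --- identifying $\sigma=\rho^{(p)}((\pt/\pt s)^{\otimes p})$ as a holomorphic section of the pulled-back direct image bundle and applying the standard inequality for the curvature of the line subbundle it spans --- is exactly what the paper's one-line proof leaves to the reader.
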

\begin{proof}
  We set $\nu=A^p$, apply \eqref{eq:curvgendual}, and consider the second fundamental form.
\end{proof}

\begin{lemma}[{\cite[Lemma 13 (ii)]{to-yeung}}]
Let $\mu \in \cA^{(0,p-1)}(\Lambda^{p-1}\cT_{\cX_s})$ be harmonic, $A$ and $\nu$ as above. Then
\begin{equation}\label{eq:CauSch}
\|H(\ol A\we \nu)\|^2\geq |(H(A\we \mu),\nu)|^2/ \|\mu\|^2.
\end{equation}

\end{lemma}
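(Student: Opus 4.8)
The plan is to prove the inequality \eqref{eq:CauSch} by a single application of the Cauchy–Schwarz inequality on cohomology, using the adjointness statement of Lemma~\ref{le:dualdual} to move the operator from one side to the other. First I would observe that since $\mu \in \cA^{(0,p-1)}(X,\Lambda^{p-1}\cT_X)$ is harmonic, the wedge product $A\we\mu$ lies in $\cA^{(0,p)}(X,\Lambda^p\cT_X)$, and its harmonic projection $H(A\we\mu)$ lies in the same cohomology space $H^p(X,\Lambda^p\cT_X)$ as $\nu$. Thus the $L^2$-pairing $(H(A\we\mu),\nu)$ on harmonic representatives is meaningful, and its absolute value is what appears on the right of \eqref{eq:CauSch}.

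The key step is to rewrite $(H(A\we\mu),\nu)$ by adjunction. Since the $L^2$-inner product on cohomology is taken on harmonic representatives, we have $(H(A\we\mu),\nu) = (A\we\mu,\nu)$; by Lemma~\ref{le:dualdual}, the wedge product $\ol A\we\mbox{\textvisiblespace}$ of \eqref{eq:we2} is the adjoint of $A\we\mbox{\textvisiblespace}$ of \eqref{eq:we1}, so this equals $(\mu, \ol A\we\nu)$, and again passing to harmonic representatives on the second slot gives $(\mu, H(\ol A\we\nu))$. Now I would apply the ordinary Cauchy–Schwarz inequality in the $L^2$-inner product on $\cA^{(0,p-1)}(X,\Lambda^{p-1}\cT_X)$:
$$
|(H(A\we\mu),\nu)|^2 = |(\mu,H(\ol A\we\nu))|^2 \le \|\mu\|^2\,\|H(\ol A\we\nu)\|^2,
$$
and dividing through by $\|\mu\|^2$ (which is legitimate once $\mu\neq 0$; if $\mu=0$ the inequality is trivial) yields exactly \eqref{eq:CauSch}.

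I do not expect any serious obstacle here: the statement is essentially the Cauchy–Schwarz inequality packaged through the duality of the two wedge operations, and the only points that need a word of care are (a) that $\mu$ and $\nu$ sit in adjacent cohomology degrees so that the adjoint operators of Lemma~\ref{le:dualdual} genuinely relate the two pairings, and (b) that harmonic projection is self-adjoint and acts as the identity on harmonic forms, so that inserting or deleting $H$ inside an inner product with a harmonic form does not change its value. The one place to be slightly careful is the degenerate case $p=n$, where \eqref{eq:we1} is zero by the convention recorded after Lemma~\ref{le:dualdual}; then $H(A\we\mu)=0$ and the left side of \eqref{eq:CauSch} vanishes, so the inequality holds trivially.
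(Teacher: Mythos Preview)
Your proposal is correct and follows exactly the approach the paper takes: the paper's proof is a single sentence stating that the claim follows immediately from Lemma~\ref{le:dualdual} and the Cauchy--Schwarz inequality, and your argument spells out precisely those two steps. Your only misstep is the unnecessary edge-case remark at the end: the convention after Lemma~\ref{le:dualdual} concerns $A\we$ applied to forms of top degree~$n$, whereas here $A\we$ acts on $\mu$ in degree $p-1$, so for $p=n$ the product $A\we\mu$ lands in degree $n$ and need not vanish; in any event the general argument already covers all $p$ and no separate case is required.
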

\medskip

\begin{proof}
The claim follows immediately from Lemma~\ref{le:dualdual} and the Cauchy-Schwarz inequality.
\end{proof}

\begin{definition}
  A (semi-)norm $G_p=\|\mbox{\textvisiblespace}\|_p$ on $\cT_C$ is defined by
  $$
  \|A\|_p =\|A^p\|^{1/p}
  $$
  for any $p$.
\end{definition}
These norms are only continuous, where the $A^p$ vanish, and differentiable elsewhere. By Lemma~\ref{le:RAp}, the curvatures of the induced Finsler pseudo-metrics for a tangent vector $\pt/\pt s$ corresponding to the tensor $A$ satisfy
$$
K_p= - \frac{\pt^2 \log(\|A\|^2_p)}{\pt s \ol{\pt s}}/  \|A\|^2_p\leq  R(A,\ol A, A^p, \ol{A^p})/p\|A\|^{2+2p}_p .
$$
The estimate \cite[(60) Lemma 7]{sch-inv} can be improved using the Cauchy-Schwarz inequality from above setting $\nu=A^p$ and $\mu = A^{p-1}$ for $p>1$.

For $p>1$, our Theorem~\ref{th:curvgendual} implies
$$
R(A,\ol A, A^p, \ol{A^p}) \leq -\frac{\|A^p\|^4}{\|A^{p-1}\|^2} + \|A^{p+1} \|^2,
$$
whereas for $p=1$ the estimate already follows from  \cite{sch:curv}:
$$
R(A,\ol A, A, \ol A) \leq -\|H(A\we\ol A)\|^2 + \|H(A\we A)\|^2 ,
$$
and the constant $H(A\we \ol A)$ equals
$$
H(A\we \ol A)=\int_{\cX_s} (A \we \ol A) \;g\, dV \Big/ \mathrm{vol}(\cX_s)
$$
where the wedge product $A\we \ol A$ is given by \eqref{eq:we2}.
$$
R(A,\ol A,A,\ol A)/\|A\|^2\leq -c \cdot\|A\|^2 + \frac{\|A^2\|^2}{\|A\|^2}.
$$
Using the extra term in Theorem~\ref{th:curvgendual} we can take $c=-2/{\mathrm vol}(X)$. (Note that the volume of the fibers is constant an a polarized family.)
Hence
\begin{equation}
K_1\leq -c + \frac{\|A^2\|^2}{\|A\|^4}= -c +\frac{\|A\|^4_2}{\|A\|^4}.
\end{equation}
We follow our arguments from \cite{sch-inv} and obtain the estimate
  \begin{equation}
    K_p\leq- \frac{1}{p}\left(-\left(\frac{\|A\|^2_p}{\| A\|^2_{p-1}}\right)^{p-1} +\left( \frac{\|A\|^2_{p+1}}{\| A\|^2_{p}}\right)^{p+1}  \right)
  \end{equation}
which is valid whenever $A^p\neq 0$ and $A^{p-1}\neq 0$.
\begin{lemma}
Given a curve $C \to S$, assume that $G_j\not \equiv 0$ for $j=1,\ldots, q$, and $G_{q+1}\equiv 0$. Then on a complement of a discrete set of $C$
\begin{equation}\label{eq:Kpest}
K_1\leq -c + \frac{G^2_2}{G^2_1} \quad \text{ and } \quad   K_p\leq
\frac{1}{p}\left(- \frac{G^{p-1}_p}{G^{p-1}_{p-1}} + \frac{G^{p+1}_{p+1}}{G^{p+1}_{p}}\right) \text{ for } 1<p<q+1.
\end{equation}
\end{lemma}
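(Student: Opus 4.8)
The plan is to read the Lemma off the pointwise curvature estimates already assembled in this subsection; the one substantive point is to identify the discrete exceptional set off which they are valid. First I would record that, along the curve, after choosing a local holomorphic coordinate $s$ and identifying $\pt/\pt s$ with the harmonic \ks form $A$, the function $G_j$ coincides, up to a fixed positive exponent, with the squared $L^2$-norm of the value at $s$ of the local section $\sigma_j:=\rho^{(j)}\bigl((\pt/\pt s)^{\otimes j}\bigr)$ of $a^*\bigl(R^jf_*\Lambda^j\cT_{\cX/S}\bigr)$; indeed $\sigma_j(s)$ is the cohomology class of $A^j=H(A\we\cdots\we A)$. Since $\rho^{(j)}$ is obtained from the edge homomorphism of \eqref{eq:tang} together with the natural cup-product morphisms $S^j(R^1f_*\cT_{\cX/S})\to R^jf_*\Lambda^j\cT_{\cX/S}$, it is a morphism of coherent analytic sheaves; over the locally free locus of $R^jf_*\Lambda^j\cT_{\cX/S}$ — into which $a$ is assumed to map — it sends the local holomorphic generator $(\pt/\pt s)^{\otimes j}$ of $\cT_C^{\otimes j}$ to a holomorphic local section, and, the $L^2$-metric being a genuine hermitian metric, $G_j(s)=0$ exactly where $\sigma_j(s)=0$.

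Now I invoke the hypothesis $G_j\not\equiv 0$ for $1\le j\le q$: each $\sigma_j$ is then a holomorphic section, not identically zero, of a vector bundle over the smooth connected piece of $C$; picking a component that is not identically zero in a local trivialization shows its zero set is discrete, so $Z:=\bigcup_{j=1}^{q}\{s\in C:\,G_j(s)=0\}$ is discrete in $C$. On $C\setminus Z$ the norms $\|\cdot\|_j$, $1\le j\le q$, are strictly positive and differentiable; hence $A=A^1\neq 0$ there, and for $1<p<q+1$ both $A^p\neq 0$ (as $p\le q$) and $A^{p-1}\neq 0$ (as $1\le p-1\le q$). These are precisely the conditions under which the pointwise estimates of the preceding discussion hold, namely $K_1\le -c+\|A\|_2^4/\|A\|_1^4$ with a positive constant $c$ depending only on the fiber volume (from Theorem~\ref{th:curvgendual}) and, for $1<p<q+1$, $K_p\le \tfrac{1}{p}\bigl(-(\|A\|_p^2/\|A\|_{p-1}^2)^{\,p-1}+(\|A\|_{p+1}^2/\|A\|_p^2)^{\,p+1}\bigr)$, coming from Theorem~\ref{th:curvgendual}, Lemma~\ref{le:RAp} and the Cauchy-Schwarz inequality \eqref{eq:CauSch}. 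Rewriting these through $G_j$ yields \eqref{eq:Kpest}; for $p=q$ the second term drops out because $G_{q+1}\equiv 0$, in agreement with the stated formula.

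The only step I expect to require genuine care is the discreteness of $Z$, i.e.\ the passage from $G_j\not\equiv 0$ to the discreteness of $\{G_j=0\}$: it rests on the holomorphic dependence of the classes $A^j$ on the parameter, hence on $\rho^{(j)}$ being a morphism of coherent analytic sheaves, together with the standing hypothesis that the curve $a$ avoids the non-locally-free loci, so that the $L^2$-metrics and base change are at our disposal. Everything else is bookkeeping with the inequalities established just above.
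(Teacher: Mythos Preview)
Your proposal is correct and follows exactly the route the paper intends: the lemma is a restatement of the pointwise inequalities derived immediately before it, and the only thing to supply is the discreteness of the set where some $G_j$ ($1\le j\le q$) vanishes, which you obtain from the holomorphicity of $\rho^{(j)}\bigl((\partial/\partial s)^{\otimes j}\bigr)$ as a section of the pulled-back locally free sheaf. The paper gives no separate proof of this lemma, treating it as an immediate consequence of the preceding discussion; your write-up simply makes the implicit discreteness step explicit.
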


\section{Finsler metrics}
We will use Demailly's version of Ahlfors Lemma to prove Kobayashi hyperbolicity of a complex space. In this way we can treat the locus, where the direct image sheaves $R^pf_*\Lambda^p_{\cX/S}\cT_{\cX/S}$ are {\em not locally free}.

\begin{theorem}[{Demailly \cite[3.2]{de:hyp}}]\label{th:dem}
Let $\gamma(t) = \ii \gamma_0(t) dt\we \ol{dt}$ be a hermitian metric on the unit disk $\Delta=\{|t|<0\}$, where $\log \gamma_0$ is a subharmonic function such that $\ii \pt \db \log \gamma_0 \geq A \gamma(t)$ in the sense of currents for some positive constant $A$. Then $\gamma$ can be compared with the Poincaré metric on $\Delta$ as follows:
$$
\gamma(t) \leq \frac{2}{A\cdot(1-|t|^2)^2}\ii dt\we \ol{dt}.
$$
\end{theorem}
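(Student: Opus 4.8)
The plan is to reduce the statement to the classical Schwarz-Pick/Ahlfors estimate by comparing $\gamma$ with a one-parameter family of Poincaré metrics on slightly smaller disks and letting the radius tend to $1$. First I would fix $r<1$ and work on $\Delta_r=\{|t|<r\}$, where the Poincaré metric of constant curvature $-A$ is
$$
\sigma_r(t)=\frac{2r^2}{A\,(r^2-|t|^2)^2}\,\ii\,dt\we\ol{dt},
$$
which satisfies $\ii\pt\db\log(\sigma_r)_0=A\,\sigma_r$ exactly and blows up on $\partial\Delta_r$. The goal is to show $\gamma\le\sigma_r$ on $\Delta_r$; since $r<1$ was arbitrary, letting $r\to1$ yields the claimed bound with $\tfrac{2}{A(1-|t|^2)^2}$ on all of $\Delta$.

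To prove $\gamma\le\sigma_r$ on $\Delta_r$, consider the function $u=\log(\gamma_0/(\sigma_r)_0)$ on $\Delta_r$. Because $\gamma_0$ is bounded near $\partial\Delta_r$ while $(\sigma_r)_0\to+\infty$ there, $u$ tends to $-\infty$ at the boundary, so $u$ attains its supremum at some interior point $t_0\in\Delta_r$ (if $\sup u=-\infty$ there is nothing to prove). The hypothesis gives $\ii\pt\db\log\gamma_0\ge A\gamma$ in the sense of currents, hence
$$
\ii\pt\db\, u=\ii\pt\db\log\gamma_0-\ii\pt\db\log(\sigma_r)_0\ \ge\ A\gamma-A\sigma_r=A\bigl(e^{u}-1\bigr)\,\sigma_r
$$
as a current. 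At an interior maximum of the (upper semicontinuous, subharmonic-plus-smooth) function $u$ one has $\ii\pt\db u\le 0$ in the appropriate distributional sense — this is the step where one must be a little careful, since $\log\gamma_0$ is only assumed subharmonic; the right tool is the maximum principle for the inequality $\Delta u\ge c(e^u-1)$ read distributionally, or equivalently an approximation of $\log\gamma_0$ by smooth subharmonic functions decreasing to it. From $\ii\pt\db u\le 0$ and $\sigma_r>0$ we get $e^{u(t_0)}-1\le 0$, i.e. $u(t_0)\le 0$, so $u\le 0$ on all of $\Delta_r$, which is exactly $\gamma\le\sigma_r$.

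The main obstacle is precisely the regularity issue at the maximum point: one cannot directly invoke the smooth maximum principle because $\log\gamma_0$ is merely subharmonic. I would handle this either by the distributional maximum principle (if $\ii\pt\db u\ge h$ with $h$ a positive measure near $t_0$ and $u$ has an interior max, then $h=0$ near $t_0$, forcing $e^{u}-1\le0$ there and then propagating), or by regularizing: write $\log\gamma_0=\lim_{\varepsilon\to0}\downarrow\phi_\varepsilon$ with $\phi_\varepsilon$ smooth subharmonic, run the argument for each $\phi_\varepsilon$ with an $\varepsilon$-perturbed target metric, and pass to the limit. Once this point is settled, the rest is the routine boundary-blowup comparison and the harmless limit $r\to1$.
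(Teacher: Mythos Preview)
The paper does not supply its own proof of this statement: it is quoted verbatim as Demailly's result \cite[3.2]{de:hyp} and used as a black box. So there is nothing in the paper to compare your argument against.

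That said, your outline is the classical Ahlfors comparison argument and is correct in substance. A couple of comments. First, the boundedness of $\gamma_0$ on $\ol{\Delta_r}$ that you invoke is automatic because $\log\gamma_0$ is subharmonic on all of $\Delta$, hence locally bounded above; you should state this explicitly rather than treat it as obvious. Second, you are right to flag the maximum-principle step as the only delicate point. The cleanest way to handle it is not the distributional route but the regularization you sketch: replace $\log\gamma_0$ by decreasing smooth approximants $\phi_\varepsilon$ obtained by convolution (these remain subharmonic and satisfy the same curvature inequality with $\gamma_0$ replaced by $e^{\phi_\varepsilon}\ge\gamma_0$ after a small loss, or more directly one compares $e^{\phi_\varepsilon}$ against $(1+\delta)\sigma_r$ and lets $\delta\to 0$ after $\varepsilon\to 0$). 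This is exactly how Demailly proceeds, and once the smooth case is in hand the passage to the limit is routine by monotonicity. Your plan is sound.
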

The {\em curvature} of $\gamma$ (at $0$) is the infimum of all numbers $-A$, where $A$ satisfies the assumption of the Theorem.

Different notions of a Finsler metric are common. We do not assume the triangle inequality/convexity. Such metrics are also called {\em pseudo-metrics} (cf.\ \cite{kobook}).
\begin{definition}\label{de:fins}
Let $Z$ be a reduced complex space and let $T_cZ$ be the fiber bundle
consisting of the tangent cones of 1-jets. An upper semi-continuous
function
$$
F:T_cZ \to \R_{\geq 0}
$$
is called Finsler pseudo-metric (or pseudo-length function), if
$$
F(av)=|a|F(v) \text{ for all } a\in \C, v\in TZ.
$$
We require that the restriction to any (local) analytic, complex curve yields a possibly singular hermitian metric $\gamma(t) = \ii \gamma_0(t) dt\we \ol{dt}$
such that $\log \gamma_0(t)$ is subharmonic and not identically equal to $-\infty$.
\end{definition}

The triangle inequality on the fibers is not required for the definition
of the holomorphic (or holomorphic sectional) curvature in the direction of a given tangent vector $v$ at a point $p$: The holomorphic
curvature of a Finsler metric at a certain point $p$ in the direction of a
tangent vector $v$ is the supremum of the curvatures of the pull-back of
the given Finsler metric to a holomorphic disk through $p$ and tangent to
$v$ (cf.\ \cite{abate-patrizio}). (For a hermitian metric, the holomorphic
curvature is known to be equal to the holomorphic sectional curvature.)

Now Theorem~\ref{th:dem} together with the above set-up imply the hyperbolicity of spaces with negatively curved Finsler metrics:
\begin{proposition}\label{pr:ahlhyp}
Let $Z$ be a reduced complex space equipped with a Finsler metric $F$. Assume that the holomorphic sectional curvature of $F$ is bounded from above by a negative constant. Then $Z$ is hyperbolic in the sense of Kobayashi.
\end{proposition}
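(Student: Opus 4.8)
The plan is to deduce Kobayashi hyperbolicity of $Z$ from the distance-decreasing property of the Kobayashi pseudo-distance together with the Ahlfors-Schwarz type estimate of Theorem~\ref{th:dem}. First I would recall that, by definition, $Z$ is Kobayashi hyperbolic if and only if the Kobayashi pseudo-distance $d_Z$ is a genuine distance, i.e.\ $d_Z(p,q)>0$ whenever $p\neq q$. Since $Z$ is a reduced complex space, it suffices to produce, near every point, a positive lower bound for $d_Z$ in terms of an auxiliary honest distance; by a standard localization argument (the Kobayashi pseudo-distance is non-increasing under inclusion of open subsets only in the wrong direction, so one uses instead that $d_Z \geq d_U$ fails, but $d_Z$ restricted to a small neighborhood is bounded below by working with the length function) it is enough to estimate the length of holomorphic disks.

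The key step is the following. Let $\varphi:\Delta \to Z$ be an arbitrary holomorphic map from the unit disk. I would pull back the Finsler pseudo-metric $F$ to obtain $\varphi^*F$, which by Definition~\ref{de:fins} is a (possibly singular) hermitian pseudo-metric $\gamma(t)=\ii\gamma_0(t)\,dt\we\ol{dt}$ on $\Delta$ with $\log\gamma_0$ subharmonic and $\not\equiv-\infty$. The curvature hypothesis — holomorphic sectional curvature of $F$ bounded above by a negative constant, say $-A_0<0$ — translates, via the definition of holomorphic curvature as a supremum over disks, into the pointwise inequality $\ii\pt\db\log\gamma_0 \geq A_0\,\gamma$ in the sense of currents on $\Delta$ (away from the zero locus of $\gamma_0$ one has this classically, and across the zero set it persists as a current inequality because $\log\gamma_0$ is subharmonic there, contributing a non-negative measure). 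Now Theorem~\ref{th:dem} applies verbatim and yields
$$
\varphi^*F \;=\; \gamma(t) \;\leq\; \frac{2}{A_0\,(1-|t|^2)^2}\,\ii\,dt\we\ol{dt},
$$
that is, $\varphi^*F$ is bounded above by a fixed constant multiple of the Poincaré metric on $\Delta$, uniformly in $\varphi$.

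The final step is to integrate this infinitesimal estimate. For any two points $p,q\in Z$ and any chain of holomorphic disks joining them, the above bound shows that the $F$-length of the images is at most $\sqrt{2/A_0}$ times the sum of the Poincaré lengths of the defining paths in the disks; taking the infimum over all such Kobayashi chains gives
$$
d^F_Z(p,q) \;\leq\; \sqrt{\tfrac{2}{A_0}}\;\, d_Z(p,q),
$$
where $d^F_Z$ denotes the integrated (Finsler) distance associated to $F$. Since $F$ is a genuine pseudo-metric that is locally bounded below by a positive constant times any hermitian metric on local charts — here one uses upper semi-continuity and positivity of $F$ on the compact set of unit tangent vectors over a relatively compact chart, together with the fact that along curves $\log\gamma_0\not\equiv-\infty$ so $F$ does not degenerate identically — the distance $d^F_Z$ separates points locally, hence so does $d_Z$, and $Z$ is Kobayashi hyperbolic. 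The main obstacle I anticipate is the passage from the curvature hypothesis on $F$ (phrased as a supremum of disk curvatures) to the current inequality $\ii\pt\db\log\gamma_0\geq A_0\gamma$ on all of $\Delta$ including the degeneracy locus of $\gamma_0$ and the singular locus of $Z$: one must check that pulling back under $\varphi$ does not lose the curvature bound and that subharmonicity controls the behavior across the bad set, which is exactly the point where Demailly's formulation (subharmonicity rather than smoothness) is essential.
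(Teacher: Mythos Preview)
Your approach is the same as the paper's: the paper simply asserts that Theorem~\ref{th:dem} together with the preceding set-up implies the proposition, and you have correctly unpacked this one-line argument into the standard Ahlfors--Schwarz comparison (pull back $F$ along an arbitrary disk, apply Demailly's lemma to dominate $\varphi^*F$ by the Poincar\'e metric uniformly in $\varphi$, and integrate to bound the Kobayashi pseudo-distance below by the integrated $F$-distance).

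One technical point deserves care. In your last paragraph you argue that $d^F_Z$ separates points because $F$ is ``locally bounded below by a positive constant times any hermitian metric,'' invoking upper semi-continuity of $F$ on the compact set of unit tangent vectors. Upper semi-continuity gives \emph{upper} bounds on compacta, not lower bounds; to get a positive infimum you need lower semi-continuity (or continuity) of $F$, which is not part of Definition~\ref{de:fins}. In the paper's intended application the Finsler metric $H=\sum p\,\alpha_p G_p$ is built from $L^2$-norms of harmonic projections and is continuous (indeed $C^\infty$ off a thin set), so the issue does not arise there, but as a proof of the proposition in the generality stated you should either add a continuity hypothesis or argue differently (e.g.\ observe that the comparison $k_Z \geq c\,F$ on tangent vectors already forces the infinitesimal Kobayashi--Royden metric to be strictly positive on nonzero vectors whenever $F$ is, and then use that $d_Z$ is the integrated distance of $k_Z$).
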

Given a Deligne-Mumford stack, the analytic methods also yield hyperbolicity of the stack or {\em hyperbolicity in the orbifold sense}, which in turn implies Brody hyperbolicity (in the orbifold sense).

\begin{proposition}\label{pr:stack}
Let $Z$ be a reduced complex space equipped with the analytic structure of an orbifold complex space. In particular $Z$ possesses a covering by  quotient spaces $S/\Gamma_S$, where $S$ is a reduced complex space and $\Gamma_S$ a finite group. Assume
\begin{itemize}
  \item[(i)] all spaces $S$ possess $\Gamma_S$-invariant Finsler metrics with the condition of Proposition~\ref{pr:ahlhyp}
  \item[(ii)] the data in $(i)$ are compatible with the orbifold structure of $Z$.
\end{itemize}
Then $Z$ is Kobayashi hyperbolic in the orbifold sense.
\end{proposition}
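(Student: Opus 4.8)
\emph{Proof proposal.} The plan is to reduce the statement to Proposition~\ref{pr:ahlhyp} by gluing the local data of hypothesis $(i)$ into a single negatively curved Finsler pseudo-metric on the orbifold. First I would fix the relevant notion of the orbifold Kobayashi pseudo-distance $\delta_Z$: it is built from chains of holomorphic disks $\varphi:\Delta\to Z$, where a holomorphic disk in $Z$ means a continuous map that locally lifts to a holomorphic map into a uniformizing chart $S$, two such local lifts over an overlap being related by the transition pseudogroup generated by the $\Gamma_S$. Orbifold hyperbolicity of $Z$ means that $\delta_Z$ is a genuine distance, and, as in the proof of Proposition~\ref{pr:ahlhyp}, it is enough to exhibit an honest continuous distance on $Z$ that is dominated by $\delta_Z$.

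Next I would use $(i)$ and $(ii)$ to descend and patch. By $\Gamma_S$-invariance each Finsler pseudo-metric on $S$ descends to a Finsler pseudo-metric on the quotient $S/\Gamma_S$, and by the compatibility in $(ii)$ these agree on overlaps; hence they define a global Finsler pseudo-metric $F:T_cZ\to\R_{\ge 0}$ in the sense of Definition~\ref{de:fins} (the subharmonicity of $\log\gamma_0$ along local curves is inherited chart by chart). Since holomorphic sectional curvature is computed along local holomorphic disks, and those lift to honest holomorphic maps into the charts $S$ where the curvature bound of hypothesis $(i)$ holds, the holomorphic sectional curvature of $F$ is still bounded above by the same negative constant $-A$.

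Then I would run the Ahlfors–Schwarz argument globally. Given any holomorphic disk $\varphi:\Delta\to Z$, choose local lifts into charts; by $\Gamma_S$-invariance and $(ii)$ the pullback $\varphi^*F=\ii\,\gamma_0(t)\,dt\we\ol{dt}$ is well defined independently of the choices, $\log\gamma_0$ is subharmonic, and the curvature bound reads $\ii\pt\db\log\gamma_0\ge A\,\varphi^*F$ in the sense of currents. Theorem~\ref{th:dem} then gives $\varphi^*F\le \dfrac{2}{A(1-|t|^2)^2}\,\ii\,dt\we\ol{dt}$, i.e.\ every holomorphic disk in $Z$ is distance-decreasing from the Poincaré disk up to a constant $c=c(A)>0$. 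Summing over chains yields $\delta_Z\ge c\,d_F$, where $d_F$ is the length pseudo-distance associated with $F$. Finally, exactly as in the proof of Proposition~\ref{pr:ahlhyp}, one checks that $d_F$ is in fact a genuine distance, so that $\delta_Z$ is a distance and $Z$ is hyperbolic in the orbifold sense; Brody hyperbolicity in the orbifold sense follows because an orbifold map $\C\to Z$ would contradict the disk estimate above.

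The main obstacle is not the curvature estimate — that step is essentially verbatim from the proof of Proposition~\ref{pr:ahlhyp} — but the orbifold bookkeeping: one must show that condition $(ii)$ genuinely forces the local Finsler pseudo-metrics to glue to a single object on $T_cZ$ and that every holomorphic disk in the orbifold admits the local lifts needed to pull $F$ back compatibly; relatedly, the degeneration locus of $F$ (a nowhere dense analytic set, across which the extra argument in the proof of Proposition~\ref{pr:ahlhyp} is used to see that $d_F$ separates points) must be controlled simultaneously in all charts, so that this argument becomes global thanks to the $\Gamma_S$-invariance and the compatibility of the charts.
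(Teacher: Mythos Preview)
The paper does not supply a proof of this proposition; it is stated immediately after Proposition~\ref{pr:ahlhyp} and is evidently meant to be read as the straightforward orbifold extension of that result, the proof being left to the reader. Your proposal spells out exactly this intended argument --- descend the $\Gamma_S$-invariant Finsler metrics to $Z$, pull back along orbifold disks via local lifts, and apply Theorem~\ref{th:dem} --- and is correct; the orbifold bookkeeping you flag as the only real issue is indeed the only thing there is to check.
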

We will also apply this concept when dealing with the notion of hyperbolicity modulo a subspace.

\subsection*{Construction of a Finsler metric}
So far, we still restrict ourselves to the case, where all direct image sheaves are locally free.

The functions $G_p$ define Finsler (pseudo-)metrics. Above we set
$$
\|A\|^2_{G_p}=\|A\|^2_p=\|A^p\|^{2/p} = \left(\int_X \|H(A\we \ldots \we A)\|^2(z) g(z)\right)^{1/p} dV.
$$

\begin{lemma}[cf.\ {\cite[Lemma~4]{sch-preprint08}}]\label{le:convsum}
Let $C$ be a complex curve and $G_j$ a collection of pseudo-metrics of
bounded curvature, whose sum has no common zero. Then for $1\leq k\leq n$   the curvatures satisfy the following equation.
\begin{equation}\label{eq:curvestK}
K_{\sum_{j=1}^k G_j} \leq \sum_{j=1}^k \frac{G_j^2}{(\sum_{i=1}^k G_i)^2} K_{G_j}.
\end{equation}
\end{lemma}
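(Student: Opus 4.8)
The statement to prove is the curvature inequality \eqref{eq:curvestK} for a convex-type combination of pseudo-metrics restricted to a complex curve $C$. The key observation is that on a curve all the $G_j$ are (possibly singular) hermitian metrics $G_j = \ii (g_j)_0(t)\, dt\wedge\ol{dt}$ whose logarithms are subharmonic, and the curvature in the sense of Theorem~\ref{th:dem} is (minus) the appropriate Laplacian of $\log$ of the coefficient, normalized by the metric itself. So the whole lemma is a pointwise computation with the operator $\Delta \log(\cdot)$ applied to a sum, valid off the discrete common zero set, extended across it by upper semicontinuity of the curvature / subharmonicity.

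\textbf{Step 1: Reduce to a pointwise inequality in one variable.} Write $\gamma_j = (g_j)_0$ for the coefficient of $G_j$ with respect to the coordinate $t$, so that $K_{G_j} = -\,\dfrac{\pt^2 \log \gamma_j}{\pt t\,\pt\ol t}\Big/\gamma_j$, and let $\gamma = \sum_{j=1}^k \gamma_j$ be the coefficient of $\sum G_j$, with $K_{\sum G_j} = -\,\dfrac{\pt^2\log\gamma}{\pt t\,\pt\ol t}\Big/\gamma$. Since the sum has no common zero, $\gamma > 0$ off a discrete set, and there each $\gamma_j$ is smooth (the pseudo-metrics are differentiable where they do not vanish), so all expressions make sense classically. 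The inequality to be shown becomes
\begin{equation*}
-\frac{1}{\gamma}\,\pt_t\pt_{\ol t}\log\gamma \;\le\; \sum_{j=1}^k \frac{\gamma_j^2}{\gamma^2}\cdot\Bigl(-\frac{1}{\gamma_j}\,\pt_t\pt_{\ol t}\log\gamma_j\Bigr),
\end{equation*}
i.e., multiplying through by $-\gamma$,
\begin{equation*}
\pt_t\pt_{\ol t}\log\gamma \;\ge\; \sum_{j=1}^k \frac{\gamma_j}{\gamma}\,\pt_t\pt_{\ol t}\log\gamma_j .
\end{equation*}

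\textbf{Step 2: Prove the pointwise inequality.} This is the heart of the matter, and it is where the real convexity argument sits. Expand $\pt_t\pt_{\ol t}\log\gamma = \dfrac{\pt_t\pt_{\ol t}\gamma}{\gamma} - \dfrac{|\pt_t\gamma|^2}{\gamma^2}$ and similarly for each $\gamma_j$. Using $\pt_t\pt_{\ol t}\gamma = \sum_j \pt_t\pt_{\ol t}\gamma_j$ and $\pt_t\pt_{\ol t}\gamma_j = \gamma_j\,\pt_t\pt_{\ol t}\log\gamma_j + \dfrac{|\pt_t\gamma_j|^2}{\gamma_j}$, substitute and reduce the claim to
\begin{equation*}
\frac{1}{\gamma}\sum_{j=1}^k \frac{|\pt_t\gamma_j|^2}{\gamma_j} \;\ge\; \frac{|\pt_t\gamma|^2}{\gamma^2} \;=\; \frac{1}{\gamma^2}\Bigl|\sum_{j=1}^k \pt_t\gamma_j\Bigr|^2 ,
\end{equation*}
which, after clearing $\gamma$, is exactly the Cauchy–Schwarz inequality $\bigl|\sum_j \pt_t\gamma_j\bigr|^2 \le \bigl(\sum_j \gamma_j\bigr)\bigl(\sum_j |\pt_t\gamma_j|^2/\gamma_j\bigr)$ applied to the vectors $(\sqrt{\gamma_j})$ and $(\pt_t\gamma_j/\sqrt{\gamma_j})$. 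This establishes the inequality at every point of $C$ outside the discrete bad set.

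\textbf{Step 3: Extend across the bad set and conclude.} Off a discrete set the inequality \eqref{eq:curvestK} holds; since curvature of a Finsler pseudo-metric is defined as the infimum of constants $-A$ in Theorem~\ref{th:dem} and $\log\gamma$ is subharmonic (a sum of subharmonic functions), the bound propagates: if each $K_{G_j} \le -A_j$ then the right side is bounded by $-\min_j A_j$ times a convex combination, hence negative, and the subharmonicity guarantees the current inequality $\ii\pt\db\log\gamma \ge (\cdot)\,\gamma$ holds across the isolated zeros as well. The only genuine obstacle is Step~2, and it turns out to be nothing more than Cauchy–Schwarz once the $\log$-derivatives are expanded correctly; the bookkeeping of which terms cancel is the one place to be careful. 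This is precisely the curve analogue of \cite[Lemma~4]{sch-preprint08}, so the argument can be cited in that form if a reader wants the higher-dimensional statement.
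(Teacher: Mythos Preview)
Your proof is correct. The paper itself gives no argument for this lemma, only the citation to \cite[Lemma~4]{sch-preprint08}; your pointwise computation reducing the inequality $\pt_t\pt_{\ol t}\log\gamma \ge \sum_j (\gamma_j/\gamma)\,\pt_t\pt_{\ol t}\log\gamma_j$ to the Cauchy--Schwarz inequality $\bigl|\sum_j \pt_t\gamma_j\bigr|^2 \le \bigl(\sum_j \gamma_j\bigr)\bigl(\sum_j |\pt_t\gamma_j|^2/\gamma_j\bigr)$ is exactly the standard argument, and the extension across the discrete zero set of the individual $\gamma_j$ (not of $\gamma$, which by hypothesis has none) via bounded curvature and subharmonicity is fine.
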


Like in \cite{sch-preprint08} we construct a Finsler metric from the  $G_p$. Note that $G_1$ is the \wp norm, whereas the further quantities $G_p$ are semi-norms by definition, which are continuous and of class $\cinf$ on the complement of their zero-set.

\begin{definition}
Define a Finsler metric on the base spaces of universal deformations by
\begin{equation}\label{eq:convsum}
H= \sum^n_{p=1}p\, \alpha_p\, G_p
\end{equation}
with $\alpha_p>0$.
\end{definition}

Now we chose $k$ like in the above Lemma. So by \eqref{eq:curvestK}
\begin{equation}\label{eq:KH}
K_H \leq \frac{1}{H^2} \sum^k_{p=1} p\,\alpha_p \, G^2_p K_p.
\end{equation}
In principle, the Finsler metric depends upon $k$: We drop the further terms $G_p$ that are identically zero, and account for these in the formula for the  curvature estimate.

\begin{lemma}\label{le:Kest}
\begin{equation}\label{eq:curv}
K_H \leq \frac{1}{H^2} \Big(- c\, \alpha_1 G^2_1 - \sum^k_{p=2} \Big(\alpha_{p} \frac{G^{p+1}_p}{ G^{p-1}_{p-1}} - \alpha_{p-1} \frac{G^{p}_p}{G^{p-2}_{p-1}}  \Big)\Big)
\, .
\end{equation}
\end{lemma}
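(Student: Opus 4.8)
The plan is to start from the estimate \eqref{eq:KH}, which already expresses $K_H$ as a weighted sum of the individual Finsler curvatures $K_p$, and then to substitute the curvature bounds \eqref{eq:Kpest} term by term, reorganizing the resulting sum by a discrete "integration by parts" (Abel summation) so that the positive contributions telescope against adjacent negative ones. Concretely, in \eqref{eq:KH} we have $K_H \le \frac{1}{H^2}\sum_{p=1}^k p\,\alpha_p\,G_p^2\,K_p$, and since every $\alpha_p>0$ and every $G_p^2\ge 0$, we may replace each $K_p$ by its upper bound from \eqref{eq:Kpest}: for $p=1$ this gives $p\,\alpha_1 G_1^2 K_1 \le \alpha_1 G_1^2(-c + G_2^2/G_1^2) = -c\,\alpha_1 G_1^2 + \alpha_1 G_2^2$, and for $1<p\le k$ it gives
$p\,\alpha_p G_p^2 K_p \le \alpha_p G_p^2\bigl(-G_p^{p-1}/G_{p-1}^{p-1} + G_{p+1}^{p+1}/G_p^{p+1}\bigr) = -\alpha_p\,G_p^{p+1}/G_{p-1}^{p-1} + \alpha_p\,G_{p+1}^{p+1}/G_p^{p}$.

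Next I would collect all these contributions and identify the cancellation pattern. The "good" (negative) term coming from level $p$ is $-\alpha_p\,G_p^{p+1}/G_{p-1}^{p-1}$, while the "bad" (positive) term is $+\alpha_p\,G_{p+1}^{p+1}/G_p^{p}$; this bad term at level $p$ has exactly the shape of (a reindexing of) the good term at level $p+1$, namely $-\alpha_{p+1}G_{p+1}^{p+2}/G_p^{p}$, up to the factor $\alpha_{p+1}/\alpha_p$ and one extra power of $G_{p+1}$. Grouping the bad term from level $p$ with the good term from level $p+1$, i.e. writing the sum over $p$ from $2$ to $k$ in the form $-\bigl(\alpha_p\,G_p^{p+1}/G_{p-1}^{p-1} - \alpha_{p-1}\,G_p^{p}/G_{p-1}^{p-2}\bigr)$, reproduces precisely the bracketed expression in \eqref{eq:curv}; the leftover $-c\,\alpha_1 G_1^2$ from the $p=1$ level sits outside, and the final bad term $\alpha_k\,G_{k+1}^{k+1}/G_k^{k}$ at the top level vanishes because $G_{k+1}\equiv 0$ by the choice of $k$ (we dropped all identically-zero $G_p$). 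Putting this together yields
$$
K_H \le \frac{1}{H^2}\Bigl(-c\,\alpha_1 G_1^2 - \sum_{p=2}^k\Bigl(\alpha_p\frac{G_p^{p+1}}{G_{p-1}^{p-1}} - \alpha_{p-1}\frac{G_p^{p}}{G_{p-1}^{p-2}}\Bigr)\Bigr),
$$
which is the claim.

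Two points require care, and the bookkeeping of the Abel summation is the main obstacle — not conceptually deep, but easy to get wrong with the shifting exponents and the index range. First, one must check that the reindexing of the bad term at level $p$ really matches the good term at level $p+1$ in both the $\alpha$-subscript and the powers of $G$; this is why the Finsler metric \eqref{eq:convsum} is weighted by $p\,\alpha_p$ rather than just $\alpha_p$, and the identity $\|A\|_{G_p}^2=\|A^p\|^{2/p}$ from the definition of $G_p$ is what makes the powers line up. Second, the substitution of \eqref{eq:Kpest} into \eqref{eq:KH} is only valid on the complement of a discrete subset of $C$ (where all the relevant $A^p\neq 0$ and the metrics are differentiable), exactly as in \eqref{eq:Kpest}; on that complement the inequality holds pointwise, and since the curvatures $K_H$, $K_p$ are understood in the sense of Definition \ref{de:fins} (via subharmonicity of $\log$ of the metric), this pointwise estimate on a full-measure open set is what is needed. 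The monotonicity used in passing from \eqref{eq:KH} to the final bound — replacing $K_p$ by an upper bound inside a sum with nonnegative coefficients $p\,\alpha_p G_p^2/H^2$ — is immediate.
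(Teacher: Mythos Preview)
Your approach is exactly the paper's: substitute the bounds \eqref{eq:Kpest} into \eqref{eq:KH} and shift indices so that the positive term from level $p-1$ pairs with the negative term from level $p$, the top term vanishing since $G_{k+1}\equiv 0$. One arithmetic slip to fix: when you simplify $G_p^2\cdot G_{p+1}^{p+1}/G_p^{p+1}$ you should get $G_{p+1}^{p+1}/G_p^{p-1}$, not $G_{p+1}^{p+1}/G_p^{p}$; with this correction the shifted bad term at level $p-1$ is $\alpha_{p-1}G_p^{p}/G_{p-1}^{p-2}$, which is precisely what appears in \eqref{eq:curv} (your final regrouped expression is correct, but the intermediate exponent is off by one).
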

\begin{proof}
  The Lemma follows immediately from \eqref{eq:KH}, and  \eqref{eq:Kpest} with a shift of indices.
\end{proof}
Note that $\|A\|_p=0$ implies $\|A\|_{p+1}=0$. We first treat \eqref{eq:curv} in a {\em formal way}.
\begin{proposition}\label{pr:formest}
  There exist numbers $0<\alpha_1\leq \alpha_2,\ldots \leq \alpha_n$, and $K>0$ such that for all positive numbers $G_j$, $j=1,\ldots,k$ with $k\leq n$ and $H$ being given by \eqref{eq:convsum}, the right hand side of \eqref{eq:curv} is less or equal to $-K$.
\end{proposition}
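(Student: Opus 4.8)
The plan is to prove that the right-hand side of \eqref{eq:curv} is $\le -K$ as a self-contained inequality among positive real numbers, in three moves: a scaling normalisation, a step in which the negative summands are absorbed into the positive ones, and a soft final comparison with $H^2$. Since the quantity in \eqref{eq:curv} is homogeneous of degree $0$ in $(G_1,\dots,G_k)$ — the substitution $G_j\mapsto\lambda G_j$ multiplies $H$ by $\lambda$ and each of $G_1^2$, $G_p^{p+1}/G_{p-1}^{p-1}$, $G_p^{p}/G_{p-1}^{p-2}$ by $\lambda^2$ — it suffices to bound the numerator
$$
N:=c\,\alpha_1 G_1^2+\sum_{p=2}^{k}\Big(\alpha_p\frac{G_p^{p+1}}{G_{p-1}^{p-1}}-\alpha_{p-1}\frac{G_p^{p}}{G_{p-1}^{p-2}}\Big)
$$
from below by a fixed positive multiple of $H^2=\big(\sum_{p=1}^{k}p\alpha_p G_p\big)^2$ for all $G_j>0$ (recall $c>0$). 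As the recursion for the $\alpha_p$ below depends only on $p$ and $\alpha_1,\dots,\alpha_{p-1}$, one sequence $0<\alpha_1\le\cdots\le\alpha_n$ works for every $k\in\{1,\dots,n\}$ at once, and the uniform $K$ is the minimum of the finitely many constants produced.

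The technical core is the removal of the negative terms. For each $p$ one has the exact monomial identity
$$
\frac{G_p^{p}}{G_{p-1}^{p-2}}=\Big(\frac{G_p^{p+1}}{G_{p-1}^{p-1}}\Big)^{\!p/(p+1)}\big(G_{p-1}^2\big)^{1/(p+1)},
$$
so Young's inequality bounds $\alpha_{p-1}G_p^{p}/G_{p-1}^{p-2}$ by an arbitrarily small multiple of $G_p^{p+1}/G_{p-1}^{p-1}$ plus a multiple of $G_{p-1}^2$; choosing $\alpha_p$ large relative to $\alpha_1,\dots,\alpha_{p-1}$ lets the first coefficient be taken to be at most $\tfrac14\alpha_p$, with the residual coefficient of $G_{p-1}^2$ as small as we please. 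The leftover terms $\propto G_{p-1}^2$ are then pushed one level down at a time, using for $q\ge 2$ the identity $G_q^2=(G_q^{q+1}/G_{q-1}^{q-1})^{2/(q+1)}(G_{q-1}^2)^{(q-1)/(q+1)}$ and Young's inequality again, each time spending only another small fraction (say another $\tfrac14\alpha_q$) of the good term $\alpha_q G_q^{q+1}/G_{q-1}^{q-1}$ and producing a new, smaller leftover $\propto G_{q-1}^2$; at the bottom everything is collected as a multiple of $G_1^2$ and absorbed into $c\,\alpha_1 G_1^2$. Taking the $\alpha_p$ recursively and sufficiently rapidly increasing, so that all these residual constants are small enough, one arrives at
$$
N\ \ge\ c_0\,G_1^2+\sum_{p=2}^{k}c_p\,\frac{G_p^{p+1}}{G_{p-1}^{p-1}},\qquad c_0,c_2,\dots,c_k>0,
$$
where the $c_i$ depend only on $c$ and the $\alpha_p$.

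For the last step, let $L$ denote the right-hand side of the previous display. From $c_0G_1^2\le L$ we get $G_1\le C_1\sqrt L$ with $C_1=c_0^{-1/2}$, and inductively, from $c_pG_p^{p+1}/G_{p-1}^{p-1}\le L$ together with $G_{p-1}\le C_{p-1}\sqrt L$, we get $G_p^{p+1}\le (C_{p-1}^{p-1}/c_p)\,L^{(p+1)/2}$, hence $G_p\le C_p\sqrt L$ with $C_p=(C_{p-1}^{p-1}/c_p)^{1/(p+1)}$ depending only on $c_0,c_2,\dots,c_p$. Consequently $H=\sum_{p=1}^k p\alpha_pG_p\le\big(\sum_{p=1}^k p\alpha_pC_p\big)\sqrt L$, so $N\ge L\ge H^2\big/\big(\sum_{p=1}^k p\alpha_pC_p\big)^2$, which is the asserted estimate with $K$ the reciprocal of the square, minimised over $k$. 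The step I expect to be the genuine obstacle is the bookkeeping in the middle paragraph: one must check that the $\alpha_p$ can be chosen recursively so that \emph{all} the residual $G_1^2$-constants accumulated down the chain of levels remain strictly below $c\,\alpha_1$, i.e.\ that the positive contributions really do dominate once the weights grow fast enough; the first and third steps are, respectively, a triviality and a soft bootstrap.
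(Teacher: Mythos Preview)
Your argument is correct, but it follows a genuinely different route from the paper's. The paper's proof hinges on the single elementary polynomial inequality
\[
x^{p+1}-x^p-\tfrac12 x^2+\tfrac12=(x-1)\Big(x^p-\tfrac{x+1}{2}\Big)\geq 0\qquad(x\geq 0,\ p\geq 1),
\]
applied with $x=\dfrac{\alpha_p}{\alpha_{p-1}}\cdot\dfrac{G_p}{G_{p-1}}$ and then multiplied by $\dfrac{\alpha_{p-1}^{p+1}}{\alpha_p^{p}}\,G_{p-1}^2$. Summing over $p$ this converts the entire bracket in \eqref{eq:curv} in one stroke into a combination $-\sum_p\gamma_p G_p^2$, and a simple inductive choice of $\alpha_p$ makes all $\gamma_p>0$; the comparison with $H^2$ is then immediate from Cauchy--Schwarz. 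Your approach instead uses weighted Young's inequality twice (once to split off $G_p^{p}/G_{p-1}^{p-2}$, once to cascade the residual $G_{p-1}^2$ terms down the chain), and finishes with a bootstrap $G_p\le C_p\sqrt L$ in place of Cauchy--Schwarz. The paper's route is shorter and sidesteps precisely the bookkeeping you flag as the obstacle; your route uses only generic off-the-shelf inequalities and makes no use of the special algebraic structure, so it would transplant more easily to variants of the problem. Both recursive choices of the $\alpha_p$ can be made increasing, as the statement requires.
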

The statement yields boundedness of the curvature, even if some of the $G_p$,
$p>1$ tend to zero.

It will be sufficient to treat the most general case $k=n$, because the coefficients $\alpha_j$ are chosen inductively. Details are in Section~\ref{se:formest}.

\section{Singular direct image sheaves -- \\ Application to universal deformations and moduli spaces}
In \cite{sch-inv} we explained, how to deal with the locus, where the direct image sheaves $R^pf_*\Lambda^p\cT_{\cX/S}$ are {\em not locally free}. The method is to consider transversal holomorphic curves in the following sense.

Let a holomorphic family of canonically polarized manifolds be defined over an analytic disk $C$. Then on the complement of a discrete set all direct images $R^pf_*\Lambda^p\cT_{\cX_C/C}$ are locally free. We assume that $0$ is the only such point. Denote by $H$ the above Finsler metric over $C\backslash \{0\}$.
\begin{lemma}\label{le:subhext}
The upper semi-continuous extension of $H= H(s)\ii ds \we \ol{ds}$ to $C$ gives rise to a \psh\ extension of $\log H$ such that
$$
\idb \log{H(s)}\geq K H(s)\ii ds \we \ol{ds}
$$
in the sense of currents, where $-K<0$ is the curvature of the Finsler metric $H$.
\end{lemma}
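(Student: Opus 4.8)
The plan is to reduce the statement to an application of the classical removable-singularity principle for subharmonic functions, after first establishing the differential inequality on the punctured disk. First I would recall that on $C\backslash\{0\}$ all the direct image sheaves $R^pf_*\Lambda^p\cT_{\cX_C/C}$ are locally free, so the entire machinery of Sections~7--8 applies there: the semi-norms $G_p=\|A\|_p^2$ are defined, continuous everywhere, and of class $\cinf$ away from their zero sets, and the Finsler metric $H=\sum_p p\,\alpha_p G_p$ is given by \eqref{eq:convsum} with the coefficients $\alpha_p$ furnished by Proposition~\ref{pr:formest}. By that proposition, wherever $H(s)\neq 0$ the curvature estimate \eqref{eq:curv} gives $K_H\leq -K<0$, i.e.
$$
\idb \log H(s)\geq K\, H(s)\,\ii\, ds\we\ol{ds}
$$
pointwise on the open dense subset of $C\backslash\{0\}$ where $H>0$ and where all the $G_p$ that are not identically zero are nonzero. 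On the exceptional points of $C\backslash\{0\}$ (a discrete set) one still needs the inequality in the sense of currents; here I would invoke Definition~\ref{de:fins}, which guarantees that $\log H(s)$ restricted to a local curve is subharmonic and $\not\equiv-\infty$, so that $\idb\log H\geq 0$ as a current there, while the right-hand side is a bounded $(1,1)$-form, and the inequality persists across the discrete exceptional set by the standard fact that subharmonicity and such distributional inequalities are not destroyed by removing a discrete set (one may also argue directly from \eqref{eq:curvestK} of Lemma~\ref{le:convsum}, since each $\log G_j$ is subharmonic and the $K_{G_j}$ are uniformly bounded above).

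The real content is extending $\log H$ across $s=0$. Here I would first check that $\log H$ is bounded above near $0$: the quantities $G_p=\|H(A\we\cdots\we A)\|^{2/p}$ depend continuously (indeed real-analytically where nonzero) on $s$ through the harmonic \ks forms, and the $\|A\|$-data extend continuously to $0$ because the relative \ke metric varies $\cinf$ (Proposition~\ref{pr:relka}) and the harmonic projections vary continuously on the locally free locus; hence $H(s)$ stays bounded as $s\to 0$, so $\log H$ is bounded above there. Being subharmonic and bounded above on the punctured disk, $\log H$ extends to a subharmonic function on all of $C$ by setting $\log H(0)=\limsup_{s\to 0}\log H(s)$ (Riemann's removable singularity theorem for subharmonic functions; the extension is automatically upper semi-continuous and $\not\equiv-\infty$ since $H\not\equiv 0$). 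This is the upper semi-continuous \psh\ extension claimed.

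It then remains to see that the current inequality $\idb\log H\geq K\,H\,\ii\,ds\we\ol{ds}$ survives the extension across $0$. Since the right-hand side is a fixed $L^1_{\mathrm{loc}}$ $(1,1)$-form (a bounded measurable multiple of Lebesgue measure — $H$ is bounded near $0$) and $\log H$ is now globally subharmonic, the difference $\idb\log H - K\,H\,\ii\,ds\we\ol{ds}$ is a current that is positive on $C\backslash\{0\}$; a positive current of bidegree $(1,1)$ on a disk that is positive off a single point is positive, because its negative part, being supported on $\{0\}$, would have to be a multiple of the Dirac mass, which is impossible for the Laplacian of a function that is bounded above (the positive measure $\Delta\log H$ has no atom at $0$ by the bound on $\log H$). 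Thus the inequality holds on all of $C$ in the sense of currents. The main obstacle, and the step deserving the most care, is precisely this last point — controlling the behaviour of $\log H$ and of the Monge--Ampère-type measure $\idb\log H$ at the degeneration point $0$, i.e.\ ruling out a singular (atomic) contribution there; everything else is either quoted from the locally free theory developed above or is the standard removable-singularity formalism for subharmonic functions.
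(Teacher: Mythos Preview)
Your overall architecture matches the paper's: establish the curvature inequality on $C\backslash\{0\}$, show $\log H$ is bounded above near $0$, and then invoke the removable-singularity theorem for subharmonic functions. The paper's proof is in fact much terser than yours; it does not spell out the current-extension argument at all, and your treatment of that step (ruling out an atom at $0$ via the upper bound on $\log H$) is fine.

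However, your justification of the crucial step --- boundedness of $H(s)$ as $s\to 0$ --- does not work as written. You argue that ``the harmonic projections vary continuously on the locally free locus; hence $H(s)$ stays bounded as $s\to 0$.'' But continuity on $C\backslash\{0\}$ says nothing about a bound at the puncture; indeed, since $s=0$ is precisely where $\dim H^p(\cX_s,\Lambda^p\cT_{\cX_s})$ jumps, the harmonic projection operator is \emph{not} continuous there, and one cannot infer boundedness from continuity on the free locus. The paper's argument avoids this entirely. It picks, via a differentiable trivialization, representatives $B_s$ of the \ks class that are $\cinf$ in $s$ \emph{across} $s=0$ (equivalently, one may use the explicit harmonic $p=1$ form $A_s$ from Lemma~\ref{le:horli}, which depends smoothly on the metric and hence on $s$). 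Then $B_s\wedge\ldots\wedge B_s$ represents the same class as $A_s\wedge\ldots\wedge A_s$ in $H^p(\cX_s,\Lambda^p\cT_{\cX_s})$, and since the harmonic representative minimizes the $L^2$-norm in its cohomology class,
\[
\|A^p_s\|^2 \;=\; \|H(A_s\wedge\ldots\wedge A_s)\|^2 \;\leq\; \|B_s\wedge\ldots\wedge B_s\|^2 \;\leq\; \textit{const.}
\]
for all $s\neq 0$, the last bound coming from the $\cinf$ dependence of $B_s$ and the \ke metrics on $s$. This is the point you are missing: use the norm-minimizing property of harmonic projection (equivalently, $\|H(\cdot)\|\leq\|\cdot\|$) applied to a smoothly varying representative, rather than trying to control the harmonic projection itself at the degeneration point.
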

\begin{proof}
  We show that all $G_p$ are bounded from above. Let $A_s$ be the harmonic \ks form representing $\rho(\pt/\pt s|_s)$ for $s \neq 0$. On the other hand a differentiable trivialization of the family (near $s=0$) yields representatives $B_s$ for all $s$ including $0$ depending in a $\cinf$ way upon the parameter. Also the family of \ke metrics depends in a $\cinf$ way upon $s$. Now
  $$
  \|A^p_s\|^2 = \|H(A_s\we \ldots \we A_s)\|^2 \leq \|B_s \we\ldots\we B_s\|^2\leq \textit{const}.
  $$
  for all $s\neq 0$. So $\log H$ extends as a \psh\ function.
\end{proof}
  Now Theorem~\ref{th:dem} is applicable, and $H$ can be estimated from above by the Poincaré metric up to a factor of $1/K$.

\subsection*{Stratification of the moduli space}
We are carrying along the orbifold structure of the moduli space $\cM$ in the sense of Proposition~\ref{pr:stack}, and as described above all Finsler metrics are to be considered in the orbifold sense.

\begin{definition}
  Let $Z$ be a complex space and $W$ a closed subspace. Denote by $d_Z$ the Kobayashi pseudo-distance. Then $Z$ is called hyperbolic modulo $W$, if $d_Z(p,q)>0$ for all distinct points $p$ and $q$, unless both are contained in $W$.
\end{definition}

The moduli stack $\cM$ carries a natural stratification $ \cM_N\subsetneq \ldots \subsetneq \cM_1 \subsetneq \cM$, where all direct image sheaves $R^pf_*\Lambda^p\cT_{\cX/S}$ are locally free on all $\cM_j\backslash \cM_{j+1}$.

\begin{proposition}\label{pr:hypmod}
  All $\cM_j$ are hyperbolic modulo $\cM_{j+1}$.  Moreover, $\mathcal M_j\backslash \mathcal M_{j+1} \subset \mathcal M_j$ are hyperbolically embedded.
\end{proposition}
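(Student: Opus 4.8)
The plan is to carry out the whole argument inside a fixed stratum $\cM_j$, working $\Gamma_S$-equivariantly in the orbifold charts of Proposition~\ref{pr:stack}. So let $S$ be the base of a universal deformation of a manifold in $\cM_j$, let $S'\subset S$ be the closed analytic subset corresponding to $\cM_j$, and let $S_{j+1}\subset S'$ be the one corresponding to $\cM_{j+1}$. On $S'\setminus S_{j+1}$ all direct image sheaves $R^pf_*\Lambda^p\cT$ of the induced family are locally free, so Theorem~\ref{th:curvgendual}, Lemma~\ref{le:Kest} and Proposition~\ref{pr:formest} furnish there the $\Gamma_S$-invariant Finsler metric $H=\sum_{p=1}^n p\,\alpha_p G_p$ of holomorphic sectional curvature $\leq -K<0$. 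Since the family is effectively parameterized (canonically polarized manifolds have no infinitesimal automorphisms, so the Kodaira-Spencer map stays injective on $T_sS'\subset T_sS$), $G_1$ is the genuine smooth positive definite \wp metric on all of $S'$ by the fiber integral formula \eqref{eq:fibint}, which manifestly extends across $S_{j+1}$; hence $H\geq\alpha_1 G_1>0$ everywhere.

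First I would extend $H$ across $S_{j+1}$. If $\phi\colon\Delta\to S'$ is holomorphic with $\phi(\Delta)\not\subset S_{j+1}$, then $\phi^{-1}(S_{j+1})$ is a discrete subset of $\Delta$ and Lemma~\ref{le:subhext} applies to the pulled-back family: the function $\log\phi^*H$ extends subharmonically to $\Delta$, is not identically $-\infty$ (being $\geq\log\phi^*(\alpha_1 G_1)>-\infty$), and the estimate $\idb\log\phi^*H\geq K\,\phi^*H$ holds on $\Delta$ in the sense of currents. Together with the local boundedness of each $G_p$ near $S_{j+1}$ — obtained in the proof of Lemma~\ref{le:subhext} from a differentiable trivialization and the $\cinf$ dependence of the \ke metrics — this shows that $H$ extends to an upper semicontinuous $\Gamma_S$-invariant Finsler pseudo-metric on all of $S'$ in the sense of Definition~\ref{de:fins}, descending to $\cM_j$.

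Next I would invoke Demailly's Ahlfors Lemma, Theorem~\ref{th:dem}: for every holomorphic $\phi\colon\Delta\to\cM_j$ whose image is not contained in $\cM_{j+1}$ one gets $\phi^*H\leq\frac{2}{K(1-|t|^2)^2}\ii\,dt\we\ol{dt}$, i.e.\ $\phi$ is distance decreasing, up to the fixed factor $\sqrt{K/2}$, from the Poincaré metric to $H$. Put $\mu=\sqrt{K/2}\,H$; then $\mu\geq\sqrt{K/2}\,\alpha_1 G_1$ bounds below a genuine continuous hermitian metric on $\cM_j$, so $d_\mu$ is a genuine distance. This already yields the second assertion: every holomorphic disk into $\cM_j\setminus\cM_{j+1}$ has image not contained in $\cM_{j+1}$, hence is $\mu$-distance decreasing from Poincaré, so $d_{\cM_j\setminus\cM_{j+1}}\geq d_\mu|_{\cM_j\setminus\cM_{j+1}}$, which is exactly hyperbolic embeddedness. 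For the first assertion, let $p,q\in\cM_j$ be distinct with $p\notin\cM_{j+1}$, and choose a relatively compact coordinate neighbourhood $U$ of $p$ in $\cM_j$ with $\ol U\cap\cM_{j+1}=\emptyset$ and $q\notin\ol U$, so that $\mu\geq\varepsilon\cdot(\text{Euclidean metric})$ on $\ol U$ for some $\varepsilon>0$. A chain of holomorphic disks $f_1,\dots,f_m$ joining $p$ to $q$ has a first disk $f_{i^*}$ whose relevant arc leaves $U$; none of $f_1,\dots,f_{i^*}$ has image inside $\cM_{j+1}$, since their base points lie in $U$, and $\phi^{-1}(\cM_{j+1})$ is analytic in $\Delta$ hence discrete once it is proper. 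So Theorem~\ref{th:dem} bounds the $\mu$-length of each of these arcs by its Poincaré length; concatenating, the resulting path runs inside $\ol U$ from $p$ to $\partial U$, hence has $\mu$-length $\geq\varepsilon\,d_{\mathrm{Euc}}(p,\partial U)>0$. Thus the total Poincaré length of the chain is bounded below by this fixed positive number, so $d_{\cM_j}(p,q)>0$.

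The main obstacle is precisely the passage to $\cM_{j+1}$: $H$ is built only from the curvature formula on the locally free locus, and one must ensure it extends across $\cM_{j+1}$ with the curvature inequality intact as a current inequality — which is exactly the work of Lemma~\ref{le:subhext} and its underlying boundedness estimate for the $G_p$ — and that every step is carried out $\Gamma_S$-equivariantly so as to descend to the orbifold $\cM_j$ of Proposition~\ref{pr:stack}. The secondary point needing care is the bookkeeping of Kobayashi chains through $\cM_{j+1}$: only disks whose entire image lies in $\cM_{j+1}$ escape the curvature bound, and such disks cannot occur as long as one of their base points lies outside $\cM_{j+1}$, which is what makes the argument produce hyperbolicity \emph{modulo} $\cM_{j+1}$ rather than on all of $\cM_j$.
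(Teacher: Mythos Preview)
Your proof is correct and follows the same approach as the paper: construct the Finsler metric $H$ on the locally free locus via Lemma~\ref{le:Kest} and Proposition~\ref{pr:formest}, extend it across $\cM_{j+1}$ by Lemma~\ref{le:subhext}, and apply Demailly's Ahlfors Lemma (Theorem~\ref{th:dem}). The paper's own proof is a single sentence citing exactly these ingredients; you have supplied the Kobayashi-chain argument for hyperbolicity modulo $\cM_{j+1}$ and the length-function criterion for hyperbolic embeddedness that the paper leaves implicit.
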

\begin{proof}
  By our construction, Lemma~\ref{le:Kest}, Proposition~\ref{pr:formest} the spaces $\cM_j\backslash \cM_{j+1}$ are hyperbolic, and Le-ma~\ref{le:subhext} together with Theorem~\ref{th:dem} imply the claim.
\end{proof}

\begin{theorem}
  The moduli space $\cM$ of canonically polarized compact manifolds is hyperbolic in the orbifold sense.
\end{theorem}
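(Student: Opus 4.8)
The plan is to prove hyperbolicity of $\cM=\cM_0$ by descending induction along the finite stratification $\cM_N\subsetneq\cdots\subsetneq\cM_1\subsetneq\cM_0=\cM$, with inductive hypothesis ``$\cM_j$ is Kobayashi hyperbolic in the orbifold sense''. For the bottom stratum the complement $\cM_N\setminus\cM_{N+1}=\cM_N$ is the entire locally free locus, so the convex sum $H=\sum_{p}p\,\alpha_p\,G_p$ of \eqref{eq:convsum} is a Finsler pseudo-metric on all of $\cM_N$; by Lemma~\ref{le:Kest} and Proposition~\ref{pr:formest} its holomorphic sectional curvature is bounded above by a negative constant $-K$, so Proposition~\ref{pr:ahlhyp}, read on the local uniformizing charts $S/\Gamma_S$ and combined with Proposition~\ref{pr:stack}, shows $\cM_N$ is hyperbolic in the orbifold sense. (This is exactly Proposition~\ref{pr:hypmod} at $j=N$, where $\cM_{N+1}=\emptyset$.)

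For the inductive step, suppose $\cM_{j+1}$ is hyperbolic in the orbifold sense. Proposition~\ref{pr:hypmod} supplies two properties of the pair: $\cM_j$ is hyperbolic modulo $\cM_{j+1}$, and $\cM_j\setminus\cM_{j+1}$ is hyperbolically embedded in $\cM_j$. The first property immediately separates any two distinct points of which at least one lies off $\cM_{j+1}$. For two distinct points $p,q\in\cM_{j+1}$ one invokes the standard patching principle for hyperbolic complex spaces (going back to work of Kiernan and Kobayashi): if $W$ is a closed complex subspace of a complex space $Z$ such that $W$ is hyperbolic and $Z\setminus W$ is hyperbolically embedded in $Z$, then $Z$ is hyperbolic --- here applied in the orbifold category via $\Gamma_S$-equivariant length functions and Proposition~\ref{pr:stack}. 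Concretely, the length function witnessing the hyperbolic embedding is the upper semicontinuous extension of $H$ furnished by Lemma~\ref{le:subhext}, whose logarithm is plurisubharmonic along holomorphic curves and which, by Theorem~\ref{th:dem}, is dominated by a multiple of the Poincaré metric; hence a Kobayashi chain in $\cM_j$ joining $p$ and $q$ cannot be shortened without bound: the subchains lying generically in $\cM_j\setminus\cM_{j+1}$ are controlled by this Ahlfors--Demailly estimate, and the subchains lying inside $\cM_{j+1}$ by the inductive hypothesis. Thus $d_{\cM_j}(p,q)>0$, so $\cM_j$ is hyperbolic in the orbifold sense, and descending to $j=0$ yields the theorem.

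The main obstacle is precisely this inductive step --- turning ``$\cM_j$ hyperbolic modulo $\cM_{j+1}$, with $\cM_{j+1}$ hyperbolic and $\cM_j\setminus\cM_{j+1}$ hyperbolically embedded'' into ``$\cM_j$ hyperbolic''. The delicate point is that a chain of holomorphic disks computing $d_{\cM_j}(p,q)$ for $p,q\in\cM_{j+1}$ may pass repeatedly in and out of the lower stratum, so one must verify that the Ahlfors--Demailly estimate on the open part (Lemma~\ref{le:subhext}, Theorem~\ref{th:dem}) and the inductive bound on $\cM_{j+1}$ cannot be played against each other to make chains arbitrarily short; this is exactly what hyperbolic-embeddedness of $\cM_j\setminus\cM_{j+1}$ in $\cM_j$ prevents, and it must be checked to be compatible with, i.e.\ carried out $\Gamma_S$-equivariantly on, the orbifold uniformizers of Proposition~\ref{pr:stack}. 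Finiteness of the stratification then ensures the induction closes after finitely many steps at $\cM=\cM_0$.
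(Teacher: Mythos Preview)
Your proposal is correct and follows the same route as the paper: the paper's proof is the single line ``follows from Proposition~\ref{pr:hypmod} by induction,'' and you have spelled out exactly that induction along the stratification $\cM_N\subsetneq\cdots\subsetneq\cM_0=\cM$. Your identification of the base case, the use of the Kiernan--Kobayashi type patching principle (hyperbolic subspace plus hyperbolically embedded complement implies hyperbolic) for the inductive step, and your explicit acknowledgment that the delicate point is controlling Kobayashi chains that weave between strata, are all faithful expansions of what the paper leaves implicit.
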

The {\em proof}\/ follows from Proposition~\ref{pr:hypmod} by induction.

\section{computation of the curvature}\label{se:proofcurv}
We present the idea of the proof.

Sections of \RP were denoted by letters like $\psi$.
\begin{gather*}
\psi|_{\cX_s} = \psi_{\alpha_1,\ldots,\alpha_p,\ol\beta_{p+1},\ldots,\ol\beta_n}
dz^{\alpha_1}\wedge \ldots \wedge dz^{\alpha_p}\wedge dz^{\ol\beta_{p+1}}\we
\ldots \we dz^{\ol\beta_n} \\
= \psi_{A_p\ol B_{n-p}} dz^{A_p}\we dz^{\ol B_{n-p}} \hspace{5cm}
\end{gather*}
where $A_p=(\alpha_1,\ldots,\alpha_p)$ and $\ol B_{n-p}=(\ol\beta_{p+1},
\ldots,\ol\beta_n)$. The form $\psi$ is taken according to Proposition~\ref{pr:harmsec}. The further component of $\psi$ is
$$
\psi_{\alpha_1,\ldots,\alpha_p,\ol\beta_{p+1},\ldots,\ol\beta_{n-1},\ol s}
dz^{\alpha_1}\wedge \ldots \wedge dz^{\alpha_p} \we dz^{\ol\beta_{p+1}}\we
\ldots \we dz^{\ol\beta_{n-1}}\we \ol{ds}.
$$
Now Proposition \ref{pr:harmsec} implies
\begin{equation}
\psi_{\alpha_1,\ldots,\alpha_p,\ol \beta_{p+1}, \ldots, \ol\beta_n |\ol s}
= \sum_{j=p+1}^n (-1)^{n-j}
\psi_{\alpha_1,\ldots,\alpha_p,\ol \beta_{p+1}, \ldots, \wh{\ol\beta}_j, \ldots, \ol\beta_n ,\ol s|\ol\beta_j }.
\end{equation}
Let
$$
H^{\ol \ell k}= \langle \psi^k,\psi^\ell\rangle.
$$
Derivatives with respect to a parameter of the base are needed. Let $s$ be a coordinate and $v$ the horizontal lift of $\pt/\pt s$ according to Lemma~\ref{le:horli}. The Lie derivative for tensors on the total space is denoted by $L_v$. Then
$$
\frac{\pt}{\pt s} H^{\ol\ell k} =  \langle L_v \psi^k, \psi^\ell  \rangle + \langle  \psi^k, L_\ol v  \psi^\ell \rangle
$$
because $L_v(g\/ dV)=0$.
Taking Lie derivatives is not type preserving -- in fact
$$
L_v\psi = L_v\psi' + L_v\psi'',
$$
where $(L_v\psi)'$ is of type $(p,n-p)$ and $(L_v\psi)''$ is of type
$(p-1,n-p+1)$.
A straightforward calculation shows the following relations.
\begin{lemma}
\begin{eqnarray}
  L_v\,\psi'' &=& A_s \cup \psi \label{eq:2} \\
  L_\ol v\,\psi'' &=& (-1)^p A_\ol s \cup \psi \label{eq:3}
\end{eqnarray}
\end{lemma}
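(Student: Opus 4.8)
The plan is to verify the two formulas in the Lemma, namely $L_v\psi'' = A_s\cup\psi$ and $L_{\ol v}\psi'' = (-1)^p A_{\ol s}\cup\psi$, by a direct local computation of the Lie derivative along the horizontal lift $v = \partial/\partial s + a^\alpha(z,s)\partial/\partial z^\alpha$ from Lemma~\ref{le:horli}. First I would recall that $\psi$ is a $\db$-closed $(0,n-p)$-form with values in $\Omega^p_{\cX/S}(\cK_{\cX/S})$ in the sense of Proposition~\ref{pr:harmsec}, whose restriction to each fiber is harmonic; written out on the total space it has the two components displayed just above the Lemma, the ``horizontal'' part $\psi_{A_p\ol B_{n-p}}\,dz^{A_p}\we dz^{\ol B_{n-p}}$ and the part involving $d\ol s$. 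The Lie derivative $L_v$ does not preserve bidegree because $v$ is only differentiable, not holomorphic, along the fibers; the $(p-1,n-p+1)$-component $(L_v\psi)''$ is exactly the piece that picks up one extra antiholomorphic fiber-differential, and this is where the $\db$ of the lift coefficient $a^\alpha$ --- i.e.\ the \ks form $A_s = \db(v)|\cX_s = a^\alpha_{;\ol\beta}\,\partial_\alpha\, dz^{\ol\beta}$ --- enters.

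The key steps, in order: (1) expand $L_v = d\circ\iota_v + \iota_v\circ d$ (Cartan's formula) applied to $\psi$, or alternatively expand $L_v$ coefficient-wise as $v$ acting on components plus the derivative-of-frame terms; (2) separate the result by bidegree and isolate the $(p-1,n-p+1)$-part; (3) observe that $d\ol s$ contributes when $\iota_v$ hits it, producing a term with one $dz^\alpha$ replaced --- after contracting with the holomorphic part of $v$ this is exactly the cup product with $\partial_\alpha\, dz^{\ol\beta}\mapsto a^\alpha_{;\ol\beta}$; (4) use that the restriction $\psi_s$ to the fiber is $\db$-closed (harmonic) so that terms coming from $\db_{\cX_s}$ of the fiber coefficients cancel, and that only the antiholomorphic derivative $a^\alpha_{;\ol\beta}=A^\alpha_{s\,\ol\beta}$ survives rather than the full $\partial a$; (5) compare the surviving term with the explicit formula for $A\cup\mbox{\textvisiblespace}$ in Definition~\ref{de:cup} (line \eqref{eq:cup1}), which identifies it verbatim as $A_s\cup\psi$. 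For the second identity \eqref{eq:3} one runs the same computation with $\ol v$ in place of $v$; now the relevant differential contracted is $dz^\alpha$ (not $d\ol s$), which lies among the first $p$ holomorphic factors of the $(p,n-p)$-component, so commuting $\iota_{\ol v}$ past those $dz$'s past and relabeling produces the sign $(-1)^p$, and the contraction is with $\ol a^{\ol\beta}_{;\gamma} = \ol A_{s}{}^{\ol\beta}_{\;\gamma}$, giving $(-1)^p A_{\ol s}\cup\psi$ in the notation of \eqref{eq:cup2}.

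The main obstacle I expect is bookkeeping rather than conceptual: keeping track of the Koszul signs from permuting $\iota_v$ and the various one-forms $dz^{\alpha_i}$, $dz^{\ol\beta_j}$, $d\ol s$, and making sure that the ``extra component'' of $\psi$ (the one with $d\ol s$, governed by the relation displayed right before the Lemma) is fed in correctly so that what one gets is a genuine fiber $(0,n-p+1)$-form and not something with a spurious $d\ol s$. A secondary point is to confirm that the covariant vs.\ ordinary derivative distinction ($a^\alpha_{;\ol\beta}$ vs.\ $a^\alpha_{|\ol\beta}$) is immaterial here because the Christoffel correction involves only holomorphic derivatives of the metric and hence does not contribute to the antiholomorphic $\db$-part; this is the ``straightforward calculation'' the paper alludes to, and it is precisely the \ke/harmonicity input (via $\db^*A_s=0$ from Lemma~\ref{le:harmks} and the harmonicity of $\psi_s$) that makes the unwanted terms drop out. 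I would organize the write-up so that \eqref{eq:2} is done in full and \eqref{eq:3} is deduced by the symmetric argument with the sign explained, rather than repeating the whole computation.
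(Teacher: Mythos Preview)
The paper gives no proof beyond the sentence ``A straightforward calculation shows the following relations,'' so there is nothing detailed to compare against; your overall plan---compute $L_v\psi$ in coordinates via the Leibniz rule, isolate the bidegree-shifting piece, and match it against Definition~\ref{de:cup}---is exactly what is intended and will succeed.

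However, your description of the \emph{mechanism} in steps (3) and (4) is off and should be corrected before you write anything out. In (3) you say the $d\ol s$-component contributes when $\iota_v$ hits it, but $\iota_v\,d\ol s=0$ since $v=\pt_s+a^\alpha\pt_\alpha$ is of type $(1,0)$; the $d\ol s$-component of $\psi$ plays no role in $(L_v\psi)''|_{\cX_s}$ because every term still carrying $d\ol s$ dies upon restriction to the fiber. The actual source of the type change is simply that $L_v(dz^\alpha)=d(a^\alpha)$ has a $(0,1)$-fiber part $\pt_{\ol\beta}a^\alpha\,dz^{\ol\beta}=A^\alpha_{s\,\ol\beta}\,dz^{\ol\beta}$; applying the Leibniz rule to $dz^{A_p}$ replaces one $dz^{\alpha_i}$ by this, which is precisely $A_s\cup\psi$. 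Likewise $L_v(dz^{\ol\beta})=0$, so nothing else enters. For \eqref{eq:3} the analogous source is the $(1,0)$-fiber part of $L_{\ol v}(dz^{\ol\beta})=d(\ol a^{\ol\beta})$, and the sign $(-1)^p$ comes from commuting the resulting $dz^\gamma$ past the block $dz^{A_p}$; your remark that $\iota_{\ol v}$ contracts $dz^\alpha$ is again wrong, since $\iota_{\ol v}dz^\alpha=0$.

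Step (4) overstates what is needed: this lemma uses no harmonicity at all---neither $\ol\pt^*A_s=0$ nor the harmonicity of $\psi_s$. Because $L_v(dz^{\ol\beta})=0$ there are no ``$\ol\pt_{\cX_s}\psi$'' terms to cancel; the identity is purely algebraic in the components of $v$ and $\psi$. (Harmonicity matters elsewhere in Section~\ref{se:proofcurv}, e.g.\ for \eqref{eq:4}--\eqref{eq:7}, but not here.) Your remark on covariant versus ordinary derivatives is correct but trivial: mixed Christoffel symbols vanish for a \ka metric, so $a^\alpha_{|\ol\beta}=a^\alpha_{;\ol\beta}$ automatically.
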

Lie derivatives in conjugate directions yield $\ol\pt$-closed forms.
\begin{lemma}\label{le:lvpsi1}
\begin{equation}\label{eq:lvpsi1}
L_\ol v\,\psi'= (-1)^p\ol\pt (\ol v \cup \psi).
\end{equation}
\end{lemma}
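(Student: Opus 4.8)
The plan is to obtain \eqref{eq:lvpsi1} from Cartan's magic formula together with the $\db$-closedness of $\psi$ on the total space. Since the identity is local, I would work in coordinates $(z^1,\dots,z^n,s)$ as above and regard $\psi$ as a form of type $(p,n-p)$ on $\cX$, namely $\psi=\psi_{A_p\ol B_{n-p}}\,dz^{A_p}\we dz^{\ol B_{n-p}}+\psi_{A_p\ol B_{n-p-1}\ol s}\,dz^{A_p}\we dz^{\ol B_{n-p-1}}\we d\ol s$, with no $ds$-component. The twist by $\cK_{\cX/S}$ plays no role, because a local frame is furnished by the relative top form $dz^1\we\dots\we dz^n$, which is $d$-closed and is annihilated by interior multiplication with the $(0,1)$-field $\ol v=\pt/\pt\ol s+\ol a^{\ol\alpha}\,\pt/\pt z^{\ol\alpha}$; hence it suffices to verify the identity for the underlying scalar-valued expression.

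Next I would expand $L_{\ol v}\psi=i_{\ol v}(d\psi)+d(i_{\ol v}\psi)$. By Proposition~\ref{pr:harmsec} the representative $\psi$ is $\db$-closed on $\cX$, so $d\psi=\pt\psi$ is of type $(p+1,n-p)$ and therefore $i_{\ol v}(\pt\psi)$ is of type $(p+1,n-p-1)$. Since $i_{\ol v}\psi$ is of type $(p,n-p-1)$, the term $d(i_{\ol v}\psi)=\pt(i_{\ol v}\psi)+\db(i_{\ol v}\psi)$ splits into a $(p+1,n-p-1)$-part and a $(p,n-p)$-part. Extracting from $L_{\ol v}\psi$ the component of type $(p,n-p)$ — which is precisely $L_{\ol v}\psi'$ — only $\db(i_{\ol v}\psi)$ survives, so $L_{\ol v}\psi'=\db(i_{\ol v}\psi)$. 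It then remains to relate the interior product $i_{\ol v}$ to the contraction $\ol v\cup\psi$ in the normalization used here, in which one antiholomorphic slot of $\psi$ is filled by $\ol v$ while the $p$ holomorphic differentials are kept in front: since $i_{\ol v}$ kills each of $dz^{\alpha_1},\dots,dz^{\alpha_p}$, the graded Leibniz rule for interior multiplication gives $i_{\ol v}\psi=(-1)^p\,(\ol v\cup\psi)$, whence $L_{\ol v}\psi'=(-1)^p\,\db(\ol v\cup\psi)$, which is \eqref{eq:lvpsi1}.

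The computation is essentially routine; as with \eqref{eq:2} and \eqref{eq:3}, it uses nothing about the horizontal-lift property of $v$, Cartan's formula being valid for any $(0,1)$-field projecting to $\pt/\pt\ol s$. The only delicate points are the Koszul-sign bookkeeping — pinning down that $i_{\ol v}\psi=(-1)^p(\ol v\cup\psi)$ in the chosen normalization of the contraction, and tracking the contribution of the $d\ol s$-component $\psi_{A_p\ol B_{n-p-1}\ol s}$ with the matching sign — and the verification that the local identity is independent of the chart and of the chosen extension of the trivialization of $\cK_{\cX/S}$ to $\cX$, so that it is intrinsic.
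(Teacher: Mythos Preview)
Your argument is correct. The paper does not actually include a proof of Lemma~\ref{le:lvpsi1}; like the companion identities \eqref{eq:2} and \eqref{eq:3}, it is presented as the outcome of a ``straightforward calculation'' in local coordinates. Your route via Cartan's formula $L_{\ol v}=i_{\ol v}\,d+d\,i_{\ol v}$, combined with the $\ol\pt$-closedness of $\psi$ on $\cX$ furnished by Proposition~\ref{pr:harmsec}, is a clean conceptual alternative to a brute-force coordinate check: the type decomposition on the total space immediately isolates $\ol\pt(i_{\ol v}\psi)$ as the sole $(p,n-p)$-component of $L_{\ol v}\psi$, and the factor $(-1)^p$ arises from passing $i_{\ol v}$ across $dz^{A_p}$. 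The points you single out as delicate --- matching $i_{\ol v}\psi$ to $\ol v\cup\psi$ with the correct Koszul sign, picking up the $d\ol s$-component of $\psi$ through $i_{\pt_{\ol s}}d\ol s=1$, and the innocuousness of the $\cK_{\cX/S}$-twist (since $L_{\ol v}dz^\alpha=0$ for the local frame) --- are precisely where the bookkeeping lives, and they all go through.
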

Let $L_\cX$ be the multiplication by $\omega_\cX$. Then
$$
\ii [\ol\pt, \pt] = -  L_\cX,
$$
and the analogous formula holds fiberwise.
The following equation holds on  $\cA^{(p,q)}(\cK_{\cX_s})$.
\begin{equation}\label{eq:boxdbox}
\Box_\pt = \Box_\ol\pt +  (n-p-q) \cdot id.
\end{equation}
In particular, the harmonic forms $\psi \in \cA^{(p,n-p)}(\cK_{\cX_s})$
are also harmonic with respect to $\pt$.

We are in a position to compute the curvature tensor.
Because of \eqref{eq:lvpsi1}
$$
\langle \psi^k, (L_\ol v \psi^\ell)'  \rangle = 0
$$
holds for all $s\in S$ so that by Proposition~\ref{pr:harmsec}
$$
\frac{\pt}{\pt s} H^{\ol\ell k} =
\langle (L_v \psi^k)', \psi^\ell \rangle.
$$
When using normal coordinates (of the second kind) at a given fixed point $s_0$,
the condition $(\pt/\pt s)H^{\ol\ell
k}|_{s_0}=0$ for all $k,\ell$ means that for $s=s_0$ the harmonic
projection
\begin{equation}\label{eq:HLv}
H((L_v \psi^k)') =0
\end{equation}
vanishes for all $k$.

In order to calculate second order derivatives of the metric tensor in an effective way, the order of taking derivatives will best be changed. This requires Lie derivative $L_{[v,\ol v]}$:
\begin{equation}\label{eq:Lolvv}
\pt_\ol s\pt_s \langle \psi^k,\psi^\ell \rangle = \langle L_{[\ol
v,v]}\psi^k , \psi^\ell\rangle -\langle L_\ol v\psi^k,L_\ol
v\psi^\ell\rangle + \langle L_v\psi^k,L_v\psi^\ell\rangle
\end{equation}
\begin{lemma}\label{le:vvb}
Restricted to the fibers $\cX_s$ the following equations hold for $L_{[\ol
v, v]}$ applied to $\cK_{\cX/S}$-valued functions and
differential forms resp.
    \begin{eqnarray}\label{eq:vvb}
      L_{[\ol v, v]} &=&
      \big[-\varphi^{;\alpha}\pt_\alpha + \varphi^{;\ol\beta}\pt_{\ol\beta},\; \mbox{\textvisiblespace} \;\big]
      - \varphi \cdot id\\
      \label{eq:lvvpsi}
\langle L_{[\ol v, v]} \psi^k , \psi^\ell  \rangle = - \langle \varphi\, \psi^k, \psi^\ell\rangle
&= &-  \int_{\cX_s} (\Box + 1)^{-1}(A_s \cdot A_\ol s) \psi^k \psi^\ol\ell \, g \, dV .
    \end{eqnarray}
\end{lemma}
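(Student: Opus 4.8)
The plan is to establish \eqref{eq:vvb} first by a direct computation of the bracket $[\ol v,v]$ of the horizontal lifts, and then to deduce \eqref{eq:lvvpsi} from \eqref{eq:vvb} together with Cartan-type formulas and the harmonicity of $\psi^k,\psi^\ell$; the stated closed form finally comes out of Proposition~\ref{pr:laplphi}. I would start from the horizontal lift $v=\pt/\pt s + a^\alpha\pt_\alpha$ with $a^\alpha=-g^{\ol\beta\alpha}g_{s\ol\beta}$ of Lemma~\ref{le:horli} and its conjugate $\ol v$. A short calculation shows that the $\pt/\pt s$- and $\pt/\pt\ol s$-components of $[\ol v,v]$ cancel, so $[\ol v,v]$ is tangent to the fibers $\cX_s$, with $(1,0)$-component $\ol v(a^\alpha)$ and $(0,1)$-component $-\overline{\ol v(a^\beta)}$.

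Restricting to a fiber and differentiating $a^\alpha=-g^{\ol\beta\alpha}g_{s\ol\beta}$ along $\ol v$, one commutes the resulting covariant derivatives and uses the \ke condition $-Ric(\omega_{\cX_s})=\omega_{\cX_s}$ (exactly as in the computation behind Lemma~\ref{le:harmks}) together with \eqref{eq:phi} to identify this fiber-tangent vector field with $-\varphi^{;\alpha}\pt_\alpha+\varphi^{;\ol\beta}\pt_{\ol\beta}$. The remaining zeroth-order term $-\varphi\cdot id$ originates from the action on the relative canonical factor: writing $L_{[\ol v,v]}=[L_{\ol v},L_v]$ and expressing it through the Chern connection of $(\cK_{\cX/S},h)$, the curvature of that connection — which by Proposition~\ref{re:omX} is $\omega_\cX$ — contributes $-\omega_\cX(u_s,\ol u_s)\cdot id=-\langle u_s,u_s\rangle_{\omega_\cX}\cdot id=-\varphi\cdot id$ by \eqref{eq:phi}. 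I expect this step — keeping track both of the fiber components of $[\ol v,v]$ and of the relative-canonical contribution — to be the \emph{main obstacle}, since it is precisely here that the \ke equation is used in an essential way.

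Granting \eqref{eq:vvb}, set $X:=-\varphi^{;\alpha}\pt_\alpha+\varphi^{;\ol\beta}\pt_{\ol\beta}=X^{1,0}+X^{0,1}$, so that
$$
\langle L_{[\ol v,v]}\psi^k,\psi^\ell\rangle=\langle L_{X^{1,0}}\psi^k,\psi^\ell\rangle+\langle L_{X^{0,1}}\psi^k,\psi^\ell\rangle-\langle\varphi\,\psi^k,\psi^\ell\rangle .
$$
By Proposition~\ref{pr:harmsec} the restriction $\psi^k|_{\cX_s}$ is $\db$-harmonic, hence by \eqref{eq:boxdbox} also $\pt$-harmonic (the forms lie in $\cA^{(p,n-p)}(\cK_{\cX_s})$, where the shift $n-p-q$ vanishes), so $\psi^k|_{\cX_s}$ is both $\pt$- and $\db$-closed; likewise $\psi^\ell|_{\cX_s}$ satisfies $\db^*\psi^\ell=\pt^*\psi^\ell=0$. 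Cartan's formula for the $(0,1)$-field $X^{0,1}$ and the operator $\db$ shows that the $(p,n-p)$-component of $L_{X^{0,1}}\psi^k$ equals $\db(\iota_{X^{0,1}}\psi^k)+\iota_{X^{0,1}}(\db\psi^k)=\db(\iota_{X^{0,1}}\psi^k)$, whence $\langle L_{X^{0,1}}\psi^k,\psi^\ell\rangle=\langle\iota_{X^{0,1}}\psi^k,\db^*\psi^\ell\rangle=0$; the same argument with $\pt$ and $X^{1,0}$ gives $\langle L_{X^{1,0}}\psi^k,\psi^\ell\rangle=\langle\iota_{X^{1,0}}\psi^k,\pt^*\psi^\ell\rangle=0$. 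Hence $\langle L_{[\ol v,v]}\psi^k,\psi^\ell\rangle=-\langle\varphi\,\psi^k,\psi^\ell\rangle$. Finally, Proposition~\ref{pr:laplphi} gives $(1+\Box_s)\varphi_{s\ol s}=\|A_s\|^2=A_s\cdot A_{\ol s}$, so $\varphi=(\Box+1)^{-1}(A_s\cdot A_{\ol s})$; substituting and writing the pointwise $L^2$-pairing in coordinates produces $-\int_{\cX_s}\big((\Box+1)^{-1}(A_s\cdot A_{\ol s})\big)\,\psi^k\psi^{\ol\ell}\,g\,dV$, as claimed.
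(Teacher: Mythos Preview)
Your plan is correct and matches the route taken in the detailed computations of \cite{sch-inv} to which the present paper implicitly defers (the lemma is only stated here, not proved). The ingredients you name---direct evaluation of $[\ol v,v]$ via Lemma~\ref{le:horli}, identification of its fiber components with $-\varphi^{;\alpha}\pt_\alpha+\varphi^{;\ol\beta}\pt_{\ol\beta}$ using the \ke condition and \eqref{eq:phi}, the Cartan-type argument together with the $\pt$- and $\db$-harmonicity of $\psi^k,\psi^\ell$ (via \eqref{eq:boxdbox}) to kill the first-order contribution, and finally Proposition~\ref{pr:laplphi} to rewrite $\varphi$ as $(\Box+1)^{-1}(A_s\cdot A_{\ol s})$---are exactly the ones used there.

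One small comment on your heuristic for the zeroth-order term. Writing ``$L_{[\ol v,v]}=[L_{\ol v},L_v]$ and expressing it through the Chern connection'' is a convenient way to anticipate the $-\varphi\cdot id$, but note that for the \emph{ordinary} Lie derivative of tensors on $\cX$ one has $[L_{\ol v},L_v]=L_{[\ol v,v]}$ identically, with no curvature correction. The extra $-\varphi$ really appears when one compares the Lie derivative of a section of $\Omega^p_{\cX/S}\otimes\cK_{\cX/S}$ on the total space with the intrinsic first-order operator $[W,\,\cdot\,]$ on the fiber; equivalently, it drops out of the explicit coordinate computation of $\ol v(a^\alpha)$ and the action on the relative canonical factor. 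You are right that this is the delicate step, and your expectation that the \ke equation enters essentially here is accurate.
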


The following identities are necessary
\begin{eqnarray}
\ol\pt(L_v\psi^k)'&=&  \pt(A_s\cup \psi^k)\label{eq:0} \\
  \ol\pt^*(L_v\psi^k)'&=& 0 \label{eq:4} \\
  \pt^*(A_s\cup\psi^k) &=&0 \label{eq:5} \\
 \ol\pt^* (L_\ol v\psi^k)'&=&  \pt^* (A_\ol s \cup \psi^k) \label{eq:6}\\
  \ol\pt(L_\ol v\psi^k)'&=& 0 \label{eq:1} \\
  \pt(A_\ol s \cup \psi^k) &=&0 \label{eq:7}
\end{eqnarray}
Now the proof of the curvature formula \eqref{eq:curvgen} (using normal coordinates) proceeds as follows: We continue with \eqref{eq:Lolvv} and apply \eqref{eq:lvvpsi}. Let $G_\pt$
and $G_\ol\pt$ denote the Green's operators on the spaces of
differentiable $\cK_{\cX_s}$-valued $(p,q)$-forms on the fibers with
respect to $\Box_\pt$ and $\Box_\ol\pt$ resp. We know from
\eqref{eq:boxdbox} that for $p+q=n$ the Green's operators $G_\pt$
and $G_\ol\pt$ coincide.

Since the harmonic projection $H ((L_v\psi^k)')=0$ vanishes for $s=s_0$, we have
\begin{gather*}
(L_v\psi^k)'= G_\ol\pt \Box_\ol\pt (L_v\psi^k)'= G_\ol\pt \ol\pt^*\ol\pt
(L_v\psi^k)' = \ol\pt^*G_\ol\pt \pt(A_s\cup\psi^k)
\end{gather*}
by \eqref{eq:4} and \eqref{eq:0}.  The form $\ol\pt(L_v\psi^k)'=
\pt(A_s\cup \psi^k)$ is of type $(p,n-p+1)$ so that by
\eqref{eq:boxdbox}  on the space of such forms $G_\ol\pt=(\Box_\pt
+1)^{-1} $ holds.

Now
\begin{gather*}
\hspace{-10mm}
\langle (L_v\psi^k)', (L_v\psi^\ell)'\rangle =\langle \ol\pt^*G_\ol\pt
\pt(A_s\cup\psi^k), (L_v\psi^\ell)' \rangle = \langle G_\ol\pt
\pt(A_s\cup\psi^k), \pt (A_s \cup \psi^\ell) \rangle \\ = \langle (\Box_\pt
+1)^{-1} \pt (A_s\cup \psi^k), \pt (A_s\cup \psi^\ell)\rangle
= \langle \pt^* (\Box_\pt +1)^{-1} \pt (A_s\cup \psi^k), A_s\cup
\psi^\ell\rangle.
\end{gather*}

Because of \eqref{eq:5}
\begin{gather*}
\langle (L_v\psi^k)', (L_v\psi^\ell)'\rangle = \langle (\Box_\pt
+1)^{-1}\Box_\pt (A_s \cup \psi^k) , A_s\cup \psi^\ell\rangle \\
\strut\hspace{10mm}=
\langle A_s\cup \psi^k, A_s \cup\psi^\ell\rangle -1 \langle (\Box+1)^{-1}(A_s \cup \psi_k), A_s \cup \psi^\ell\rangle.
\end{gather*}
(For $(p-1, n-p+1)$-forms, we write $\Box=\Box_\pt=\Box_\ol\pt$.)

Altogether we have
\begin{equation}\label{eq:part2}
\langle L_v \psi^k , L_v \psi^\ell \rangle|_{s_0} = - 1
\int_{\cX_s} (\Box +1)^{-1}(A_s\cup\psi^k)\cdot (A_\ol s\cup \psi^\ol\ell)\, g\, dV.
\end{equation}
We still need to compute $\langle L_\ol v\psi^k, L_\ol v \psi^\ell
\rangle$:

By equation \eqref{eq:3} we have  $(\langle L_\ol v\psi^k)'', (L_\ol v
\psi^\ell)'' \rangle = \langle A_\ol s \cup \psi^k   , A_\ol s \cup
\psi^\ell \rangle$. Now Lemma~\ref{le:lvpsi1} implies that the harmonic
projections of the $(L_\ol v \psi^k)'$ vanish for all parameters $s$. So
\begin{gather*}
\langle (L_\ol v \psi^k)',  (L_\ol v \psi^\ell)'\rangle = \langle G_{\ol\pt}
\Box_{\ol\pt} (L_\ol v \psi^k)',  (L_\ol v \psi^\ell)'\rangle
\underset{\eqref{eq:1}}{=} \langle
(G_\ol\pt \ol\pt {\ol\pt}^* L_\ol v \psi^k)', (L_\ol v \psi^\ell)'\rangle
= \\ \hspace{30mm} \langle (G_\ol\pt {\ol\pt}^* L_\ol v \psi^k)', \ol\pt^* (L_\ol v \psi^\ell)'\rangle
\underset{\eqref{eq:6}}{=} \langle
G_{\ol\pt} \pt^*(A_\ol s \cup \psi^k), \pt^*(A_\ol s\cup \psi^\ell) \rangle.
\end{gather*}
Now the $(p+1, n-p)$-form $\ol\pt^*(L_\ol v\psi^k)'=  \pt^* (A_\ol s \cup
\psi^k)$ is orthogonal to both of the spaces of $\ol\pt$- and $\pt$-harmonic
forms. On these, by \eqref{eq:boxdbox} we have
$$
\Box_\ol\pt=\Box_\pt - \cdot id.
$$
We see that all eigenvalues of $\Box_\pt$ are larger or equal to $1$ for
$(p,n-p-1)$-forms.

We have  from \eqref{eq:3}
$$
(\langle L_\ol v\psi^k)'', (L_\ol v
\psi^\ell)'' \rangle = \langle A_\ol s \cup \psi^k   , A_\ol s \cup
\psi^\ell \rangle.
$$
Now Lemma~\ref{le:lvpsi1} implies that the harmonic
projections of the $(L_\ol v \psi^k)'$ terms vanish for all parameters $s$. So
\begin{gather*}
\langle (L_{\ol v} \psi^k)',  (L_{\ol v} \psi^\ell)'\rangle = \langle G_{\ol\pt}
\Box_{\ol\pt} (L_{\ol v} \psi^k)',  (L_\ol v \psi^\ell)'\rangle \hspace{0cm}
\underset{\eqref{eq:1}}{=} \langle
(G_\ol\pt \ol\pt \ol\pt^* L_\ol v \psi^k)', (L_\ol v \psi^\ell)'\rangle \\ \hspace{5cm}
= \langle
(G_{\ol\pt} \ol\pt^* L_\ol v \psi^k)', \ol\pt^* (L_\ol v \psi^\ell)'\rangle
\underset{\eqref{eq:6}}{=}  \langle G_{\ol\pt} \pt^*(A_\ol s \cup \psi^k), \pt^*(A_{\ol s}\cup \psi^\ell) \rangle.
\end{gather*}
Now the $(p+1, n-p)$-form $\ol\pt^*(L_\ol v\psi^k)'=  \pt^* (A_\ol s \cup
\psi^k)$ is orthogonal to both the spaces of $\ol\pt$- and $\pt$-harmonic
forms. On these,  we have by \eqref{eq:boxdbox} that $\Box_\ol\pt=\Box_\pt - id$. We see that all eigenvalues of $\Box_\pt$ are larger or equal to $1$ for
$(p,n-p-1)$-forms.

One can see that in the eigenfunction decomposition of $A_\ol s\cup \psi^k$ only eigenfunctions for eigenvalues $0$ or larger that $1$ occur so that $(\Box - 1)^{-1}(A_\ol s\cup \psi^k)$ exists.

Now $ G_\ol\pt  \pt^*(A_\ol s \cup \psi^k) = (\Box_\pt -1)^{-1} \pt^*(A_\ol s \cup \psi^k)$, and \eqref{eq:7} implies
\begin{gather*}
\langle (L_\ol v \psi^k)',  (L_\ol v \psi^\ell)'\rangle = \langle
(\Box_\pt -1)^{-1} \Box_\pt (A_\ol s \cup \psi^k) ,A_\ol s \cup \psi^\ell
\rangle \\ \hspace{15mm} = \langle A_\ol s \cup \psi^k ,A_\ol s \cup \psi^\ell \rangle +
\langle (\Box_\pt -1)^{-1} (A_\ol s \cup \psi^k) ,A_\ol s \cup \psi^\ell
\rangle.
\end{gather*}
Equation \eqref{eq:3} yields the final equation (again with $\Box_\ol\pt =
\Box_\pt = \Box$ for $(p+1,n-p-1)$-forms)
\begin{equation}\label{eq:part3}
\langle L_\ol v \psi^k,  L_\ol v \psi^\ell\rangle =  \int_{\cX_s} (\Box - 1)^{-1}( A_\ol s \cup \psi^k)
\cdot (A_s \cup \psi^\ol\ell) \, g\, dV.
\end{equation}
The statement of Theorem~\ref{th:curvgen} follows from \eqref{eq:lvvpsi}, \eqref{eq:part2},
\eqref{eq:Lolvv}, and \eqref{eq:part3}. \qed

\section{Proof of Proposition~\ref{pr:formest}}\label{se:formest}
The proof is very elementary and given for the sake of completeness. In view of \eqref{eq:curv} it is obvious to use the following estimate.
\begin{lemma}\label{le:firstineq}
Let $p\in \mathbb N$. Then
$$
f(x)=x^{p+1} - x^p - x^2/2 + 1/2 \geq 0
$$
for all $ x \geq 0 $ .
\end{lemma}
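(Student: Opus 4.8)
The plan is to make visible the fact that $x=1$ is a double zero of $f$ by factoring, and then to check the sign of the remaining factor separately on $[0,1]$ and on $[1,\infty)$.

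First I would rewrite $f$. Since $x^{p+1}-x^p = x^p(x-1)$ and $-\frac{x^2}{2}+\frac12 = -\frac12(x-1)(x+1)$, one obtains
\[
f(x) = (x-1)\left(x^p - \frac{x+1}{2}\right).
\]
Write $g(x) := x^p - \frac{x+1}{2}$, and note $g(1)=0$, in agreement with $x=1$ being a zero of $f'$ as well. It therefore suffices to prove that $(x-1)\,g(x)\ge 0$ for every $x\ge 0$, i.e.\ that $g(x)$ and $x-1$ never have opposite signs.

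On $[1,\infty)$ I would use monotonicity: there $g'(x) = p\,x^{p-1}-\frac12 \ge p-\frac12 \ge \frac12>0$, so $g$ is increasing on $[1,\infty)$ and hence $g(x)\ge g(1)=0$; combined with $x-1\ge 0$ this gives $(x-1)g(x)\ge 0$. On $[0,1]$ I would argue directly: since $p\ge 1$ and $0\le x\le 1$ we have $x^p\le x$, and $x\le\frac{x+1}{2}$ because $x\le 1$; hence $x^p\le\frac{x+1}{2}$, i.e.\ $g(x)\le 0$, while $x-1\le 0$, so again $(x-1)g(x)\ge 0$. Putting the two cases together yields $f(x)\ge 0$ on all of $[0,\infty)$.

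There is no genuine obstacle here: the argument is entirely elementary, and the only points requiring any care are the factorization and the trivial chain of inequalities $x^p\le x\le\frac{x+1}{2}$ valid on $[0,1]$.
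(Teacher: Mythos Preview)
Your proof is correct. The factorization $f(x)=(x-1)\bigl(x^p-\tfrac{x+1}{2}\bigr)$ is valid, and the two-case analysis on $[0,1]$ and $[1,\infty)$ goes through exactly as you wrote. The paper itself does not supply a proof of this lemma, merely remarking that the content of the section is ``very elementary''; your argument fills in that gap cleanly and in the spirit intended.
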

\begin{lemma}\label{le:secineq}
Let $\alpha_p>0$ for $p=1,\ldots, n$. Then for all $x_p\geq 0$
\begin{eqnarray}
&&\sum^n_{p=2} (\alpha_p x^{p+1}_p - \alpha_{p-1}x^p_p) x^2_{p-1}\cdot\ldots\cdot x^2_1\nonumber\\
&&\strut\qquad\geq
 \frac{1}{2}\left(
- \frac{\alpha^3_1}{\alpha^2_2} x^2_1 +  \frac{\alpha^{n-1}_{n-1}}{\alpha^{n-2}_n} x^2_n \cdot\ldots\cdot x^2_1 +  \sum^{n-1}_{p=2} \Big(\frac{\alpha^{p-1}_{p-1}}{\alpha^{p-2}_p} -\frac{\alpha^{p+2}_p}{\alpha^{p+1}_{p+1}} \Big)x^2_p\cdot\ldots\cdot x^2_1\right)
\end{eqnarray}
holds.
\end{lemma}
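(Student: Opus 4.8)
The plan is to reduce Lemma~\ref{le:secineq} to a termwise application of Lemma~\ref{le:firstineq} after rescaling each variable $x_p$. Fix $p$ with $2\le p\le n$ and set $\lambda_p=\alpha_p/\alpha_{p-1}>0$, $y=\lambda_p x_p$. A one-line computation gives $y^{p+1}-y^p=(\lambda_p^p/\alpha_{p-1})(\alpha_p x_p^{p+1}-\alpha_{p-1}x_p^p)$, so that
$$
\alpha_p x_p^{p+1}-\alpha_{p-1}x_p^p=\frac{\alpha_{p-1}^{p+1}}{\alpha_p^p}\bigl(y^{p+1}-y^p\bigr),
$$
using $\alpha_{p-1}/\lambda_p^p=\alpha_{p-1}^{p+1}/\alpha_p^p$.

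Next I would invoke Lemma~\ref{le:firstineq} in the equivalent form $y^{p+1}-y^p\ge \tfrac12 y^2-\tfrac12$ (valid for all $y\ge 0$) and substitute back $y^2=(\alpha_p^2/\alpha_{p-1}^2)x_p^2$, obtaining for each $p$
$$
\alpha_p x_p^{p+1}-\alpha_{p-1}x_p^p\ \ge\ \frac12\Bigl(\frac{\alpha_{p-1}^{p-1}}{\alpha_p^{p-2}}\,x_p^2-\frac{\alpha_{p-1}^{p+1}}{\alpha_p^p}\Bigr).
$$
Multiplying this by the nonnegative factor $x_{p-1}^2\cdots x_1^2$ and summing over $p=2,\dots,n$ yields
$$
\sum_{p=2}^n(\alpha_p x_p^{p+1}-\alpha_{p-1}x_p^p)\,x_{p-1}^2\cdots x_1^2\ \ge\ \frac12\sum_{p=2}^n\Bigl(\frac{\alpha_{p-1}^{p-1}}{\alpha_p^{p-2}}\,x_p^2\cdots x_1^2-\frac{\alpha_{p-1}^{p+1}}{\alpha_p^p}\,x_{p-1}^2\cdots x_1^2\Bigr).
$$

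It then remains to rewrite the right-hand side in the asserted form. Abbreviating $P_k=x_k^2\cdots x_1^2$, the positive contributions are $(\alpha_{p-1}^{p-1}/\alpha_p^{p-2})\,P_p$ for $p=2,\dots,n$, while after the shift $q=p-1$ the negative contributions become $(\alpha_q^{q+2}/\alpha_{q+1}^{q+1})\,P_q$ for $q=1,\dots,n-1$. Splitting off the unmatched endpoints---the index $p=n$ on the positive side and $q=1$ on the negative side---and pairing up the remaining terms over the common range $2\le p\le n-1$ produces precisely the right-hand side of Lemma~\ref{le:secineq}. The argument is completely elementary; the only points that require attention are the exponent bookkeeping for the constant $\alpha_{p-1}^{p+1}/\alpha_p^p$ through the substitution and the re-indexing in this last step (together with the degenerate cases $n=1,2$, where the inner sum $\sum_{p=2}^{n-1}$ is empty and only the endpoint terms survive), so there is no genuine obstacle beyond careful accounting.
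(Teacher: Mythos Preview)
Your proof is correct and follows exactly the same route as the paper: for each $p$ you apply Lemma~\ref{le:firstineq} to $y=(\alpha_p/\alpha_{p-1})x_p$, multiply through by $\alpha_{p-1}^{p+1}/\alpha_p^p$ and then by $x_{p-1}^2\cdots x_1^2$, and sum---which is precisely what the paper's one-line proof ``Use $\frac{\alpha_{p-1}^{p+1}}{\alpha_p^p}f\bigl(\frac{\alpha_p}{\alpha_{p-1}}x_p\bigr)\ge 0$ and take a sum'' encodes. Your write-up simply unpacks that sentence, including the index shift needed to match the stated right-hand side.
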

\medskip
\begin{proof}
Use
$
\frac{\alpha^{p+1}_{p-1}}{\alpha^p_{p}}f\big(\frac{\alpha_{p}}{\alpha_{p-1}}x_p\big)\geq 0
$
from  Lemma~\ref{le:firstineq} and take a sum.
\end{proof}
\medskip

Now we set $x_p=G_p/G_{p-1}$ with and apply Lemma~\ref{le:secineq}  to the result of Lemma~\ref{le:Kest}.
\begin{equation}\label{eq:KHH}
H^2 K_H \leq \left(-c  + \frac{1}{2}\frac{\alpha^2_1}{\alpha^2_2} \right)\alpha_1 G^2_1 + \frac{1}{2} \sum^{n-1}_{p=2}\left(\frac{\alpha^{p+2}_p}{\alpha^{p+1}_{p+1}} - \frac{\alpha^{p-1}_{p-1}}{\alpha^{p-2}_{p}}  \right)G^2_p -\frac{1}{2} \frac{\alpha^{n-1}_{n-1}}{\alpha^{n-2}{n}}G^2_n =: -\sum^n_{p=1} \gamma_p G^2_p.
\end{equation}
\begin{lemma}
  There exists numbers $0<\alpha_< \ldots< \alpha_n$ such that all coefficients $\gamma_p$ are positive.
\end{lemma}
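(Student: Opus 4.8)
The plan is to exhibit an explicit geometric choice, $\alpha_p:=\lambda^{p}$ for a suitably large $\lambda>1$, and to verify that every $\gamma_p$ occurring in \eqref{eq:KHH} is then strictly positive. First I would isolate the three types of coefficients: comparing the two sides of \eqref{eq:KHH} one reads off $\gamma_1=\big(c-\tfrac12\,\alpha_1^2/\alpha_2^2\big)\alpha_1$, then $\gamma_p=\tfrac12\big(\alpha_{p-1}^{p-1}/\alpha_p^{p-2}-\alpha_p^{p+2}/\alpha_{p+1}^{p+1}\big)$ for $2\le p\le n-1$, and finally $\gamma_n=\tfrac12\,\alpha_{n-1}^{n-1}/\alpha_n^{n-2}$. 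The last coefficient is manifestly positive for any positive choice of the $\alpha_j$, so only $\gamma_1$ and the middle range need to be controlled; here $c>0$ is the curvature constant appearing in \eqref{eq:curv}, which by Theorem~\ref{th:curvgendual} may be taken equal to $2/\mathrm{vol}(X)$.

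For $\alpha_p=\lambda^{p}$ the exponents telescope, since $(p-1)^2-p(p-2)=1$ and $p(p+2)-(p+1)^2=-1$; hence $\alpha_{p-1}^{p-1}/\alpha_p^{p-2}=\lambda$ and $\alpha_p^{p+2}/\alpha_{p+1}^{p+1}=\lambda^{-1}$, so that $\gamma_p=\tfrac12(\lambda-\lambda^{-1})>0$ for every $p$ in the middle range as soon as $\lambda>1$, and likewise $\gamma_n=\tfrac12\lambda>0$. Finally $\gamma_1=(c-\tfrac1{2\lambda^2})\lambda$, which is positive precisely when $\lambda^2>1/(2c)$. Therefore choosing $\lambda>\max\{1,(2c)^{-1/2}\}$ makes all the $\gamma_p$ simultaneously positive and at the same time yields $0<\alpha_1<\alpha_2<\cdots<\alpha_n$, which is the assertion; moreover the resulting bound $H^2K_H\le-\sum_p\gamma_p G_p^2$ is uniform, since $\lambda$ depends only on the universal constant $c$.

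There is no genuine obstacle in this step; the only thing worth flagging is that one must know $c>0$, which is exactly the content of the $K_1$-estimate, and that the chosen $\lambda$ does not depend on the geometry of the fibers beyond that constant. If one prefers the inductive choice of the $\alpha_j$ alluded to in the text, the same conclusion follows by taking $\alpha_1=1$, then $\alpha_2$ with $\alpha_2^2>1/(2c)$ so that $\gamma_1>0$, and then, having fixed $\alpha_1,\dots,\alpha_p$, choosing $\alpha_{p+1}>\alpha_p$ large enough to force the single inequality $\alpha_{p+1}^{p+1}>\alpha_p^{2p}/\alpha_{p-1}^{p-1}$ equivalent to $\gamma_p>0$; since $\gamma_n$ is automatic, this again produces the desired $\alpha$'s.
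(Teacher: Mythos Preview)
Your proposal is correct. The paper's own proof is a single sentence --- ``We set $\alpha_1=1$ and choose the further values $\alpha_p$ inductively'' --- which is exactly the alternative you sketch in your final paragraph. Your primary argument, the explicit geometric choice $\alpha_p=\lambda^p$, is a pleasant refinement: the telescoping of exponents reduces every middle coefficient to $\tfrac12(\lambda-\lambda^{-1})$ uniformly, so the entire construction is governed by the single parameter $\lambda>\max\{1,(2c)^{-1/2}\}$ rather than an $n$-step recursion. This buys you a clean, explicit lower bound $\gamma_p\ge\tfrac12(\lambda-\lambda^{-1})$ for all $p\ge2$, which feeds directly into the constant $K$ of Proposition~\ref{pr:formest}, whereas the inductive choice in the paper leaves that constant implicit.
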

\begin{proof}
We set $\alpha_1=1$ and chose the further values $\alpha_p$ inductively.
\end{proof}
Let $\gamma= \min(\gamma_p)$ in \eqref{eq:KHH}. The the right-hand side of the inequality can be estimated from above by $K H^2 =-(\gamma/n) H^2$. This proves Proposition~\ref{pr:formest}. \qed

\end{document}